\providecommand{\tabularnewline}{\\}
\numberwithin{equation}{section}
\numberwithin{figure}{section}
\theoremstyle{plain}
\newtheorem{thm}{\protect\theoremname}
\theoremstyle{definition}
\newtheorem{defn}[thm]{\protect\definitionname}
\theoremstyle{definition}
\newtheorem{problem}[thm]{\protect\problemname}
\theoremstyle{remark}
\newtheorem{rem}[thm]{\protect\remarkname}
\theoremstyle{plain}
\newtheorem*{thm*}{\protect\theoremname}
\theoremstyle{definition}
\newtheorem{example}[thm]{\protect\examplename}
\theoremstyle{plain}
\newtheorem{prop}[thm]{\protect\propositionname}
\theoremstyle{plain}
\newtheorem{lem}[thm]{\protect\lemmaname}
\newenvironment{lyxlist}[1]
	{\begin{list}{}
		{\settowidth{\labelwidth}{#1}
		 \setlength{\leftmargin}{\labelwidth}
		 \addtolength{\leftmargin}{\labelsep}
		 }}
	{\end{list}}
\providecommand{\definitionname}{Definition}
\providecommand{\examplename}{Example}
\providecommand{\lemmaname}{Lemma}
\providecommand{\problemname}{Problem}
\providecommand{\propositionname}{Proposition}
\providecommand{\remarkname}{Remark}
\providecommand{\theoremname}{Theorem}
\begin{document}

\title[On some operators acting on line arrangements]{On some operators acting on line arrangements and their dynamics}

\addtolength{\textwidth}{0mm}
\addtolength{\hoffset}{-0mm} 
\addtolength{\textheight}{0mm}
\addtolength{\voffset}{-0mm} 

%\subjclass{Primary: 14J29} ; Secondary: 14G10, 14G15}

\global\long\def\CC{\mathbb{C}}%
 
\global\long\def\BB{\mathbb{B}}%
 
\global\long\def\PP{\mathbb{P}}%
 
\global\long\def\QQ{\mathbb{Q}}%
 
\global\long\def\RR{\mathbb{R}}%
 
\global\long\def\FF{\mathbb{F}}%

\global\long\def\DD{\mathbb{D}}%
 
\global\long\def\NN{\mathbb{N}}%
\global\long\def\ZZ{\mathbb{Z}}%
 
\global\long\def\HH{\mathbb{H}}%
 
\global\long\def\Gal{{\rm Gal}}%

\global\long\def\bA{\mathbf{A}}%

\global\long\def\kP{\mathfrak{P}}%
 
\global\long\def\kQ{\mathfrak{q}}%
 
\global\long\def\ka{\mathfrak{a}}%
\global\long\def\kP{\mathfrak{p}}%
\global\long\def\kn{\mathfrak{n}}%
\global\long\def\km{\mathfrak{m}}%

\global\long\def\cA{\mathfrak{\mathcal{A}}}%
\global\long\def\cB{\mathfrak{\mathcal{B}}}%
\global\long\def\cC{\mathfrak{\mathcal{C}}}%
\global\long\def\cD{\mathcal{D}}%
\global\long\def\cH{\mathcal{H}}%
\global\long\def\cK{\mathcal{K}}%

\global\long\def\cF{\mathcal{F}}%
 
\global\long\def\cI{\mathfrak{\mathcal{I}}}%
\global\long\def\cJ{\mathcal{J}}%

\global\long\def\cL{\mathcal{L}}%
\global\long\def\cM{\mathcal{M}}%
\global\long\def\cN{\mathcal{N}}%
\global\long\def\cO{\mathcal{O}}%
\global\long\def\cP{\mathcal{P}}%
\global\long\def\cS{\mathcal{S}}%
\global\long\def\cW{\mathcal{W}}%

\global\long\def\cQ{\mathcal{Q}}%
\global\long\def\kBS{\mathfrak{B}_{6}}%

\global\long\def\a{\alpha}%
 
\global\long\def\b{\beta}%
 
\global\long\def\d{\delta}%
 
\global\long\def\D{\Delta}%
 
\global\long\def\L{\Lambda}%
 
\global\long\def\g{\gamma}%
\global\long\def\om{\omega}%

\global\long\def\G{\Gamma}%
 
\global\long\def\d{\delta}%
 
\global\long\def\D{\Delta}%
 
\global\long\def\e{\varepsilon}%
 
\global\long\def\k{\kappa}%
 
\global\long\def\l{\lambda}%
 
\global\long\def\m{\mu}%

\global\long\def\o{\omega}%
 
\global\long\def\p{\pi}%
 
\global\long\def\P{\Pi}%
 
\global\long\def\s{\sigma}%

\global\long\def\S{\Sigma}%
 
\global\long\def\t{\theta}%
 
\global\long\def\T{\Theta}%
 
\global\long\def\f{\varphi}%
 
\global\long\def\ze{\zeta}%

\global\long\def\deg{{\rm deg}}%
 
\global\long\def\det{{\rm det}}%

\global\long\def\Dem{Proof: }%
 
\global\long\def\ker{{\rm Ker}}%
 
\global\long\def\im{{\rm Im}}%
 
\global\long\def\rk{{\rm rk}}%
 
\global\long\def\car{{\rm car}}%
\global\long\def\fix{{\rm Fix( }}%

\global\long\def\card{{\rm Card }}%
 
\global\long\def\codim{{\rm codim}}%
 
\global\long\def\coker{{\rm Coker}}%

\global\long\def\pgcd{{\rm pgcd}}%
 
\global\long\def\ppcm{{\rm ppcm}}%
 
\global\long\def\la{\langle}%
 
\global\long\def\ra{\rangle}%

\global\long\def\Alb{{\rm Alb}}%
 
\global\long\def\Jac{{\rm Jac}}%
 
\global\long\def\Disc{{\rm Disc}}%
 
\global\long\def\Tr{{\rm Tr}}%
 
\global\long\def\Nr{{\rm Nr}}%

\global\long\def\NS{{\rm NS}}%
 
\global\long\def\Pic{{\rm Pic}}%

\global\long\def\Km{{\rm Km}}%
\global\long\def\rk{{\rm rk}}%
\global\long\def\Hom{{\rm Hom}}%
 
\global\long\def\End{{\rm End}}%
 
\global\long\def\aut{{\rm Aut}}%
 
\global\long\def\SSm{{\rm S}}%

\global\long\def\psl{{\rm PSL}}%
 
\global\long\def\cu{{\rm (-2)}}%
 
\global\long\def\mod{{\rm \,mod\,}}%
 
\global\long\def\cros{{\rm Cross}}%
 
\global\long\def\nt{z_{o}}%

\global\long\def\co{\mathfrak{\mathcal{C}}_{0}}%
\global\long\def\ci{\mathfrak{\mathcal{C}}_{1}}%
\global\long\def\ldt{\Lambda_{\{2\},\{3\}}}%
 
\global\long\def\ltd{\Lambda_{\{3\},\{2\}}}%

\begin{comment}
envoi à

.

Marco Gola : action of operators on pseudo lines.

.

revues à essayer:

1) Geom. Dedicata : la plus valorisante, pas gagné, mais raisonnable
(canari < Dekimpe < van der Geer = Gelander= failure

2) Indagationes Mathematicae : revue ancienne, aim \& scope ok

3) Arnold Mathematical Journal =>Tabachnikov est editeur, la revue
est faite pour ce genre de papiers

.

Enseign. Math. 

.

IMRN

Journal of Symbolic Computation

Exper. Math. nouvelle formule=>Journal of the Association for Mathematical
Research.

Combinatorial Theory (Pilaud; mais revue très jeune)

Rendiconti del Seminario Matematico della Università di Padova

Documenta Mathematica 

.

y a t'il des sous-arrangement naturels de Ceva qui sont libres ?
\end{comment}

\subjclass[2000]{Primary: 14N20, 14H50, 37D40}
\author{Xavier Roulleau}
\begin{abstract}
We study some natural operators acting on configurations of points
and lines in the plane and remark that many interesting configurations
are fixed points for these operators. We review ancient and recent
results on line or point arrangements though the realm of these operators.
We study the first dynamical properties of the iteration of these
operators on some line arrangements. 
\end{abstract}

\maketitle
\tableofcontents{}

\section{Introduction}

Line arrangements is an active field of research which is attractive
from many view points: Topology (Zariski pairs), Algebra (freeness),
Combinatorics... For algebraic geometers, the results of Hirzebruch
in \cite{Hirz} have been the starting point for the search of peculiar
line arrangements in order to geometrically construct some ball quotient
surfaces, which are surfaces with Chern numbers satisfying the equality
in the Bogomolov-Miyaoka-Yau inequality $c_{1}^{2}\leq3c_{2}$. Arrangements
of lines appear also to be important for the Bounded Negativity Conjecture
on algebraic surfaces \cite{Duke},\,\cite{IMRN}. 

Motivated by the problem of constructing new interesting line arrangements,
we formalize here the definitions of some operators acting on lines
and points arrangements and we ask natural questions about their properties.
For integers $n,m$ $\geq2$, one may for example define the line
operator $\L_{n,m}$ as the function which to a given line arrangement
$L_{0}$ associate the union of the lines that contain at least $m$
points among the points in $L_{0}$ that have multiplicity at least
$n$. The idea of that definition comes naturally when considering
the definition of $(r_{k},s_{m})$-configurations (see Section \ref{subsec:The-(r,n)-configurations}).
These operators are probably known and used, however, to the knowledge
of the author, they are not studied for themselves and this is one
of our aims to (start to) do so and to convince the reader of their
importance. 

We will show that these operators appear quite naturally and ubiquitously
when studying line arrangements with remarkable properties, such as
reflexion line arrangements, simplicial arrangements, Zariski pairs
or Sylvester-Gallai line arrangements.

For example, a Zariski pair is a pair of plane curves $\cA,\cA'$
which have the same combinatoric, but have different topological properties,
which properties are often difficult to detect. We obtain that the
line operators are able to detect geometric differences between Zariski
pairs of line arrangements $\cA$ and $\cA'$ when the moduli space
of such $\cA,\cA'$ has dimension $\geq1$, but they do not see any
differences when their moduli space is $0$ dimensional. 

Some natural questions on these operators $\L$ arise, for example
the question to understand what are their fixed points i.e. if we
can classify the arrangements $\cA$ such that $\L(\cA)=\cA$. As
we explain in this paper, the answer is known for the $\L_{2,2}$-operator,
as a result of De Bruijn--Erdös. We give some examples of fixed points
of operators $\L$, but the question to classify them is quite open.
It is also an interesting question to construct line arrangements
$\cA$ such that $\cA$ and $\L(\cA)$ have the same combinatorics
but with $\L(\cA)\neq\cA$, so that $\L$ is a (rational) self-map
on the moduli space of these arrangements; we study such lines arrangements
on some cases. We also show that many results on line arrangements
may be formulated as results on these operators $\L$. 

Another natural question is to understand line arrangements for which
the action of an operator $\L$ diverges, i.e. for which the successive
images $(\cA_{k})_{k\geq0}$ by $\L$ of an arrangement $\cA_{0}$
acquire more and more lines. 

The other operators on line arrangements the author is aware of are
related to the pentagram map defined by Schwartz in \cite{SchwartzPent}.
To a polygon of $m$ lines i.e. a $m$-tuple of lines $(\ell_{1},\dots,\ell_{m})$,
the pentagram map associates the polygon $(\ell_{1}',\dots,\ell_{m}')$,
where $\ell_{k}'$ is the line containing the points $p_{k},p_{k+2}$,
for $p_{k}=\ell_{k}\cap\ell_{k+1}$ (the indices being taken mod $m$).
That creates a rational self-map on the space of polygons, which is
of interest in classical projective geometry, algebraic combinatorics,
moduli spaces, cluster algebras and integrable systems, see e.g. \cite{OST},
\cite{Berger}.

The structure of the paper is as follows: 

In Section \ref{sec:Definitions-of-the}, we define the line and point
operators and give some speculations on their properties and the dynamics
of their actions. The ring generated by these operators acts naturally
on the free module of line (respectively point) arrangements. We then
remark that the Hirzebruch type inequalities could be understood by
using these operators and factorizing through the Chow group of points
of the plane. We also review some classical theorems (Pappus, Pascal,
Desargues) and remark that they can be unified and formulated as statements
on the operator $\L_{\{2\},\{3\}}.$ 

In Section \ref{sec:Some-examples}, we study some examples of action
of operators on classical arrangements of lines (such as reflexion
arrangements), on simplicial arrangements, on some free arrangements,
and Zariski pairs. We also obtain an example of an arrangement of
81 lines with H-constant $<-3$, an interesting property for the Bounded
Negativity Conjecture. Although we do not claim for completeness,
these two first Sections may be also of interest for readers willing
to have a short survey on line arrangements and related problems.

The next Sections contain more original results: 

In Section \ref{sec:First-divergent-arrangements}, we explore the
divergence of some operators such as $\L_{2,3},$ $\L_{\{2\},\{3\}},$
$\L_{3,2}$ and $\L_{3,3}$ on some line arrangements. 

Section \ref{Sec:On-the-L=00007B2=00007D=00007B3=00007D-operator}
is devoted to the line operator $\ldt$. We discuss about two families
of arrangements of six lines, which we call the unassuming arrangements
and the flashing arrangements (we study in details the former in \cite{RoulleauDy}),
for which the action of $\L_{\{2\},\{3\}}$ is remarkable, recalling
some classical theorems of geometry. 

In Section \ref{sec:Flashing-arrangements-for}, we construct flashing
arrangements i.e. line arrangements $\cA$ such that $\L(\cA)\neq\cA$
but $\L(\L(\cA))=\cA$ for operators $\L\in\{\L_{\{2\},\{k\}},\,|\,k=3,4,5,6\}$. 

We conclude in Section \ref{sec:Matroids-and-line} by recalling some
notions used in this paper about matroids and by some remarks on realizable
rank $3$ matroids and the line operators $\L$. 

The computations involved use the Magma algebra software \cite{Magma};
one can find our algorithms in the ancillary file of the arXiv version
of this paper. 

\textbf{Acknowledgements} The author thanks Lukas Kühne for his explanations
on the realization of matroids and pointing out the paper \cite{ACKN},
Benoît Guerville-Ballé for instructive discussions on Zariski pairs,
and Jérémy Blanc, Alberto Calabri and Ciro Ciliberto for interesting
discussions related the Cremona group and arrangements in Section
\ref{subsec:Povera-arrangements}. The author is also grateful to
Piotr Pokora for his helpful comments on this paper, and his useful
observations about configurations and $2$-arrangements in Section
\ref{subsec:-arrangements}. Support from Centre Henri Lebesgue ANR-11-LABX-0020-01
is acknowledged.

\section{\label{sec:Definitions-of-the}Definitions and occurrences of the
operators, first questions}

\subsection{Notations and usual definitions}

Let $K$ be a field. We denote by $\PP^{2}$ (respectively $\check{\PP}^{2}$)
the set of points defined over $K$ of the projective plane (respectively
of the dual projective plane). 

For a set $E$, we denote by $\mathcal{P}(E)^{\text{F}}$ the set
of finite subsets of $E$. An element of $\mathcal{P}(\check{\PP}^{\text{2}})^{\text{F}}$
is called a \textit{line arrangement}, and an element of $\mathcal{P}(\PP^{\text{2}})^{\text{F}}$,
a \textit{point configuration}. An arrangement of lines will be identified
with the union of these lines in $\PP^{2}$. 

Let $L_{0}$ be an arrangement of lines. For $k\geq2$, a point $p$
is said \textit{multiple} with multiplicity $k$ for $L_{0}$, (or,
for short, a $k$\textit{-point} of $L_{0}$) if $p$ is at the intersection
of exactly $k$ lines in $L_{0}$. We denote by $t_{k}(L_{0})$ the
number of $k$-points of $L_{0}$.

Let $P_{0}$ be a point configuration. A line which contains exactly
$k$ points in $P_{0}$ is said $k$\textit{-rich}. 

The cardinal of a set $E$ is denoted by $|E|$.

When a line arrangement is known under the name of his inventor e.g
Wiman, we refer to it as the ``Wiman configuration'' rather than
the ``Wiman arrangement'', as is customary in the literature, and
especially if it is part of a $(r_{k},s_{m})$-configuration as defined
in Section \ref{subsec:The-(r,n)-configurations}. 

\subsection{\label{subsec:The-lines-points-oper}The lines and points operators
$\protect\L$ and $\Psi$}

For a subset $\kn\subset\NN\setminus\{0,1\}$ of integers larger or
equal than $2$, let us define the functions 
\[
\mathcal{L}_{\kn}:\mathcal{P}(\PP^{2})^{\text{F}}\to\mathcal{P}(\check{\PP}^{\text{2}})^{\text{F}}\text{ and }\mathcal{P}_{\km}:\mathcal{P}(\check{\PP}^{\text{2}})^{\text{F}}\to\mathcal{P}(\PP^{2})^{\text{F}}
\]
hereby: 

$\bullet$ To a configuration $P_{0}$ of points in the plane, let
$\mathcal{L}_{\kn}(P_{0})$ be the set of lines defined by
\[
\mathcal{L}_{\kn}(P_{0})=\{\ell\,|\,\,|\ell\cap P_{0}|\in\kn\}
\]
i.e. $\mathcal{L}_{\kn}(P_{0})$ is the set of lines $\ell$ in the
plane such that the number of points of $P_{0}$ contained in $\ell$
is in the set $\mathfrak{n}$.

$\bullet$ To an arrangement of lines $L_{0}$, let $\text{\ensuremath{\mathcal{P}}}_{n}(L_{0})$
be the set of points
\[
\text{\ensuremath{\mathcal{P}}}_{\mathfrak{n}}(L_{0})=\{p\,|\,\exists\,k\in\kn,\,\,p\text{ is a }k\text{-point of }L_{0}\}.
\]

Then for subsets $\kn,\km\subset\NN\setminus\{0,1\}$, let us define
the $(\kn,\km)$-\textit{line operator} and the $(\kn,\km)$-\textit{point
operator} 
\[
\Lambda_{\kn,\km}:\mathcal{P}(\check{\PP}^{\text{2}})^{\text{F}}\to\mathcal{P}(\check{\PP}^{\text{2}})^{\text{F}},\,\,\Psi_{\kn,\km}:\mathcal{P}(\PP^{\text{2}})^{\text{F}}\to\mathcal{P}(\PP^{2})^{\text{F}}
\]
by 
\[
\Lambda_{\kn,\km}(L_{0})=\mathcal{L}_{\km}(\cP_{\kn}(L_{0}))\text{ and }\Psi_{\kn,\km}(P_{0})=\cP_{\km}(\mathcal{L}_{\kn}(P_{0})).
\]
If $\mathfrak{n}=\mathfrak{m}$, we may simply write $\L_{\mathfrak{n}}=\L_{\mathfrak{n},\mathfrak{n}}$
and $\Psi_{\mathfrak{n},\mathfrak{n}}=\Psi_{\mathfrak{n}}$. For $n\in\NN,n\geq2$,
we will use the notation $\cL_{n},\cP_{n}$ for 
\[
\cL_{n}=\cL_{\{k\geq n\,|\,k\in\NN\}},\,\,\cP_{n}=\cP_{\{k\geq n\,|\,k\in\NN\}},
\]
and $\L_{n}=\mathcal{L}_{n}\circ\cP_{n}$, $\Psi_{n}=\cP_{n}\circ\mathcal{L}_{n}.$

For integers $n\geq2$, the operators $\cL_{\{n\}}$, $\cP_{\{n\}}$
are elementary, meaning that 
\[
\cL_{\mathfrak{n}}(P_{0})=\cup_{n\in\mathfrak{n}}\cL_{\{n\}}(P_{0}),\,\,\cP_{\mathfrak{n}}(L_{0})=\cup_{n\in\mathfrak{n}}\cP_{\{n\}}(L_{0}),
\]
for a point configuration $P_{0}$ or a line arrangement $L_{0}$;
moreover these unions are disjoint. We also have the relations 
\begin{equation}
\L_{\mathfrak{m},\mathfrak{n}}(L_{0})=\cup_{(m,n)\in\mathfrak{m}\times\mathfrak{n}}\L_{\{m\},\{n\}}(L_{0}).\label{eq:relations_lambda}
\end{equation}

Let $\L$ be a line operator and $L_{0}$ be a line arrangement.
We define inductively a sequence $(L_{n})_{n\in\NN}$ of lines arrangements,
called the $\L$-sequence associated to $L_{0}$, by the relation
\[
L_{n+1}=\L(L_{n}).
\]

\begin{defn}
We say that the $\L$-sequence $(L_{m})_{m}$ is \textit{$\L$-convergent}
to a line arrangement $L$ if $\exists M\in\NN$ such that $\forall m\geq M$,
$L_{m}=L$. We say that a sequence is \textit{extinguishing} if it
converges to $\emptyset$. If a sequence $(L_{m})_{m}$ is extinguishing,
its \textit{length} is the smallest integer $m$ such that $L_{m}=\emptyset$.
A line arrangement $L$ is $\L$\textit{-fixed} if $\L(L)=L$. The
operator $\L$ is said divergent for $L$ if the number of lines of
the associated $\L$-sequence diverges to infinity. 
\end{defn}

The following questions are then natural:
\begin{problem}
Under which conditions on $(\L,L_{0})$ is the $\L$-sequence $(L_{m})_{m}$
convergent ? Or periodic ? Or extinguishing ?\\
For $n\in\NN$, do there exists a $\L$-sequence of length $n$ ?
Do there exist sequences that are periodic with a non-trivial period
?\\
For a given arrangement $L_{0}$, there always exists a line operator
$\L$ such that $\L(L_{0})=\emptyset$. Do there always exists a operator
$\L$ such that the $\L$-sequence is convergent and non-extinguishing
? Such that $L_{0}$ is $\L$-fixed ?
\end{problem}

We give in Section \ref{sec:Some-examples} some examples which may
help to refine these questions.%
\begin{comment}
Can the action of these operators be extended to matroids ?
\end{comment}

\subsection{\label{subsec:The-dual-operators}The dual operators}

Fixing coordinates $x_{1},x_{2},x_{3}$ of $\PP^{2}$, one can define
the operator $\cD:\cP(\PP^{2})^{\text{F}}\to\cP(\check{\PP^{2}})^{\text{F}}$
sending a point $p=(u_{1}:u_{2}:u_{3})$ to the line (denoted by $^{t}p$)
defined by $u_{1}x_{1}+u_{2}x_{2}+u_{3}x_{3}=0$ and the operator
$\check{\cD}:\cP(\check{\PP^{2}})^{\text{F}}\to\cP(\PP^{2})^{\text{F}}$
sending the line $\ell=\{u_{1}x_{1}+u_{2}x_{2}+u_{3}x_{3}=0\}$ to
the point $[\ell]=(u_{1}:u_{2}:u_{3}).$ 

The arrangement $\cD(P_{0})$ (respectively $\check{\cD}(L_{0})$)
is called the dual of $P_{0}$ (respectively $L_{0}$). The operators
$\cD,\check{\cD}$ are inverse of each others: $\check{\cD}\cD=\text{Id}$,
$\cD\check{\cD}=\text{Id}.$ %
\begin{comment}
One then may wonder what are the operators 
\[
\check{\cD}\Psi\cD,\,\cD\L\check{\cD},\,\cD\cP,\,\cP\cD,\,\check{\cD}\cL,\,\cL\check{\cD}
\]
for operators $\Psi,\L,\cL,\cP$, for example are they expressible
as points or lines operators ? 

.
\begin{problem}
For operators $\Psi,\L,\cL,\cP$, one then may wonder what are the
operators 
\[
\cD\cP,\,\cP\cD,\,\check{\cD}\cL,\,\cL\check{\cD}
\]
acting on the moduli spaces of line or points arrangements up to projective
automorphism.
\end{problem}

\end{comment}

In practice we will forget the accent and write $\cD$ for both $\cD$
and $\check{\cD}$. Also the line and point operators act naturally
on both spaces $\cP(\check{\PP^{2}})^{\text{F}}$ and $\cP(\PP^{2})^{\text{F}}$
and we will take the same notations. By the properties of duality,
one has 
\[
\cD\circ\cP_{\mathfrak{n}}=\cL_{\mathfrak{n}}\circ\cD,\,\,\cL_{\mathfrak{n}}\circ\cD=\cD\circ\cP_{\mathfrak{n}},
\]
so that 
\[
\cD\circ\L_{\mathfrak{m},\mathfrak{n}}\circ\cD=\Psi_{\mathfrak{m},\mathfrak{n}},\,\,\cD\circ\Psi_{\mathfrak{m},\mathfrak{n}}\circ\cD=\L_{\mathfrak{m},\mathfrak{n}}.
\]
For a subset $\mathfrak{n}$ of integers $\geq2$, we denote by $\cD_{\mathfrak{n}}$
the operator 
\[
\cD_{\mathfrak{n}}=\cL_{\mathfrak{n}}\circ\cD,
\]
which to line arrangement $\cA$ associates the line arrangement in
the dual plane which is union of the lines containing exactly $n$
points in $\cD(\cA)$ for $n\in\mathfrak{n}$. It has the advantage
to send line arrangements to line arrangements and that for any two
subsets $\mathfrak{n},\mathfrak{m}$ of integers $\geq2$, the following
relation holds
\[
\cD_{\mathfrak{n}}\circ\cD_{\mathfrak{m}}=\L_{\mathfrak{m},\mathfrak{n}}.
\]
\begin{comment}
For a subset $\mathfrak{n}$ of integers $\geq2$, we denote by $\cD_{\mathfrak{n}}$
the operator 
\[
\cD_{\mathfrak{n}}=\cL_{\mathfrak{n}}\circ\cD.
\]
 It associate to a line arrangement $\cA$, the line arrangement in
the dual plane which is union of the lines containing exactly $n$
points in $\cD(\cA)$ for $n\in\mathfrak{n}$. For two subsets $\mathfrak{n},\mathfrak{m}$
of integers $\geq2$, we denote by $\cD_{\mathfrak{m},\mathfrak{n}}$
the operator
\[
\cD_{\mathfrak{m},\mathfrak{n}}=\cD_{\mathfrak{n}}\circ\cD_{\mathfrak{m}}
\]
which is an operator on line arrangements with same domain and codomain.

This is $\L_{\mathfrak{m},\mathfrak{n}}$ !!

Let $\co$ be an unassuming arrangement. We have the relations 
\[
\cD_{\{2\},\{5\}}(\co)=\co,\,\,\cD_{\{2\},\{3\}}(\co)=\cC_{1}=\cL_{\{2\},\{3\}}.
\]
Why is that true ? One has
\[
\begin{array}{c}
\cD_{\{2\},\{5\}}=\cL_{\{5\}}\circ\cD\circ\cL_{\{2\}}\circ\cD\end{array}
\]
\end{comment}

\subsection{\label{subsec:The-ring-operators}The ring of operators on the free
module of line arrangements}

Let $\text{Fr}_{0}(\check{\PP}^{2})=\ZZ{}^{(\cP(\check{\PP}^{2})^{\text{F}})}$
be the free module of line arrangements: an element of $\text{Fr}_{0}$
is a formal finite sum 
\[
\sum a_{L}[L]
\]
where $L$ runs over the arrangements of lines and $a_{L}$ is an
integer which is $0$ unless for a finite number of arrangements.
One can extend linearly the operators on $\text{Fr}_{0}(\check{\PP}^{2})$,
then adding the identity operator, one get the (non-commutative) ring
of operators $\ZZ[\L]$. 
\begin{problem}
Are there relations among the line operators in the ring $\ZZ[\L]$
?
\end{problem}

The set of operators $\L$ is uncountable. One might also add to that
ring the operators $\cD_{\mathfrak{n}}$ and ask the same question. 
\begin{rem}
A similar free module $\text{Fr}_{0}(\PP^{2})$ exists for points
configurations, with the natural ring of operators $\ZZ[\Psi]$. 
\end{rem}

\subsection{\label{subsec:Hirzebruch-inequality-and}Hirzebruch inequality and
operators}

Let $L_{0}$ be an arrangement of $d$ lines in the complex plane.
Let us recall that we denote by $t_{k}(L_{0})$, the number of $k$-points
of $L_{0}$. We recall that a line arrangement is called trivial of
all its lines go through the same point, and quasi-trivial if it is
the union of a trivial arrangement and a line containing only double
points. Hirzebruch inequality is 
\begin{thm}
\label{thm:(Hirzebruch-inequality-for}(Hirzebruch inequality for
lines over $\CC$, \cite[Section 3.1]{Hirz}). Assume that the line
arrangement $L_{0}$ is not trivial nor quasi-trivial, then
\[
t_{2}(L_{0})+t_{3}(L_{0})\geq d+\sum_{k\geq5}(k-4)t_{k}(L_{0}).
\]
\end{thm}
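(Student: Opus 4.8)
The plan is to derive Hirzebruch's inequality from the Bogomolov--Miyaoka--Yau (BMY) inequality applied to a suitable branched cover of the plane. This is the classical route, and since the statement concerns complex line arrangements with the characteristic constraints (non-trivial, non-quasi-trivial), it fits exactly into the setting where Hirzebruch's original argument operates. The key input from algebraic geometry is the BMY inequality $c_1^2 \leq 3c_2$ for surfaces of general type, or more precisely its refinement for surfaces with quotient singularities due to Miyaoka and Sakai.

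First I would form the Kummer-type covering $Y \to \PP^2$ branched along the $d$ lines of $L_0$, taking an abelian cover of exponent $n$ (for instance $n=5$, which Hirzebruch used to obtain the sharpest coefficients). Concretely, one pulls back the arrangement and takes the cover defined by adjoining $n$-th roots so that each line becomes a branch divisor of order $n$. The singular points of the arrangement, i.e. the $k$-points, dictate the singularities of the cover and contribute to the orbifold Euler characteristic and the orbifold self-intersection. The next step is to compute $c_1^2$ and $c_2$ (equivalently the orbifold Chern numbers, or the logarithmic Chern numbers $\bar{c}_1^2$ and $\bar{c}_2$ of the pair $(\PP^2, L_0)$) in terms of $d$ and the $t_k(L_0)$. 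Using $\sum_k \binom{k}{2} t_k = \binom{d}{2}$ and the standard formulas for the logarithmic invariants of a line arrangement, one expresses both Chern numbers as explicit linear combinations of $d$ and the $t_k$.

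Substituting these expressions into the orbifold BMY inequality $\bar{c}_1^2 \leq 3\bar{c}_2$ and simplifying yields, after clearing denominators and optimizing the covering exponent, precisely
\[
t_2(L_0) + t_3(L_0) \geq d + \sum_{k\geq 5}(k-4)\,t_k(L_0).
\]
The hypotheses that $L_0$ is neither trivial nor quasi-trivial are exactly what guarantees that the cover is of general type (or at worst that the pair has log-general-type, so that the orbifold BMY inequality is available and strict degeneracies are excluded); a near-pencil would make the cover fail to be of general type and the inequality could break down, which is why these cases are excluded by hypothesis.

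\textbf{The main obstacle} I expect is twofold. First, the careful bookkeeping of the local contributions of the $k$-points to the orbifold Chern numbers: each $k$-point contributes a quotient singularity on the cover whose type depends on $k$ and on the chosen exponent $n$, and one must resolve these and track their effect on $\bar{c}_1^2$ and $\bar{c}_2$ correctly. Second, verifying that the chosen covering exponent is admissible and that the resulting surface genuinely satisfies the hypotheses of the BMY-type inequality (general type, or log-canonical with the right positivity) precisely under the stated combinatorial hypotheses; this is where the exclusion of the trivial and quasi-trivial arrangements must be used. Since this is a cited result of Hirzebruch, I would at this level of detail defer the delicate singularity computation to the reference and present the derivation as an application of the orbifold BMY inequality, emphasizing the explicit form of the logarithmic Chern invariants of the arrangement.
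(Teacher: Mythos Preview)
The paper does not actually give a proof of this theorem: it is stated as a cited result from Hirzebruch's paper, and the only remark offered is that the inequality ``uses the Bogomolov--Miyaoka--Yau inequality for surfaces over $\CC$'' and is false in positive characteristic. Your outline is exactly the classical Hirzebruch argument (abelian Kummer cover of exponent $n$, computation of the Chern numbers of the cover in terms of $d$ and the $t_k$, application of BMY), so it is fully consistent with what the paper invokes; there is nothing further to compare.
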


With our notations, one has 
\[
t_{k}(L_{0})=|\mathcal{P}_{\{k\}}(L_{0})|.
\]
Looking at Theorem \ref{thm:(Hirzebruch-inequality-for}, it seems
therefore natural to associate to a line arrangement $L_{0}$, the
formal sum
\[
\begin{array}{cc}
S(L_{0}) & =\mathcal{P}_{\{2\}}(L_{0})+\mathcal{P}_{\{3\}}(L_{0})-\left(\sum_{k\geq5}(k-4)\mathcal{P}_{\{k\}}(L_{0})\right)-\check{\cD}(L_{0})\in\text{Fr}_{0}(\PP^{2})\\
 & =\mathcal{P}_{\{2\}}(L_{0})+\mathcal{P}_{\{3\}}(L_{0})-\left(\sum_{k\geq5}\cP_{k}(L_{0})\right)-\check{\cD}(L_{0}),\hfill
\end{array}
\]
and to consider the natural map 
\[
C:\text{Fr}_{0}(\PP^{2})\to\text{CH}_{0}(\PP^{2}),
\]
which to a point configuration $P\subset\PP^{2}$ associate the element
$\sum_{p\in P}p$ in the Chow group of $\PP^{2}$, and extended linearly
to any element of $\text{Fr}_{0}\PP^{2})$. To prove Hirzebruch inequality
is then equivalent to prove that $C(S(L_{0}))$ has non-negative degree.
Hirzebruch inequality is highly non-trivial, since it is false in
positive characteristic; it uses the Bogomolov-Miyaoka-Yau inequality
for surfaces over $\CC$. 

\begin{comment}
The sum $S(L_{0})$ can be interpreted as ideal of points with multiplicities.

One can associate the ideal sheaf 
\[
\cO(L_{0}):=\cO(p_{2}+p_{3}-\left(\sum_{k\geq5}(k-4)p_{k}\right)-\tilde{d}),
\]
where $p_{k}=\cP_{\{k\}}(L_{0})$ and $\tilde{d}=\check{\cD}(L_{0})$.
Can we derive some facts on $\cO(L_{0})$ from Hirzebruch inequalities
? Reider
\end{comment}
There are many other Hirzebruch-type inequalities for line arrangements
over $\CC$, (see e.g. \cite{PokoraH}).%
{} Over $\RR$, there is the Melchior inequality:
\begin{thm}
\label{thm:(Melchior)-Let-}(Melchior) Let $\cA$ be an arrangement
of $d\geq3$ real lines not in a pencil. Then 
\[
t_{2}(\cA)\geq3+\sum_{r\geq3}(r-3)t_{r}(\cA).
\]
\end{thm}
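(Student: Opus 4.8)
The plan is to realize the arrangement $\cA$ as a cell decomposition of the real projective plane $\PP^{2}(\RR)$ and to apply Euler's formula. Denote by $V$, $E$, $F$ the numbers of vertices, edges, and faces of this decomposition. The vertices are exactly the multiple points of $\cA$, so $V=\sum_{r\geq2}t_{r}(\cA)$. Each line of $\cA$ is a topological circle carrying as many marked points as it has multiple points on it, and such a circle is cut into that many arcs; summing over all lines and counting incidences point-by-point gives $E=\sum_{r\geq2}r\,t_{r}(\cA)$, since a point of multiplicity $r$ lies on $r$ lines (the non-pencil hypothesis together with $d\geq3$ guarantees every line carries at least one point, so this bookkeeping is honest). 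The faces are the connected components of the complement of $\cA$ in $\PP^{2}(\RR)$.

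First I would record Euler's formula $V-E+F=1$, valid since the Euler characteristic of $\PP^{2}(\RR)$ equals $1$. Next I would bound the number of faces: every edge lies on the boundary of exactly two faces, so the face--edge incidence count gives $\sum_{\text{faces }f}(\text{number of edges of }f)=2E$. The crucial geometric input is that, because $\cA$ is not a pencil, every face is bounded by at least three edges, whence $3F\leq2E$, i.e. $F\leq\tfrac{2}{3}E$.

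Combining $F=1-V+E$ with $F\leq\tfrac{2}{3}E$ yields $E\leq3V-3$. Substituting the expressions for $V$ and $E$ gives $\sum_{r\geq2}r\,t_{r}(\cA)\leq3\sum_{r\geq2}t_{r}(\cA)-3$, that is $\sum_{r\geq2}(r-3)t_{r}(\cA)\leq-3$. Isolating the term $r=2$, which contributes $-t_{2}(\cA)$, this rearranges precisely to $t_{2}(\cA)\geq3+\sum_{r\geq3}(r-3)t_{r}(\cA)$, as claimed.

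The main obstacle is justifying the face inequality $3F\leq2E$, i.e. that no face is a bigon, and this is exactly where the hypothesis that $\cA$ is not a pencil is indispensable. A bigon would be bounded by two edges joining the same pair of distinct vertices; if these edges lay on two distinct lines, those lines would meet in two points, contradicting that two distinct lines of $\PP^{2}(\RR)$ meet in exactly one point, while if they lay on a single line the face could not be separated off, since a single projective line does not disconnect $\PP^{2}(\RR)$. I would also verify that for a pencil the argument genuinely breaks down --- there the faces are bigons and $3F\leq2E$ fails (one computes $V=1$, $E=d$, $F=d$) --- which confirms that the hypothesis cannot be dropped.
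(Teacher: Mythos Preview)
Your argument is the classical proof of Melchior's inequality via Euler's formula on $\PP^{2}(\RR)$, and it is correct. The paper itself does not supply a proof of this theorem: it merely records the statement, remarks that ``the proof of Melchior inequality is combinatorial,'' and refers the reader to \cite{PokoraH}. What you have written is precisely the standard combinatorial argument that such a reference would contain, so there is nothing to compare against within the paper.

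One small point you might tighten in the bigon discussion: besides the case of two distinct edges sharing both endpoints, one should also exclude the possibility that the boundary walk of a face traverses a \emph{single} edge $e$ twice (i.e.\ $e$ borders the same face on both sides). The clean way to handle all cases at once is to note that at every vertex $p$ of the boundary walk the local sector of the face is wedged between two \emph{distinct} lines through $p$ (since $p$ has multiplicity $\geq2$), so consecutive edges of the walk always lie on different lines; a walk of length $\leq2$ would then force two distinct lines to meet in two distinct points. Your remark that ``a single projective line does not disconnect $\PP^{2}(\RR)$'' points in the right direction but does not by itself close this case, because the face in question is bounded by the whole arrangement, not by a single line. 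This is a cosmetic refinement; the proof is sound.
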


It proves in particular that an arrangement of real lines not in a
pencil have always some double points. The proof of Melchior inequality
is combinatorial (see \cite{PokoraH}).

\subsection{\label{subsec:Classical-Theorems}Classical Theorems (Pappus, Pascal,
Desargues...)}

In this section we remark that many classical Theorems of geometry
can be formulated as statements about the operator $\ldt$. In Section
\ref{subsec:Povera-arrangements}, we will give some new results of
classical flavor about that operator, for some arrangements of six
lines. 

Let us recall some vocabulary:
\begin{defn}
An \textit{hexagon} $H$ is the data of a $6$-tuple $(\ell_{1},\dots,\ell_{6})$
of distinct lines and a $6$-tuple $P_{6}=(p_{1},\dots,p_{6})$ of
double points (called the vertices) on the arrangement $H$, such
that $p_{i}$ is the intersection point of $\ell_{i}$ and $\ell_{i+1}$
(the indices being taken mod $6$). The \textit{three diagonals} of
an hexagon are the three lines through the pairs of points $p_{i},p_{i+3}$.
The \textit{opposite sides} of $H$ are the three pairs of lines with
no common points in the set $\{p_{1},\dots,p_{6}\}$. 
\end{defn}

\subsubsection{\label{subsec:Pascal's-Hexagon-Theorem}Pascal's Hexagon Theorem}

Pascal's Hexagon Theorem states that if $H$ is a hexagon such that
its set of vertices $P_{6}$ is contained in an irreducible conic,
then the three intersection points of the three pairs of opposite
sides of $H$ are contained in a line (see Figure \ref{fig:Pascal}).
In other words and using our notation, the arrangement $\ldt(H)$
is one line when the vertices are generic on a conic. That line is
called the Pascal line of the hexagon $H$.

Since one can form $60$ hexagons with vertices in $P_{6}$, there
are $60$ Pascal lines (when $P_{6}$ is generic): that result is
known as the hexagrammum mysticum Theorem. Let $L_{15}=\cP_{\{2\}}(P_{6})$
be the union of the $15$ lines containing two points in $P_{6}$.
Using our notations, the hexagrammum mysticum Theorem means that the
line arrangement 
\[
\L_{\{2\},\{3\}}(L_{15})
\]
is the union of $60$ lines. If $P_{6}$ is the set of the vertices
of the regular hexagon, $\L_{\{2\},\{3\}}(L_{15})$ has $67$ lines;
on random examples, we obtained $60$ lines. 

\begin{figure}[h]
\begin{center}
\begin{tikzpicture}[scale=0.45]

\clip(-4.3,-4.9) rectangle (13.76,3.8);
\draw [rotate around={-9.915526665510901:(3.97,-0.5)},line width=0.2mm] (3.97,-0.5) ellipse (5.388804073211836cm and 3.6667027885369263cm);
\draw [line width=0.2mm,color=blue] (0.858637516031961,2.793676660156259)-- (2.866982604398994,-4.0146097577290725);
\draw [line width=0.2mm,color=blue] (4.996506913938421,3.032444764084481)-- (-0.28436746126893997,-2.344279866038222);
\draw [line width=0.2mm,color=blue] (4.996506913938421,3.032444764084481)-- (6.8537506847184915,-3.8792411213668414);
\draw [line width=0.2mm,color=blue] (2.866982604398994,-4.0146097577290725)-- (8.550987827690065,0.9810469538142258);
\draw [line width=0.2mm,color=blue] (8.550987827690065,0.9810469538142258)-- (-0.28436746126893997,-2.344279866038222);
\draw [line width=0.2mm,color=blue] (0.858637516031961,2.793676660156259)-- (6.8537506847184915,-3.8792411213668414);
\draw [line width=0.2mm,color=red,domain=-4.3:13.76] plot(\x,{(--0.4241016179003205-0.8916103796566817*\x)/4.361311165676245});
\begin{scriptsize}
\draw [fill=black] (0.858637516031961,2.793676660156259) circle (1mm);
\draw[color=black] (1.,3.17) node {$p_1$};
\draw [fill=black] (-0.28436746126893997,-2.344279866038222) circle (1mm);
\draw[color=black] (-0.4,-2.9) node {$p_4$};
\draw [fill=black] (2.866982604398994,-4.0146097577290725) circle (1mm);
\draw[color=black] (3.,-4.65) node {$p_2$};
\draw [fill=black] (4.996506913938421,3.032444764084481) circle (1mm);
\draw[color=black] (5.14,3.41) node {$p_5$};
\draw [fill=black] (8.550987827690065,0.9810469538142258) circle (1mm);
\draw[color=black] (8.7,1.35) node {$p_3$};
\draw [fill=black] (6.8537506847184915,-3.8792411213668414) circle (1mm);
\draw[color=black] (7.,-4.51) node {$p_6$};
\draw [fill=black] (1.7601963617314205,-0.2626062862624834) circle (0.2mm);
\draw [fill=black] (6.121507527407665,-1.154216665919165) circle (0.2mm);
\draw[color=red] (12,-1.6) node {$\Lambda_{\{2\},\{3\}}(H)$};
\draw [fill=black] (4.019432400937969,-0.7244761749565344) circle (0.2mm);
\draw [fill=red] (4.019432400937969,-0.7244761749565344) circle (1.5mm);
\draw [fill=red] (6.121507527407665,-1.154216665919165) circle (1.5mm);
\draw [fill=red] (1.7601963617314205,-0.2626062862624834) circle (1.5mm); 
\end{scriptsize}
\end{tikzpicture}
\end{center} 

\caption{\label{fig:Pascal}Pascal's Hexagon Theorem}
\end{figure}

\subsubsection{\label{subsec:Pappus'-and-Steiner's}Pappus' and Steiner's Theorems}

Pappus' Theorem can be derived from Pascal's Hexagon Theorem by degeneration
of the conic into two lines $\ell_{1},\ell_{2}$. Let $p_{1},p_{2},p_{3}$
(resp. $p_{4},p_{5},p_{6}$) be three points on $\ell_{1}$ (resp.
$\ell_{2}$). Pappus Theorem states that if $H=\{\ell_{3},\dots,\ell_{8}\}$
is an hexagon formed by joining lines though $P_{6}=\{p_{1},\dots,p_{6}\}$,
then there exists a line $\ell_{9}$ (the Pappus line of $H$) such
that $\ldt(H)=\{\ell_{1},\ell_{2},\ell_{9}\}$. When generic, the
Pappus configuration $\{\ell_{1},\dots,\ell_{9}\}$ is a $9_{3}$-configuration:
it has $9$ triple points, each line contains three of these points.
There are six hexagons $H$: the line arrangement $L_{6}=\ldt(\cL_{\{2\}}(P_{6}))$
is the union of the $6$ Pappus lines.

Then Steiner's Theorem says that these six lines are union of two
sets of three lines, each sets of three lines meeting in one point,
and that the operation $\psi_{\{2\},\{3\}}\circ\cP_{\{2\}}$ dual
to $\ldt\circ\cL_{\{2\}}$ applied to $L_{6}$:
\[
P_{6}'=\psi_{\{2\},\{3\}}(\cP_{\{2\}}(L_{6}))=\psi_{\{2\},\{3\}}\circ\cP_{\{2\}}\circ\ldt\circ\cL_{\{2\}}(P_{6})
\]
is again a set $P_{6}'=\{p'_{1},\dots,p'_{6}\}$ of $6$ points, with
$p_{1}',p_{2}',p_{3}'$ (resp. $p_{4}',p_{5}',p_{6}$') on $\ell_{1}$
(resp. $\ell_{2}$). Moreover, the points $p_{1}',p_{2}',p_{3}'$
depend only on the intersection point of $\ell_{1}$ and $\ell_{2}$.
The dynamics of the map $\{p_{1},p_{2},p_{3}\}\to\{p_{1}',p_{2}',p_{3}'\}$
has been studied in \cite{Hooper} (the author thanks Richard Schwartz
for pointing out that reference). We remark that
\[
\psi_{\{2\},\{3\}}\circ\cP_{\{2\}}\circ\ldt\circ\cL_{\{2\}}=\psi_{\{2\},\{3\}}\circ\psi_{\{3\},\{2\}}\circ\psi_{\{2\},\{2\}}.
\]
Adopting the dual view point, Steiner's Theorem tells that the operator
\[
\L_{\{2\},\{3\}}\circ\L_{\{3\},\{2\}}\circ\L_{\{2\},\{2\}}
\]
is a rational self map on the space of six lines concurrent in threes. 

\subsubsection{\label{subsec:Desargues'-Theorem}Desargues' Theorem}

Let $\ell_{1},\ell_{2},\ell_{3}$ be three lines meeting at the same
point, let $\ell_{1}',\ell_{2}',\ell_{3}'$, (resp. $\ell_{1}'',\ell_{2}'',\ell_{3}''$)
be lines such that their three intersection points are on lines $\ell_{1},\ell_{2},\ell_{3}$
(see Figure \ref{fig:Desargues-configuration}). Let $\cL_{9}$ be
the union of the nine lines $\ell_{1},\dots,\ell_{3}''$. Desargues'
Theorem tells that $\ldt(\cL_{9})$ is one line. 

\begin{figure}[h]
\begin{center}
\begin{tikzpicture}[scale=0.23]
\clip(-11.46,-15.2) rectangle (16.585,9.02125);
\draw [line width=0.2mm,color=blue,domain=-10.46:16.585] plot(\x,{(--8.--4.*\x)/8.});
\draw [line width=0.2mm,color=blue,domain=-10.46:16.585] plot(\x,{(-0.-0.*\x)/6.});
\draw [line width=0.2mm,color=blue,domain=-10.46:16.585] plot(\x,{(-6.6175-3.30875*\x)/8.055});
\draw [line width=0.2mm,domain=-10.46:16.585] plot(\x,{(--16.-4.*\x)/-2.});
\draw [line width=0.2mm,domain=-10.46:16.585] plot(\x,{(--13.235-3.30875*\x)/2.055});
%\draw [line width=0.2mm,domain=-10.46:16.585] plot(\x,{(-44.0725--7.30875*\x)/-0.055});

\draw [line width=0.2mm] (6.1,-16.22375) -- (6.,9.02125);
\draw [line width=0.2mm,color=green,domain=-10.46:16.585] plot(\x,{(--0.75101-1.44425*\x)/-0.3685});
\draw [line width=0.2mm,color=green,domain=-10.46:16.585] plot(\x,{(--4.3981290625-3.098625*\x)/1.139});
\draw [line width=0.2mm,color=green,domain=-10.46:16.585] plot(\x,{(-0.860275--1.654375*\x)/-1.5075});
\draw [line width=0.2mm,color=red,domain=-10.46:16.585] plot(\x,{(-126.85289683417939--1.4072456639447122*\x)/9.232849243353375});
\begin{scriptsize}
\draw [fill=blue] (-2.,0.) circle (2mm);
\draw [fill=black] (6.,4.) circle (2mm);
\draw[color=blue] (-9.83,-3.00125) node {$\ell_1$};
\draw [fill=black] (4.,0.) circle (2mm);
\draw[color=blue] (-9.83,-0.81625) node {$\ell_2$};
\draw [fill=black] (6.055,-3.30875) circle (2mm);
\draw[color=blue] (-9.83,2.56875) node {$\ell_3$};
\draw[color=black] (9,8.32375) node {$\ell_3'$};
\draw[color=black] (-0.205,8.32375) node {$\ell_2 '$};
\draw[color=black] (5.205,8.32375) node {$\ell_1 '$};
\draw [fill=green] (0.8885,1.44425) circle (2mm);
\draw [fill=green] (0.52,0.) circle (2mm);
\draw[color=green] (3.58,8.32375) node {$\ell_1''$};
\draw [fill=green] (2.0275,-1.654375) circle (2mm);
\draw[color=green] (-2.5,8.32375) node {$\ell_2''$};
\draw[color=green] (-6.,8.32375) node {$\ell_3''$};
\draw [fill=red] (-3.1063838812301174,-14.212767762460235) circle (3mm);
\draw [fill=red] (6.126465362123257,-12.805522098515523) circle (3mm);
\draw[color=red] (-7,-13.5) node {$\Lambda_{\{2\},\{3\}}(\mathcal{L}_9)$};
\draw [fill=red] (11.449374999999998,-11.99421875) circle (3mm);
\end{scriptsize}

\end{tikzpicture}
\end{center} 
%\end{figure}

\caption{\label{fig:Desargues-configuration}Desargues configuration}
\end{figure}

The union $\cL_{10}$ of $\cL_{9}$ and the Desargues line $\ldt(\cL_{9})$
is a $10_{3}$-configuration: $\cL_{10}$ has $10$ triple points
and $10$ lines such that each line contains $3$ triple points. In
fact $\cL_{10}=\L_{2,3}(\cL_{9})$; moreover $\L_{3}(\cL_{10})=\cL_{10}$. 

\subsubsection{\label{subsec:Brianchon's-Theorem}Brianchon's Theorem}

Brianchon's Theorem is the dual of Pascal's Theorem, it states that
when a hexagon $H$ with vertices $P_{6}$ is circumscribed around
a conic, then the three diagonals of the hexagon meet in a single
point, in other words: $\Psi_{\{2\},\{3\}}(P_{6})$ is a point. 

The dual of Pascal's hexagrammum mysticum Theorem may be formulated
by saying that the point arrangement
\[
\cP_{\{3\}}(\L_{\{2\},\{2\}}(H))=\Psi_{\{2\},\{3\}}(\cP_{2}(H))
\]
is a set of (at least) $60$ points.

\subsection{\label{subsec:Green-and-Tao}Green and Tao results }

In \cite{GT}, Green and Tao obtain the following results:
\begin{thm}
(Orchard problem ; \cite[Theorem 1.3]{GT} of Green-Tao). Suppose
that $\cP$ is a finite set of $n$ points in the real plane. Suppose
that $n\geq n_{0}$ for some sufficiently large absolute constant
$n_{0}$. Then there are no more than $\left\lfloor n(n-3)/6\right\rfloor +1$
lines that are $3$-rich, that is they contain precisely $3$ points
of $\cP$.
\end{thm}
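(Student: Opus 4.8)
Write $N_k:=|\cL_{\{k\}}(\cP)|$ for the number of $k$-rich lines of $\cP$, so that the quantity to bound is $N_3$. The natural starting point is the identity obtained by counting incidences of pairs: each of the $\binom{n}{2}$ pairs of points spans a unique line, and a $k$-rich line accounts for exactly $\binom{k}{2}$ of them, whence
\[
N_2 + 3N_3 + \sum_{k\geq 4}\binom{k}{2}N_k = \binom{n}{2}.
\]
Rearranging, the desired bound $N_3\leq \lfloor n(n-3)/6\rfloor+1$ is, up to the floor and additive constant, equivalent to the lower bound
\[
N_2 + \sum_{k\geq 4}\binom{k}{2}N_k \;\geq\; n-3 .
\]
So the whole game is to guarantee enough ordinary lines, weighted by the richer ones.

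\textbf{A non-sharp bound from Melchior.} I may assume $\cP$ is not collinear, the collinear case being trivial. Passing to the dual line arrangement $\cD(\cP)$, whose $k$-points are precisely the $k$-rich lines of $\cP$, and applying Melchior's inequality (Theorem~\ref{thm:(Melchior)-Let-}) gives $N_2 \geq 3 + \sum_{r\geq 4}(r-3)N_r$. Feeding this into the identity yields
\[
3N_3 \leq \binom{n}{2} - 3 - \sum_{k\geq 4}\left[(k-3)+\binom{k}{2}\right]N_k \leq \binom{n}{2}-3,
\]
hence $N_3\leq (n-3)(n+2)/6$. This already captures the leading term $n^2/6$, but it overshoots the target by $(n-6)/3$. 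The loss occurs exactly for point sets whose dual makes Melchior nearly an equality, i.e. sets with very few ordinary lines; recovering the missing $\Theta(n)$ is the real content of the theorem.

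\textbf{Structure and reduction to a cubic.} To close the gap I would argue by contradiction: if $N_3 > \lfloor n(n-3)/6\rfloor+1$ then the displayed inequality fails, forcing $N_2+\sum_{k\geq4}\binom{k}{2}N_k < n-3$, so in particular $\cP$ has fewer than $n$ ordinary lines and essentially no richer lines. For $n$ large this is precisely the hypothesis of the Green--Tao structure theorem for sets with $O(n)$ ordinary lines, which, after removing $O(1)$ exceptional points, places $\cP$ on an irreducible cubic, on a conic together with a line, or on a line with extra points; the degenerate cases carry far fewer than $n^2/6$ triple points and are dispatched directly. On an irreducible cubic $C$, Bézout forbids $4$-rich lines, so $\sum_{k\geq4}\binom{k}{2}N_k=0$ and $N_3$ equals the number of collinear triples. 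Fixing a real flex as origin of the group law on $C(\RR)$, three points are collinear exactly when they sum to a fixed constant $c$, so $N_3$ becomes the number of distinct unordered triples $\{x,y,z\}\subset\cP$ with $x+y+z=c$ in the abelian group $C(\RR)$.

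\textbf{The additive count and the main obstacle.} What remains is the purely additive extremal problem: maximise, over $n$-element subsets of an abelian group, the number of distinct triples summing to a fixed $c$. The extremiser is a coset $g+H$ of a finite subgroup with $3g=c$; then $z=c-x-y$ always returns to $\cP$, the number of ordered solutions is exactly $n^2$, and subtracting the degenerate solutions (the tangent directions, governed by the $3$-torsion) and dividing by $6$ gives $(n^2-3n+2\tau)/6$, where $\tau\in\{1,3\}$ counts the real $3$-torsion. Matching $\tau$ against $n\bmod 3$ reproduces $\lfloor n(n-3)/6\rfloor+1$ exactly, and shows the bound is attained. I expect the difficulty to sit in two places. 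The deepest is the structure theorem itself: reducing a few-ordinary-lines hypothesis to concentration on a cubic is the technical heart of this circle of ideas and is in no way formal. The second is the bookkeeping around the $O(1)$ removed points and the degenerate cubics (nodal and cuspidal curves, and the conic-plus-line case), where the group law degenerates and one must verify that adding or deleting a bounded number of points cannot push $N_3$ past the claimed value; this is precisely where the floor and the additive constant $+1$ are won or lost.
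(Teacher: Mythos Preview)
The paper does not actually prove this theorem: it is quoted verbatim as \cite[Theorem 1.3]{GT} and then merely rephrased in the operator language $|\cL_{\{3\}}(\cP)|\leq\lfloor n(n-3)/6\rfloor+1$. There is no proof in the paper to compare against.

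What you have written is a faithful high-level outline of Green and Tao's own argument: the pair-counting identity, the non-sharp bound from Melchior, the reduction via the structure theorem for sets with $O(n)$ ordinary lines to a cubic curve, and the group-law count of collinear triples. As a sketch it is accurate, and you correctly identify where the real work lies: the structure theorem is the heart of \cite{GT} and occupies most of that paper, and the bookkeeping for the $O(1)$ exceptional points and degenerate cubics is genuinely delicate. But none of this is carried out here, nor could it reasonably be in a few paragraphs; so your proposal is not a proof but an annotated table of contents of one. For the purposes of this paper, which only cites the result, that is more than is needed.
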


For a line arrangement $\cC$, let us denote by $|\cC|$ the number
of lines of $\cC$. The above result be be rephrased as follows: 
\begin{thm*}
(Green-Tao). Let $\cP$ be a set of real points with $n=|\cP|\geq n_{0}$
for some sufficiently large absolute constant $n_{0}$. Then $|\cL_{\{3\}}(\cP)|\leq\left\lfloor n(n\text{\textminus}3)/6\right\rfloor +1$. 
\end{thm*}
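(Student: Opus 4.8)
The plan is to reduce the statement to an extremal analysis on cubic curves, with the difficulty concentrated entirely in the regime of few ordinary lines. Write $t_{k}=|\cL_{\{k\}}(\cP)|$ for the number of $k$-rich lines. Counting incident point-pairs in two ways gives the identity $\sum_{k\geq2}\binom{k}{2}t_{k}=\binom{n}{2}$, since each of the $\binom{n}{2}$ pairs of points lies on a unique line. Dropping the non-negative terms with $k\geq4$ yields $3t_{3}\leq\binom{n}{2}-t_{2}$, and a one-line computation shows that as soon as $t_{2}\geq n-3$ one gets $t_{3}\leq\tfrac{1}{6}(n^{2}-3n+6)=\tfrac{n(n-3)}{6}+1$, whence $t_{3}\leq\lfloor n(n-3)/6\rfloor+1$ by integrality. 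Thus I only need to treat the case $t_{2}<n-3$, in which $\cP$ spans fewer than $n$ ordinary lines.

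In that regime the second step is to invoke the Green--Tao structure theorem from \cite{GT}: a set of $n$ real points spanning at most $Kn$ ordinary lines must, once $n\geq n_{0}(K)$, coincide up to $O(K)$ exceptional points with a set lying on a cubic curve $\gamma$ (irreducible, or a conic plus a line, or a union of lines). Here $t_{2}<n$, so $K$ is an absolute constant and all but $O(1)$ points of $\cP$ lie on $\gamma$. The decisive gain is that, by B\'ezout, a line meets an irreducible cubic in at most three points, so on the cubic part every line is at most $3$-rich; the incidence identity for the $m=n-O(1)$ points on $\gamma$ collapses to the exact relation $\binom{m}{2}=t_{2}+3t_{3}$, and it remains only to bound $t_{2}$ from below on $\gamma$.

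The third step is the exact count via the group law. Choosing the group structure on the smooth locus of $\gamma$ with identity at an inflection point, three distinct points are collinear exactly when they sum to zero, so a pair $\{p,q\}\subset\cP$ spans an ordinary line precisely when its residual point $-(p+q)$ lies outside $\cP$ or equals $p$ or $q$. Counting collinear triples thus becomes an additive-combinatorial problem for the finite set $S\subset\gamma$ inside the real group $\RR/\ZZ$, $(\RR/\ZZ)\times\ZZ/2$, $\RR$, or $\RR^{\ast}$ according to the type of cubic. Minimizing the number of such ordinary pairs shows the minimum is attained by cosets of finite subgroups and is $m-O(1)$; in particular the cyclic example $S\cong\ZZ/m$ gives $t_{3}=(m-1)(m-2)/6$, matching the claimed bound for $m\equiv1\pmod6$ and exhibiting its sharpness. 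Substituting $t_{2}\geq m-O(1)$ into $\binom{m}{2}=t_{2}+3t_{3}$ reproduces $t_{3}\leq\lfloor m(m-3)/6\rfloor+1$ on $\gamma$, after which one adds back the $O(1)$ points off $\gamma$ and disposes of the reducible cubics by a separate, more elementary incidence count.

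The hard part is unquestionably the structure theorem: that few ordinary lines force the points onto a cubic is the main content of \cite{GT}, resting on the polynomial method together with substantial additive combinatorics and projective geometry, and I would cite it rather than reprove it. The second, more bookkeeping obstacle is the passage from $m$ back to $n$: a single point off $\gamma$ can lie on roughly $m/2$ new $3$-rich lines, so one must argue carefully that neither the $O(1)$ exceptional points nor the degenerate (conic-plus-line or three-line) configurations can beat the cyclic-subgroup example. It is exactly this optimization, combined with the divisibility analysis modulo $6$, that is responsible for the precise floor function and the additive constant $+1$ in the statement.
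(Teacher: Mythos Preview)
The paper does not prove this statement at all: it is presented as a direct quotation of \cite[Theorem~1.3]{GT}, merely rewritten in the paper's operator notation via the tautology that $\cL_{\{3\}}(\cP)$ is by definition the set of lines containing exactly three points of $\cP$. So the paper's ``proof'' is a one-line translation, with the entire mathematical content deferred to Green--Tao.

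Your proposal, by contrast, is a genuine sketch of the Green--Tao argument itself: the pair-counting reduction to the regime of few ordinary lines, the structure theorem placing almost all points on a cubic, and the group-law optimization. As an outline of \cite{GT} this is broadly accurate and honest about where the real work lies (you correctly flag the structure theorem as the deep input, and the reabsorption of the $O(1)$ stray points as the delicate bookkeeping). One small caution: your step~3 compresses a nontrivial amount of case analysis---the minimization over cosets, the treatment of the reducible cubics, and the mod-$6$ arithmetic that pins down the exact constant---and as written it reads more like a plausibility argument than a proof; but since you explicitly say you would cite \cite{GT} rather than reprove it, this is fine. For the purposes of this paper, though, all of this is overkill: the intended justification is simply ``this is Theorem~1.3 of \cite{GT} in our notation.''
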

Strongly related to that problem is the following result
\begin{thm}
(Dirac--Motzkin conjecture ; \cite[Theorem 1.4]{GT} of Green-Tao).
Suppose that $\cP$ is a finite set of $n$ points in the real plane,
not all on one line. Suppose that $n\geq n_{0}$ for a sufficiently
large absolute constant $n_{0}$. Then $\cP$ spans at least $n/2$
ordinary lines.
\end{thm}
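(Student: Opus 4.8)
The plan is to follow the algebraic strategy that makes the sharp constant $1/2$ accessible, since the purely combinatorial and topological route cannot reach it. Working in the dual, a point set $\cP$ becomes an arrangement $\cD(\cP)$ of $n$ lines, and the ordinary lines of $\cP$ — the elements of $\cL_{\{2\}}(\cP)$ — correspond exactly to the double points of $\cD(\cP)$. Melchior's inequality (Theorem \ref{thm:(Melchior)-Let-}) applied to $\cD(\cP)$ already yields $t_2(\cD(\cP))\geq 3$, recovering Sylvester--Gallai, but its $\sum_{r\geq 3}(r-3)t_r$ term obstructs any direct linear-in-$n$ lower bound. The weaker linear bounds of Kelly--Moser ($3n/7$) and Csima--Sawyer ($6n/13$) come from refined Euler-characteristic estimates on the dual arrangement; to gain the optimal slope I would instead exploit the rigidity forced by having \emph{few} ordinary lines.

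\emph{Structure theorem.} The heart of the argument is to show that there is an absolute constant $K$ such that, for $n\geq n_0$, if $\cP$ spans at most $Kn$ ordinary lines then all but $O(1)$ of its points lie on a single cubic curve $\g$, where $\g$ is either irreducible (smooth or nodal), or the union of a conic and a line, or a union of three lines. I would establish this by first extracting a cubic $\g$ meeting $\cP$ in many points, and then leveraging the scarcity of ordinary lines — via the group-law description below together with Melchior-type counting — to argue that $\cP$ cannot stray from $\g$ by more than boundedly many points, and that no higher-degree or additional low-degree components are needed. This step is where the genuine depth lies; producing $\g$, ruling out larger degree, and controlling both the degenerate cubics and the $O(1)$ exceptional points constitute the main obstacle.

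\emph{Counting via the group law.} Once $\cP$ lies almost entirely on $\g$, I would transport the incidence problem to the group $G$ on the smooth locus of $\g$ — isomorphic over $\RR$ to $\RR/\ZZ$ or $(\RR/\ZZ)\times(\ZZ/2\ZZ)$ in the smooth elliptic case, and to $\RR^{\times}$ or $\RR$ in the nodal and cuspidal cases. Three smooth points are collinear precisely when they sum to the identity, so writing the relevant points of $\cP$ as a subset $A\subset G$ with $|A|=n$, a pair $\{p,q\}$ determines an ordinary line exactly when $-(p+q)\notin A$ (the tangential coincidences being handled separately). The number of ordinary lines is thereby bounded below by an additive-combinatorial quantity counting pairs in $A$ whose negated sum avoids $A$, and minimizing this over all $A$ of size $n$ becomes a one-dimensional optimization.

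\emph{Conclusion.} Either $\cP$ already has more than $Kn\geq n/2$ ordinary lines and there is nothing to prove, or it lies on a cubic and the group-law count gives at least $n/2$, with equality governed by the B\"or\"oczky configurations — arithmetic-progression-like subsets of the circle group $\RR/\ZZ$, realized by regular polygons together with suitable points at infinity — which attain the bound and show the constant $1/2$ is sharp. Beyond the structure theorem, the remaining delicate point is this additive optimization: pinning the exact slope $1/2$, correctly absorbing the $O(1)$ exceptional points, and tracking the parity of $n$ so as not to lose against the extremal examples.
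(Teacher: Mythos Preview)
The paper does not prove this theorem. It appears in Section~2.6 solely as a quotation of Green and Tao's result \cite[Theorem 1.4]{GT}; the paper's only contribution is the one-line reformulation in operator notation, $|\cL_{\{2\}}(\cP)|\geq |\cP|/2$. There is therefore no proof in the paper to compare your proposal against.

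For what it is worth, your outline is a recognisable sketch of the actual Green--Tao argument (structure theorem forcing $\cP$ onto a cubic, then additive combinatorics via the group law), though some details are imprecise: the structure theorem leaves $O(K)$ exceptional points, not $O(1)$, when there are at most $Kn$ ordinary lines; and the cubic is not produced by ``extracting a cubic meeting $\cP$ in many points'' but via a delicate analysis of the triangular cell structure in the dual arrangement that arises from near-equality in Melchior's inequality. These are substantial parts of \cite{GT}, and your sketch does not supply them --- but since the present paper makes no attempt to supply them either, this is beside the point here.
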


In other words: 
\begin{thm*}
(Green-Tao). There exists $n_{0}\in\NN$ such that for a set of points
$\cP$ not all on one line with $|\cP|\geq n_{0}$, one has $|\cL_{\{2\}}(\cP)|\geq|\cP|/2$.
\end{thm*}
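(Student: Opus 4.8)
The plan is to observe that this starred statement is nothing more than a reformulation, in the language of the operators $\cL_{\mathfrak{n}}$, of the Dirac--Motzkin conjecture stated just above as Theorem 1.4 of Green--Tao. Accordingly, the proof reduces to unwinding the relevant definitions; the substantive content, namely the Green--Tao theorem itself, is invoked as a black box and not reproved.

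First I would recall from Section \ref{subsec:The-lines-points-oper} that for a point configuration $\cP$ and the set $\mathfrak{n}=\{2\}$, the operator $\cL_{\{2\}}$ produces
\[
\cL_{\{2\}}(\cP)=\{\ell\,|\,\,|\ell\cap\cP|=2\},
\]
that is, the set of lines meeting $\cP$ in exactly two points. By definition an \emph{ordinary line} of a point set is precisely a line passing through exactly two of its points; hence $\cL_{\{2\}}(\cP)$ is exactly the set of ordinary lines spanned by $\cP$, and $|\cL_{\{2\}}(\cP)|$ equals the number of such lines.

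Next I would match the hypotheses and the conclusion term by term. Writing $n=|\cP|$, the hypothesis that $\cP$ is a finite set of $n$ points, not all on one line, with $n\geq n_{0}$, is verbatim the same in both formulations. The conclusion of Theorem 1.4, that $\cP$ spans at least $n/2$ ordinary lines, reads, after the identification above, as $|\cL_{\{2\}}(\cP)|\geq n/2=|\cP|/2$, which is exactly the asserted inequality. This completes the translation.

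The only point requiring any care---and the sole place where the argument could conceivably be contested---is the convention for \emph{ordinary line}: one must confirm that Green and Tao count lines through \emph{exactly} two points, which is indeed their convention and which corresponds to $\mathfrak{n}=\{2\}$ rather than to $\cL_{2}=\cL_{\{k\geq 2\}}$. With that convention fixed there is no genuine obstacle; the entire difficulty of the result resides in the Green--Tao proof of Theorem 1.4, on which we rely.
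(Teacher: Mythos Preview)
Your proposal is correct and matches the paper's approach exactly: the paper introduces the starred theorem with the phrase ``In other words'' immediately after stating the Dirac--Motzkin conjecture (Theorem~1.4 of Green--Tao), giving no further argument, so the entire content is indeed the definitional unwinding you carry out. Your explicit identification of $\cL_{\{2\}}(\cP)$ with the set of ordinary lines, and your care in distinguishing $\cL_{\{2\}}$ from $\cL_{2}$, make the equivalence transparent.
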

There is an infinite number of point arrangements $\cP$ such that
$|\cL_{\{2\}}(\cP)|=|\cP|/2$, these arrangements are due to Böröczky.
The two known point arrangements such that $|\cL_{\{2\}}(\cP)|<|\cP|/2$
are i) the $7$ points which are the vertices of a triangle, the mid-points
and the center of that triangle, and ii) an arrangement of $13$ points
due independently to Böröczky and McKee. %
\begin{comment}
The dual of the infinite sequence and the two sporadic examples are
simplicial line arrangements. 
\end{comment}
{} As Green and Tao remark in their introduction of \cite{GT}, the
dual of point arrangements with few ordinary lines are line arrangement
containing many triangles. In fact the dual of the Böröczky points
arrangements in i) and ii) of \cite[Proposition 2.1]{GT} are the
two known infinite families of simplicial line arrangements which
are not quasi-trivial (see Section \ref{subsec:Simplicial-Arrangements-and}).

\section{\label{sec:Some-examples}Action of operators on some examples of
line arrangements }

\subsection{\label{subsec:Some-classical-arrangements}Some classical arrangements}

In this section we give some examples of line arrangements and how
is the action of some operators on them. 

Before that, let us recall the definition of $H$-constant which is
used in order to study the Bounded Negativity Conjecture. The $H$-constant
of a line arrangement $L_{0}$ of $d$ lines has been defined in \cite{IMRN}
by the formula 
\[
H=\frac{d^{2}-\sum_{m\geq2}m^{2}t_{m}}{\sum_{m\geq2}t_{m}}.
\]
It is proved in \cite{IMRN} (using inequalities of Hirzebruch) that
over a field of characteristic zero, $-4$ is a lower bound for the
$H$-constants of lines. Line arrangements with low $H$-constant
have remarkable properties, for example the line arrangement with
lowest known $H$-constant ($\simeq-3.3582$) is the Wiman configuration
of $45$ lines constructed from some sporadic complex reflection group.
Any known arrangements with $H$-constant $<-3$ were obtained by
removing some lines from the Wiman arrangement.

\subsubsection{\label{subsec:Trivial,-quasi-trivial-and}Trivial, quasi-trivial
and Finite plane arrangements}

\begin{example}
(\textit{The trivial arrangements}). A line arrangement is \textit{trivial}
if this is a union of a finite number $n\geq0$ of lines in the same
pencil i.e. of lines going through the same point. If $n\geq2$, this
is equivalent to $t_{n}=1$. Then for any line operator, $\L(L_{0})=\emptyset$. 
\end{example}

Conversely:
\begin{prop}
Let $L_{0}$ be a line arrangement such that $\L(L_{0})=\emptyset$
for any line operator $\L$. Then $L_{0}$ is trivial.
\end{prop}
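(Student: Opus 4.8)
The plan is to prove the contrapositive: if $L_{0}$ is not trivial, I will exhibit a single line operator $\L$ with $\L(L_{0})\neq\emptyset$. Since the empty arrangement and a single line are trivial by definition, non-triviality of $L_{0}$ forces its number of lines $d=|L_{0}|$ to satisfy $d\geq2$. I would then work with the operator $\L_{2}=\cL_{2}\circ\cP_{2}$, whose intermediate datum is the set $P=\cP_{2}(L_{0})$ of all points of multiplicity $\geq2$ of $L_{0}$, i.e.\ all the intersection points of the arrangement.

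The first step is to observe that $P\neq\emptyset$: in $\PP^{2}$ over $K$ any two distinct lines meet in a $K$-point, and since $d\geq2$ there is at least one such intersection point, so $P$ is nonempty. The crucial observation, and really the only non-formal step, is that non-triviality forces $|P|\geq2$. Indeed, if $|P|=1$, then every pairwise intersection of the lines of $L_{0}$ coincides with this single point $p$; hence every line of $L_{0}$ passes through $p$, so $L_{0}$ is a pencil, contradicting the assumption that $L_{0}$ is not trivial.

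Once $|P|\geq2$ is established, I would pick two distinct points $p,q\in P$ and consider the line $\ell$ through them. By construction $|\ell\cap P|\geq2$, so $\ell\in\cL_{2}(P)=\cL_{2}(\cP_{2}(L_{0}))=\L_{2}(L_{0})$. Thus $\L_{2}(L_{0})\neq\emptyset$, which contradicts the hypothesis that $\L(L_{0})=\emptyset$ for every line operator $\L$. This proves the contrapositive and hence the proposition. I expect no serious obstacle here; the entire content is the elementary dichotomy on $|P|$ together with the fact that a unique multiple point forces a pencil. A minor point, to be stated cleanly, is that one may alternatively take $\mathfrak{m}=\{|\ell\cap P|\}$ and use $\L_{\{k\geq2\},\mathfrak{m}}$ if one prefers an operator landing on a single prescribed multiplicity; either choice yields a nonempty image.
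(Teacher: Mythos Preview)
Your proof is correct and follows essentially the same route as the paper's own argument: both use the operator $\L_{2}$ and the observation that $\L_{2}(L_{0})=\emptyset$ forces $\cP_{2}(L_{0})$ to be empty or a single point, hence $L_{0}$ is a pencil. The paper phrases this as the direct implication in one sentence, while you write out the contrapositive with more detail, but the content is identical.
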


\begin{proof}
If $L_{0}$ is a line arrangement such that $\L_{2}(L_{0})=\emptyset$,
then $\cP_{2}$ is either empty or a point, therefore $L_{0}$ is
a trivial arrangement.
\end{proof}
\begin{example}
(\textit{The quasi-trivial arrangements}) A line arrangement of $n\geq3$
lines is \textit{quasi-trivial} if this is a union of a trivial arrangement
and a line not going through the intersection point of the lines in
the pencil. When $n\geq4$, this is equivalent to the equality $t_{n-1}=1$.
The $\L_{2}$-sequence associated to a quasi-trivial arrangement is
constant (and non-extinguishing). 
\end{example}

\begin{example}
(\textit{Finite plane arrangements}). Let us suppose that the base
field $K$ is the closure of a finite field $\FF_{q}$. Let $L_{0}\in\check{\PP^{2}(\FF_{q})}$
be a line arrangement. For a fixed line operator $\L$, let us consider
the $\L$-sequence $(L_{m})_{m}$ associated to $L_{0}$. For all
$m\geq0$, the intersection points of $L_{m}$ are defined over $\FF_{q}$
and therefore the lines in $L_{m+1}$ are in $\check{\PP}^{2}(\FF_{q})$.
Since these are finite sets, the $\L$-sequence is always periodic.
That simple example shows that proving that a sequence $(L_{n})_{n\geq0}$
associated to a line operator $\L$ and an arrangement $L_{0}$ acquire
more and more lines is a non-trivial result over a field of characteristic
$0$.\\
The arrangement $L_{0}=\check{\PP}^{2}(\FF_{q})$ is called a finite
plane arrangement. If $n,m\leq q+1$, the $\L_{n,m}$-sequence associated
to $\check{\PP}^{2}(\FF_{q})$ is constant. 
\end{example}

Quasi-trivial arrangements and finite plane arrangements are two examples
of line arrangements $L_{0}$ such that $\L_{2}(L_{0})=L_{0}$. Conversely:
\begin{prop}
\label{prop:Lambda2stable}Let $L_{0}\neq\emptyset$ be an arrangement
of lines. Suppose that $\L_{2}(L_{0})=L_{0}$. Then $L_{0}$ is a
quasi-trivial arrangement or a finite projective plane.
\end{prop}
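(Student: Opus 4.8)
The plan is to recognise, using the fixed-point hypothesis, that the points $\cP_{2}(L_{0})$ together with the lines $L_{0}$ form an extremal configuration for the De Bruijn--Erd\H{o}s inequality, and then to read off the two equality cases.

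First I would set $P=\cP_{2}(L_{0})$, so that the hypothesis $\L_{2}(L_{0})=L_{0}$ reads $\cL_{2}(P)=L_{0}$: the lines of $L_{0}$ are exactly the lines meeting $P$ in at least two points. A few degenerate cases must be cleared away. If $L_{0}$ had at most two lines then $P$ would be empty or a single point, forcing $\cL_{2}(P)=\emptyset\neq L_{0}$; hence $L_{0}$ has at least three lines. If $P$ were contained in a single line $m$, then $\cL_{2}(P)=\{m\}$ would be a single line, again impossible; so $P$ is not collinear. Dually, if all lines of $L_{0}$ passed through one point $x$, then $P=\{x\}$ and $\cL_{2}(P)=\emptyset$; so $L_{0}$ is not a pencil.

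Next I would invoke the De Bruijn--Erd\H{o}s theorem twice. Applied to the non-collinear set $P$ it gives $|\cL_{2}(P)|\geq|P|$, that is $|L_{0}|\geq|P|$. Applied to the dual point set $\cD(L_{0})$, which is non-collinear precisely because $L_{0}$ is not a pencil, and using the duality relation $\cD\circ\cP_{2}=\cL_{2}\circ\cD$ (so that the intersection points of $L_{0}$ correspond bijectively to the lines spanned by $\cD(L_{0})$), it gives $|\cP_{2}(L_{0})|=|\cL_{2}(\cD(L_{0}))|\geq|\cD(L_{0})|=|L_{0}|$, that is $|P|\geq|L_{0}|$. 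Combining the two inequalities yields $|P|=|L_{0}|$, so $P$ realises the equality case of De Bruijn--Erd\H{o}s.

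Finally I would apply the classification of that equality case: a non-collinear set with $|\cL_{2}(P)|=|P|$ is either a near-pencil (one line carrying all points but one) or the point set of a finite projective plane. In the first case $\cL_{2}(P)$ consists of the line through the $|P|-1$ collinear points together with the lines joining the remaining point to each of them, which is exactly a pencil of $|P|-1$ lines together with one transversal, i.e. a quasi-trivial arrangement. In the second case $\cL_{2}(P)=L_{0}$ is the line arrangement of a finite projective plane realised in $\PP^{2}$. This establishes the dichotomy. The step I expect to require the most care is the bookkeeping of the degenerate small cases together with the matching of the abstract equality configurations to their concrete realisations in $\PP^{2}$ over $K$, in particular verifying that the near-pencil is realised precisely as a quasi-trivial arrangement and that the projective-plane case is genuinely a finite projective plane arrangement.
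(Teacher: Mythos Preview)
Your proof is correct and follows essentially the same strategy as the paper's: two applications of De Bruijn--Erd\H{o}s, one to the primal configuration and one to the dual, forcing $|P|=|L_{0}|$ and then invoking the equality case. The only cosmetic difference is that you phrase both applications in the point version (applying the theorem to $P$ and to $\cD(L_{0})$), whereas the paper phrases both in the line version (applying it to $L_{0}$ and to $\check{P}_{0}=\cD(P)$), and the order of the two inequalities is swapped; by duality these are the same argument.
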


Proposition \ref{prop:Lambda2stable} is an application of the following
result:
\begin{thm}
\label{thm:(De-Bruin-=002013}(De Bruijn--Erdös). Let $L_{0}$ be
an arrangement of $d\geq3$ lines which is not a trivial arrangement
and let $t$ be the number of singular points. Then $t\geq d$ and
$t=d$ if and only if $L_{0}$ is a quasi-trivial arrangement or a
finite projective plane arrangement. 
\end{thm}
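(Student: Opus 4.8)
The plan is to reduce the statement to the classical (purely combinatorial) De Bruijn--Erd\H{o}s theorem on linear spaces via projective duality, and then to prove the latter by a double-counting argument. First I would pass to the incidence structure dual to the arrangement: regard the $d$ lines of $L_{0}$ as the \emph{points} of an abstract incidence geometry, and regard each singular point $p$ of $L_{0}$ as a \emph{block}, namely the set of those lines of $L_{0}$ passing through $p$. Since two distinct lines of $\PP^{2}$ meet in exactly one point, every pair of ``points'' (lines of $L_{0}$) lies in exactly one block (singular point); thus the dual structure is a \emph{linear space} on $d$ points with $t$ blocks. The hypothesis that $L_{0}$ is not trivial says precisely that no single block contains all $d$ points, so the linear space is non-degenerate and each block has at least two points.

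In this language the assertion $t\geq d$ is exactly ``number of blocks $\geq$ number of points''. For a point $x$ (a line of $L_{0}$) write $r_{x}$ for the number of blocks through it, and for a block $L$ write $k_{L}$ for the number of points it contains. The key local observation is that whenever $x\notin L$ one has $r_{x}\geq k_{L}$: joining $x$ to each of the $k_{L}$ points of $L$ produces $k_{L}$ distinct blocks through $x$. I would then run the standard term-by-term comparison of the two identities
\[
\sum_{x\notin L}\frac{1}{d\,(t-r_{x})}=1=\sum_{x\notin L}\frac{1}{t\,(d-k_{L})},
\]
each obtained by summing first over blocks, respectively over points. Assuming $t<d$ for contradiction, the inequalities $r_{x}\geq k_{L}$ and $t<d$ force $d\,r_{x}>t\,k_{L}$, hence each summand on the left strictly dominates the corresponding one on the right, contradicting the equality of the two sums (a non-incident pair $(x,L)$ exists because the space is non-degenerate). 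This yields $t\geq d$.

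For the equality case $t=d$, the same comparison now forces $d\,r_{x}=t\,k_{L}$, i.e. $r_{x}=k_{L}$, for every non-incident pair, from which I would deduce the usual dichotomy: either some block contains all but one point (a \emph{near-pencil}), or every point and every block has the same cardinality $q+1$, so that the structure is a projective plane of order $q$. Translating back through duality, a near-pencil corresponds to $d-1$ lines of $L_{0}$ through one point together with a further line meeting them in double points --- that is, a quasi-trivial arrangement --- while a projective plane of order $q$ corresponds to a finite projective plane arrangement (with $t=d=q^{2}+q+1$). I expect the main obstacle to be this equality analysis: the bound $t\geq d$ is a clean double count, but rigorously extracting the exact classification (near-pencil versus symmetric design, and excluding intermediate configurations) requires the careful rigidity argument, after which the dictionary between the combinatorial and the geometric descriptions must be checked to match the definitions of \emph{quasi-trivial} and \emph{finite projective plane} arrangements used here.
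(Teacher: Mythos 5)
Your proposal is sound, but there is nothing in the paper to compare it against: the paper does not prove this theorem at all, it simply refers the reader to \cite[Theorem 2.7]{EFU}. What you describe is the classical self-contained route: dualize to a finite linear space (points $=$ lines of $L_{0}$, blocks $=$ singular points; the linear-space axiom holds because two distinct lines of $\PP^{2}$ meet in exactly one point, and non-degeneracy is exactly non-triviality of $L_{0}$), then run the standard double-counting argument. Your two identities and the comparison $d\,r_{x}\geq d\,k_{L}>t\,k_{L}$ for non-incident pairs are correct; the one detail to add is positivity of the denominators, i.e.\ that no point of the linear space lies on \emph{every} block: if some line $\ell$ of $L_{0}$ contained every singular point, then for any second line $\ell'$ every further line would pass through $\ell\cap\ell'$, making $L_{0}$ trivial. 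For the equality case, the quickest completion of your sketch is: $r_{x}=k_{L}$ for a non-incident pair forces every block through $x$ to meet $L$, hence any two blocks meet, and a non-degenerate finite linear space in which any two lines meet is a near-pencil or a projective plane. The near-pencil branch translates immediately into the paper's quasi-trivial arrangements. The point you rightly flag but leave open is the other branch of the dictionary: the combinatorics only yields an \emph{abstract} projective plane of order $q$ whose points sit in $\check{\PP}^{2}(K)$ with collinearity induced by ambient lines. Since joins and intersections computed in this subplane agree with the ambient ones, every Desargues configuration of the subplane closes up inside it; the subplane is therefore Desarguesian, hence isomorphic to $\PP^{2}(\FF_{q})$ by classical coordinatization (plus Wedderburn), and after a projectivity it is exactly the arrangement $\check{\PP}^{2}(\FF_{q})$ of the paper's definition --- in particular $\FF_{q}$ must embed in $K$, so over fields of characteristic zero the equality $t=d$ occurs only for quasi-trivial arrangements. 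With that lemma supplied, your argument is complete and matches the statement as formulated in the paper.
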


For a proof of Theorem \ref{thm:(De-Bruin-=002013}, we refer to \cite[Theorem 2.7]{EFU}.
\begin{proof}
(Of Proposition \ref{prop:Lambda2stable}). Let $P_{0}=\{p_{1},\dots,p_{t}\}=\cP_{2}(L_{0})$
be the set of singularities of the line arrangement $L_{0}=\{\ell_{1},\dots,\ell_{d}\}$.
By the Theorem of De Bruijn--Erdös, we have $t\geq d$. 

Let us denote by $[\ell]$ the point in the dual plane corresponding
to a line $\ell$, and by $^{t}p$ the line in the dual plane defined
by a point $p$ in the plane. Since $\L_{2}(L_{0})=L_{0}$, for any
two points $p\neq q\in P_{0}$, the line $\ell$ passing through $p$
and $q$ is contained in $L_{0}$. On the dual plane level, that implies
that any two lines $^{t}p,^{t}q$ in the line arrangement $\check{P_{0}}=\{^{t}p_{1},\dots,^{t}p_{t}\}$
meet in a point $[\ell]$ such that $\ell\in L_{0}$. Therefore the
set $\{[\ell],\,|\,\ell\in L_{0}\}$ is the singularity set of $\check{P_{0}}$,
and by the theorem of De Bruijn--Erdös, one get $d\geq t$. We
obtain that $d=t$, and the result follows again from Theorem \ref{thm:(De-Bruin-=002013}. 
\end{proof}

\subsubsection{\label{subsec:Classical-arrangements}Classical arrangements}

A reference for most of the line arrangements presented here is \cite[Chapter 6]{GR}. 
\begin{example}
\label{exa:(The-complete-quadrilateral).}(\textit{The complete quadrilateral}).
Let $L_{0}$ be the arrangement of six lines going through four points
in general position.\\
\begin{center}
\begin{tikzpicture}[scale=1.2]

%\draw (0,0) -- (9,0);
%\draw [very thick] (8+0.866,-0.5) -- (9+0.866,-0.5);

%\draw [very thick] (0,0) -- (3,0) -- ++(72:3) -- ++(144:3) -- ++(216:3) -- cycle;

\draw  ( 0,-0.7 )  -- (0, 1.2); 
\draw  ( -0.866,-0.5 )  -- (0,1); 
\draw  ( -0.966,-0.67 )  -- (0.1,1.173); 
\draw  ( 0.966,-0.67 )  -- (-0.1,1.173); 
\draw  ( -1.066,-0.5 )  -- (1.066,-0.5); 
\draw  ( -1.066,-0.62 )  -- (0.86,0.5); 
\draw  ( 1.066,-0.62 )  -- (-0.86,0.5); 

\draw ( 0, 0 ) node {$\bullet$};
\draw ( 0, 1 ) node {$\bullet$};
\draw ( 0.866,-0.5 ) node {$\bullet$};
\draw ( -0.866,-0.5 ) node {$\bullet$};

%\draw (0,0) node [above]{$A_{1}$};

\end{tikzpicture}
\end{center} 
%\end{figure}\\
 Then $P_{0}=\cP_{2}(L{}_{0})$ is a set of $7$ points and $\cL_{3}(\cP_{2}(L_{0}))=L_{0}$,
i.e. $L_{0}$ is fixed by $\L_{2,3}$, unless in characteristic $2$,
for which $\L_{3,2}(L_{0})=\L_{3}(L_{0})$ is the Fano plane, i.e.
the $7$ lines in $\PP^{2}(\FF_{2})$, giving the unique $7_{3}$-configuration.
In characteristic $0$, the first terms of the associated $\L_{2}$-sequence
$(L_{n})_{n}$ are such that \\
\begin{tabular}{|c|c|c|c|c|c|c|}
\hline 
 & $|L_{n}|$ & $H_{cst}$ & $t_{2}$ & $t_{3}$ & $t_{4}$ & $t_{6}$\tabularnewline
\hline 
$L_{0}$ & $6$ & $-1,714$ & $3$ & $4$ &  & \tabularnewline
\hline 
$L_{1}$ & $9$ & $-2.077$ & $6$ & $4$ & $3$ & \tabularnewline
\hline 
$L_{2}$ & $25$ & $-2.464$ & $60$ & $24$ & $3$ & $10$\tabularnewline
\hline 
\end{tabular}\\
where the column $H_{cst}$ is an approximation of the $H$-constant;
the line arrangement $L_{3}$ has $1471$ lines. The number of lines
in $L_{m}$ grows to infinity, as shown in \cite{CW}. It would be
interesting to know the limit of the $H$-constants of the configurations
$L_{n}$. 
\end{example}

\begin{example}
(\textit{\label{exa:(Hesse-and-dual}Hesse and dual Hesse arrangements}).
The Hesse arrangement $\cH$ is defined over fields containing a third
root of unity $\o\neq1$. It is the union of $12$ lines and its set
$P_{0}$ of $21$ singular points are such that:
\[
t_{2}=12,\,t_{4}=9.
\]
The nine $4$-points and the $12$ lines form a $(9_{4},12_{3})$-configuration
(see Section \ref{subsec:The-(r,n)-configurations} for the definition
of a $(r_{k},s_{m})$-configuration). One has $\cL_{3}(P_{0})=\cH$,
$\cP_{3}(\cH)=P_{0}$, thus $\L_{3}(\cH)=\cH$ and the associated
$\L_{3}$-sequence is constant. The Hesse arrangement is $\L_{\{4\},\{3\}}$-constant
and $\cL_{\{2\}}(\mathcal{D}(\cH))$ is projectively equivalent to
$\cH$. 

The dual Hesse arrangement $\check{\cH}=\cL_{\{4\}}(\mathcal{D}(\cH))$
is a $(12_{3},9_{4})$-configuration of $9$ lines with equations
\[
\begin{array}{c}
-x+z,-x+y,-y+z,-\om x+z,-\om x+y,\\
-\om^{2}x+z,-\om^{2}x+y,-\om^{2}y+z,-\om y+z,
\end{array}
\]
where $\o^{2}+\o+1=0$. The arrangement $\check{\cH}$ has $12$ triple
points 
\[
\begin{array}{c}
(1:0:0),(0:1:0),(0:0:1),(1:1:1),\\
(1:1:\om),(1:1:\om^{2}),(1:\om:1),(1:\om^{2}:1),\\
(\om:1:1),(\om^{2}:1:1),(\om:\om^{2}:1),(\om^{2}:\om:1),
\end{array}
\]
it is the unique known arrangement over $\CC$ which has only triple
points. One has $\L_{3}(\check{\cH})=\check{\cH}$, and $\L_{3,4}(\check{\cH})=\check{\cH}$.
The line arrangement $\L_{\{2\},\{4\}}(\cH)$ is projectively equivalent
to $\check{\cH}$. 

The arrangement $\text{ML}$ obtained by removing one line in $\check{\cH}$
is a $8_{3}$-arrangement known as the \textit{MacLane} or \textit{Möbius-Kantor}
arrangement. The arrangement $\L_{3,2}(\text{ML})$ is isomorphic
to the Hesse arrangement $\cH$. The line arrangement $\L_{3,2}(\check{\cH})$
is an arrangement of $21$ lines, it is the union of $\check{\cH}$
and a line arrangement projectively equivalent to $\cH$. Its singularities
are $t_{2}=36,t_{4}=9,t_{5}=12$ and it is a free arrangement (see
\cite[Proposition 4.6]{Ilardi}).

The (complex) line arrangements $\cH$ and $\L_{3,2}(\check{\cH})$
are combinatorially simplicial i.e. their singularities satisfy Equation
\eqref{eq:Simplicial}, cf. later in Section \ref{subsec:Simplicial-Arrangements-and}
the definition of simplicial line arrangement.
\end{example}

\begin{example}
(\textit{Ceva$(n)$ arrangement}) The Ceva$(n)$ arrangement is given
by the $3n$ lines 
\[
(x^{n}-y^{n})(x^{n}-z^{n})(y^{n}-z^{n})=0.
\]
Ceva$(2)$ is the complete quadrilateral, Ceva$(3)$ is the dual Hesse
arrangement. The arrangement Ceva$(n)$ has $n^{2}$ triple points,
three $n$-points and no other singularities; the $H$-constant is
$-\frac{3n^{2}}{n^{2}+3}$. Each line contain one $n$-point and $n$
triple points, thus for $n\geq3$, the $\Lambda_{3}$-sequence is
constant. In characteristic $0$, this is the unique known infinite
family of \textit{Sylvester-Gallai} arrangements, i.e. line arrangements
with no double points. These are also reflection arrangements, i.e.
the lines are the fixed lines of some reflexion group (denoted $G(n,n,3)$)
acting on the plane. 

The arrangement $\overline{\text{Ceva}(n)}$ obtained as the union
of $\text{Ceva}(n)$ and the three lines $xyz=0$ is a free arrangement.
Each of the three lines add up $n$ double points, and the three $n$-points
of $\text{Ceva}(n)$ become $n+2$-points, so that the singularities
are 
\[
t_{2}=3n,t_{3}=n^{2},t_{n+2}=3.
\]
These line arrangements are combinatorially simplicial and one has
$\L_{3}(\overline{\text{Ceva}(n)})=\text{Ceva}(n)$. 
\end{example}

\begin{example}
(\textit{Polygonal arrangements} $A_{1}(2m)$, \textit{extended Polygonal
arrangements} $A_{1}(4m+1)$, see \cite{Hirz}). For $m\geq3$, the
regular arrangement $A_{1}(2m)$ consists of $2m$ lines, of which
$m$ lines are determined by the edges of a regular $m$-gon in the
Euclidean plane (contained in $\PP^{2}$), while the other $m$ are
the lines of symmetry of that $m$-gon. For $m=3$, we get the complete
quadrilateral; for $m>3$, one has 
\[
t_{2}=m,\,t_{3}=\frac{m(m-1)}{2},\,t_{m}=1,\,t_{r}=0\text{ otherwise.}
\]
Let $P_{0}=\cP_{3}(A_{1}(2m))$ be the set of triple or higher multiplicity
points of the $A_{1}(2m)$ arrangement. \\
$\bullet$ Suppose that $m$ is odd. Let us show that $\L_{3}(A_{1}(2m))=A_{1}(2m)$
when $m\geq5$. Let us write $m=2k+1$. Then a line of symmetry contains
$k$ triple points (intersection by pairs of $2k$ lines of the faces)
and one double point (intersection of the line with its opposite face).
We get in that way $km=\frac{m(m-1)}{2}$ triple points and $m$ double
points. A face contains $m-1$ triple points, intersection of $m-1$
symmetry lines, and the $m-1$ other faces. We get $m(m-1)$ triple
points, but each point is counted twice, so that the total number
of triple points is $\frac{m(m-1)}{2}$. Therefore, as soon as $m$
is odd and $m\geq5$, the associated $\L_{3}$-sequence is constant.
\\
$\bullet$ Suppose that $m$ is even, $m=2k$. Let us show that when
$k\geq3$, 
\[
\L_{3}(A_{1}(4k))=A_{1}(4k+1),\,\,\L_{3}(A_{1}(4k+1))=A_{1}(4k+1),
\]
where $A_{1}(4k+1)$ is the union of $A_{1}(4k)$ and the line at
infinity, its singularities are 
\[
t_{2}=3k,\,t_{3}=2k(k-1),\,t_{4}=k,\,t_{2k}=1,\,t_{r}=0
\]
otherwise. The arrangement $A_{1}(4k)$ has $2k$ faces, $k$ lines
of symmetries each containing two edges of the $2k$-gon, $k$ lines
of symmetries, each containing two double points, which are in the
middle between to consecutive edges. \\
The number of double points on a face is $1$ and there are $2k-1$
triple points: each face meet the other $2k-1$ faces and $2k-1$
lines of symmetries into $2m-1$ triple points, the remaining line
of symmetry cuts the face in one double point. \\
A line of symmetry containing two edges cuts the $2k-1$ other lines
of symmetry at the same point, a point of multiplicity $2k$, and
cuts the $2k$ lines of the $2k$-gon in $k$ triple points. \\
A line of symmetry containing two double points contains also the
$2m$-point and $k-1$ triple points, intersection points of the $2k$
faces. \\
The $A_{1}(4k+1)$-configuration is obtained by adding the line at
infinity, which contains then $k$ $4$-points, which were previously
triple points on $A_{1}(4k)$ coming from two parallel faces and the
parallel line of symmetry, and moreover $k$ double points, intersection
of the $k$ lines containing $2$ edges of the $2k$-gon.\\
Thus when $k-1\geq2$, we obtain that all lines in $A_{1}(4k)$ contain
$3$ or more $m$-points with $m\geq3$. \\
For $k=2$, i.e. $A_{1}(9)$, the singularities are $t_{2}=6,\,t_{3}=4,t_{4}=3$
and we get that $\L_{3}(A_{1}(9))$ is the complete quadrilateral
$A_{1}(6)$. 
\end{example}

\begin{example}
(\textit{Klein configuration}) %
\begin{comment}
il existe 7 droites de l'arrangement de Klein telles que OperLines(Dual(L7))
soit 21 droites contenant 42 points doubles 21 points triples et 7
6-points
\end{comment}
{} The Klein configuration $\cK$ of $21$ lines is a reflexion arrangement:
it is the union of the hyperplanes fixed by the $21$ involutions
of the unique simple group of order $168$ (acting faithfully on $\PP^{2}$).
The singularity set of $\cK$ is such that
\[
t_{3}=28,t_{4}=21,\,t_{r}=0\text{ for }r\neq3,4.
\]
The arrangement $\cK$ is defined over fields containing $\sqrt{-7}$;
the $H$-constant is $H(\cK)=-3$. The $21$ quadruple points and
$21$ lines form a $(21_{4})$-configuration (see Section \ref{subsec:The-(r,n)-configurations}
for the definition. Each lines contains $4$ triple points and $4$
quadruple points. The configuration is $\L_{4}$-constant: $\L_{4}(\cK)=\cK$.
It is also a fixed arrangement for $\L_{3,4}$, but not for $\L_{3}$
nor for $\L_{4,3}$.

The arrangement $\L_{3}(\cK)$ has $133$ lines, and singularities
\[
t_{2}=2436,\,t_{3}=588,\,t_{4}=84,\,t_{5}=168,\,t_{9}=28,\,t_{12}=21.
\]
The $9$-points and $12$-points of $\L_{3}(\cK)$ are the singular
points of the Klein configuration. \\
The arrangement $\L_{4,3}(\cK)$ has $49$ lines, and singularities
\[
t_{2}=252,\,t_{3}=112,\,t_{8}=21,
\]
it is fixed by $\L_{4,3}$, one has $\L_{4}(\L_{4,3}(\cK))=\cK$,
moreover $\L_{3}(\L_{4,3}(\cK))$ contains $889$ lines. %
\begin{comment}
By Sylvester's Theorem, an arrangement of real lines which is not
a trivial arrangement must have double points, and the image of that
arrangement by $\L_{2}$ will have also double points. \\
While it seems reasonable that also for the complex plane the $\L_{2}$-operator
will create many $2$-points, it seems remarkable and interesting
that for higher operators, (i.e. operators $\L_{n,m}$ with $n,m\geq3$),
the number of points of order $\geq3$ can also increase.\\
It would be interesting to understand how and why the new triple or
higher order points appear. For example, let $(\cK_{n})_{n\in\NN}$
be the sequence of arrangements associated to the $\L_{3}$-operator
such that the number of lines increases. Is it true that $t_{3}(\cK_{n})$
tends to $\infty$ when $n$ goes to $\infty$ ? Then, how much compared
to $t_{2}(\cK_{n})$ ?
\end{comment}
\end{example}

\begin{example}
 (\textit{Grünbaum-Rigby configuration}, see \cite{GR}, see also
\cite{GP}) The Grünbaum-Rigby configuration $\cC=(P_{0},L_{0})$
of $91$ points and $21$ lines is such that 
\[
t_{2}=63,\,t_{3}=7,\,t_{4}=21,
\]
with $H_{cst}=-30/13\simeq-2.30$. The $21$ $4$-points and the $21$
lines form a $(21_{4})$-configuration which, as an abstract configuration,
is isomorphic to the $(21_{4})$-configuration of the Klein arrangement
(see Section \ref{subsec:The-(r,n)-configurations} for the definition
of a $(r_{k},s_{m})$-configuration). The $63$ double points with
the $21$ lines form a $(63_{2},21_{6})$-configuration, moreover
each line contains a unique triple point. One has $\L_{4}(L_{0})=L_{0}$:
this is a fixed configuration for $\L_{4}$. \\
Let $P_{1}=\cP_{3}(L_{0})$ be the $28$ points with multiplicity
$\text{\ensuremath{\geq3}}$. The arrangement $\cL_{3}(P_{1})=\L_{3}(L_{0})$
has $50$ lines, and the singularities are 
\[
t_{2}=259,\,t_{3}=119,\,t_{7}=29.
\]
The points in $P_{1}$ are now points of multiplicity $7$ in $\L_{3}(L_{0})$.
The arrangement $\L_{3}(L_{0})$ is fixed by $\L_{4,3}$. \\
The arrangement $\Lambda_{4}(\L_{3}(L_{0}))$ has $29$ lines, singularities
\[
t_{2}=70,t_{3}=21,t_{4}=7,t_{5}=21,t_{7}=1,
\]
and $H$-constant $-157/60$. It is a fix arrangement for $\L_{4}$.
\\
The arrangement $\cL_{4}(P_{1})$ has $28$ lines, the $H$-constant
is $H=-294/113\simeq-2.6$, and singularities 
\[
t_{2}=63,\,t_{3}=28,\,t_{5}=21,\,t_{7}=1.
\]
This arrangement is fixed for $\L_{4}$; the $28$ lines and $28$
triple points form a $28_{3}$-configuration. 
\end{example}

\begin{example}
(\textit{The Wiman configuration}) The Wiman configuration $\cW$
is a reflexion arrangement, it is the union of $45$ lines defined
over a field containing a $15$-th root of unity. It's singularities
are 
\[
t_{3}=120,\,t_{4}=45,\,t_{5}=36.
\]
 The $45$ lines and $45$ $4$-points form a $45_{4}$-configuration
and the $45$ lines and $36$ $5$-points form a $(36_{5},45_{4})$-configuration;
with the $120$ triple point that gives a $(120_{3},45_{8})$-configuration.
The configuration $\cW$ is fixed by the $\L_{5,4}$ operator. It's
$H$-constant is $H(\cW)=-225/67\simeq-3.3582$, this is the lowest
$H$-constant known for line arrangements. 

The configuration $\L_{4,5}(\cW)$ has $81$ lines and singularities
\[
t_{2}=180,t_{3}=480,t_{5}=36,t_{8}=45,
\]
and $H(\L_{4,5}(\cW))=-753/247\simeq-3.0486<-3$. It is fixed by the
operator $\L_{4,5}$. %
\begin{comment}
Let $L$ be a simplicial arrangement, which means that 
\[
\begin{array}{c}
t_{2}=3+\sum_{r\geq3}(r-3)t_{r}=3+\sum_{r\geq3}rt_{r}-3\sum_{r\geq3}t_{r}\\
=3+\left(\sum_{r\geq2}rt_{r}\right)-2t_{2}-3\left(\sum_{r\geq2}t_{r}-t_{2}\right)\\
3+\left(\sum_{r\geq2}rt_{r}\right)-3\left(\sum_{r\geq2}t_{r}\right)+t_{2}
\end{array}
\]
thus 
\[
3\left(\sum_{r\geq2}t_{r}\right)=3+\left(\sum_{r\geq2}rt_{r}\right)
\]
The $H$-constant is 
\[
H=\frac{d-\sum_{r\geq2}rt_{r}}{\sum_{r\geq2}t_{r}}
\]
therefore
\[
H=\frac{d-3(\s-1)}{\s}=-3+\frac{d+3}{\s}>-3.
\]
where $\s$ is the number of singularities of the configuration.
\end{comment}
\begin{comment}
understand bad reduction of $\mathcal{W}$ in char. $p>0$.
\end{comment}
\end{example}

\subsection{\label{subsec:The-(r,n)-configurations}The $(r_{k},s_{m})$-configurations}

\subsubsection{Definition of $(r_{k},s_{m})$-configurations}

For integers $r,s,k,m$, a $(r_{k},s_{m})$-configuration $(\cQ,\cC)$
is an arrangement $\cC$ of $s$ lines and a set of $r$ singular
points $\cQ$ of $\cC$ such that each line of $\cC$ contains exactly
$m$ points of $\cQ$ and each point of $\cQ$ is incident to exactly
$k$ lines of $\cC$. One has $rk=sm$, moreover, if $r=s$, one simply
speaks of a $r_{k}$-configuration. If the numbers $r,s$ are unimportant
for the problem considered, one speaks of a $[k,m]$-configuration
(and of a $k$-configuration if $k=m$). References for points and
lines configurations are for example \cite{Dolgachev}, \cite{Grunbaum}.
Note that in the definition of configuration, there can exists other
$m$-rich lines and other $k$-points. 
\begin{lem}
\label{lem:If-an-arrangement}Let $\cC$ be a $\L_{\{k\},\{m\}}$-constant
arrangement. Then $(\cP_{\{k\}}(\cC),\cC)$ is a $[k,m]$-configuration.
\\
Let $(\cQ,\cC)$ be a $[k,m]$-configuration. Then each line of $\cC$
is a line of $\L_{\{k\},\{m\}}(\cC)$. 
\end{lem}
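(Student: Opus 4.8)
The plan is to unwind the definition $\L_{\{k\},\{m\}}=\cL_{\{m\}}\circ\cP_{\{k\}}$ and to match its two layers with the two incidence conditions defining a configuration. Recall that $\cP_{\{k\}}(\cC)$ is the set of points lying on exactly $k$ lines of $\cC$ (the ``point side''), while for a point set $\cQ$ the lines of $\cL_{\{m\}}(\cQ)$ are precisely those meeting $\cQ$ in exactly $m$ points (the ``line side''); hence $\L_{\{k\},\{m\}}(\cC)$ is the family of lines of the plane passing through exactly $m$ of the $k$-points of $\cC$. Both assertions should then fall out by reading these two conditions against the configuration axioms, in which each chosen point lies on exactly $k$ lines and each line carries exactly $m$ chosen points.

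For the first assertion I would set $\cQ:=\cP_{\{k\}}(\cC)$. Each $q\in\cQ$ is by definition a $k$-point of $\cC$, hence incident to exactly $k$ lines of $\cC$, and since $k\geq 2$ these points are singular, so the point-incidence axiom holds for free. The hypothesis $\L_{\{k\},\{m\}}(\cC)=\cC$ reads $\cC=\cL_{\{m\}}(\cQ)$, i.e. $\cC$ is exactly the family of lines meeting $\cQ$ in exactly $m$ points; in particular every line of $\cC$ carries exactly $m$ points of $\cQ$, which is the line-incidence axiom. Thus $(\cP_{\{k\}}(\cC),\cC)$ is a $[k,m]$-configuration, and this direction is completely formal.

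For the second assertion, let $(\cQ,\cC)$ be a $[k,m]$-configuration. Every $q\in\cQ$ is incident to exactly $k$ lines of $\cC$, so $q$ is a $k$-point of $\cC$ and therefore $\cQ\subseteq\cP_{\{k\}}(\cC)$. Fixing $\ell\in\cC$, the configuration axiom gives $|\ell\cap\cQ|=m$, whence $|\ell\cap\cP_{\{k\}}(\cC)|\geq m$; so $\ell$ already carries at least $m$ of the $k$-points of $\cC$, and it remains only to upgrade this to an equality in order to conclude $\ell\in\cL_{\{m\}}(\cP_{\{k\}}(\cC))=\L_{\{k\},\{m\}}(\cC)$.

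The main obstacle is exactly this upgrade, which hinges on the gap between the chosen set $\cQ$ and the full set $\cP_{\{k\}}(\cC)$ of $k$-points. Indeed $|\ell\cap\cP_{\{k\}}(\cC)|=m$ holds if and only if no line of $\cC$ passes through a $k$-point of $\cC$ lying outside $\cQ$; and since every such $k$-point already sits on $k\geq 2$ lines of $\cC$, this amounts to the equality $\cP_{\{k\}}(\cC)=\cQ$, i.e. to the absence of the ``other $k$-points'' permitted by the remark following the definition. I would therefore either invoke $\cP_{\{k\}}(\cC)=\cQ$ as the operative normalization (which holds in all the configurations considered in this paper, where $\cQ$ is the complete set of $k$-points), or state the reverse implication in the weaker form ``each line of $\cC$ meets $\cP_{\{k\}}(\cC)$ in at least $m$ points''. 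Under that normalization every line of $\cC$ meets $\cP_{\{k\}}(\cC)$ in exactly $m$ points, the second assertion follows, and it dovetails with the first.
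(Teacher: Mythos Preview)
Your argument for the first assertion is correct and matches the paper's: unwinding $\L_{\{k\},\{m\}}=\cL_{\{m\}}\circ\cP_{\{k\}}$, the hypothesis $\L_{\{k\},\{m\}}(\cC)=\cC$ says precisely that every line of $\cC$ meets $\cP_{\{k\}}(\cC)$ in exactly $m$ points, which together with the tautological point-incidence gives the configuration axioms.

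For the second assertion your approach is again the paper's, but you are more careful, and you have in fact located a genuine gap that the paper's own proof glosses over. The paper argues: ``$\ell$ contains $m$ exactly $k$-points, i.e.\ $m$ points in $\cP_{\{k\}}(\cC)$, thus $\ell$ is a line of $\L_{\{k\},\{m\}}(\cC)$''. This tacitly assumes that the $m$ points of $\cQ$ on $\ell$ exhaust $\ell\cap\cP_{\{k\}}(\cC)$, i.e.\ that $\cQ=\cP_{\{k\}}(\cC)$. As you observe, the paper's own definition explicitly allows other $k$-points outside $\cQ$, and in that case the conclusion can fail: take for instance two triangles in general position, $\cC$ the six lines, $\cQ$ the six vertices; then $(\cQ,\cC)$ is a $[2,2]$-configuration, yet every line of $\cC$ carries five double points and hence lies in $\cL_{\{5\}}(\cP_{\{2\}}(\cC))$, not in $\L_{\{2\},\{2\}}(\cC)$.

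Your proposed remedies are exactly right: either impose the normalization $\cQ=\cP_{\{k\}}(\cC)$ (which is what the paper implicitly intends and what holds in all the examples it treats), or weaken the conclusion to $|\ell\cap\cP_{\{k\}}(\cC)|\geq m$. With the normalization your proof goes through and coincides with the paper's intended argument.
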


\begin{proof}
Suppose that $\cC$ is a line arrangement such that $\L_{\{k\},\{m\}}(\cC)=\cC$.
That means that each line of $\cC$ contains $m$ points in $\cP_{\{k\}}(\cC)$,
and therefore $(\cP_{\{k\}}(\cC),\cC)$ is a $[k,m]$-configuration.

Suppose that $(\cQ,\cC)$ is a $[k,m]$-configuration. If $\ell$
is a line of $\cC$, then $\ell$ contains $m$ exactly $k$-points
i.e. $m$ points in $\cP_{\{k\}}(\cC)$, thus $\ell$ is line of $\L_{\{k\},\{m\}}(\cC)$.
 
\end{proof}
It may happen that for a $[k,m]$-configuration $\cC$, the line arrangement
$\L_{\{k\},\{m\}}(\cC)$ contains more lines, as the following examples
show:
\begin{example}
A) consider the arrangement $\cC$ of $6$ lines with equations $x,x+z,x-z,y,y+z,y-z$.
The $9$ points $(a:b:1)\in\PP^{2}$ with $a,b\in\{-1,0,1\}$ are
the double points of $\cC$, and there are two triple points at infinity.
The $9$ double points and the $6$ lines form a $(9_{2},6_{3})$-configuration.
However, the arrangement $\L_{\{2\},\{3\}}(\cC)$ contains $\cC$
and the two lines with equation $x-y,x+y$. 

B) The Pappus $9_{3}$-configuration $\text{P}$ has also nine double
points which with the $9$ lines form a $9_{2}$-configuration, moreover
$\L_{\{2\},\{2\}}(\text{P})$ has $36$ lines. 

C) Consider a $k$-configuration $\cC$, then using appropriate projective
transformations, one can take $k$ projectively equivalent copies
$\cC_{1}$,...,$\cC_{k}$ of $\cC$ in $\PP^{2}$ with one $k$-point
$p_{i}$ on each $\cC_{i}$ in such a way that $p_{1},\dots,p_{k}$
are contained on a line not in $\cup\cC_{i}$, then that line is in
$\L_{\{k\}}(\cC)$ but is not in $\cC=\cup\cC_{i}$.
\end{example}

Let $(\cQ,\cC)$ and $(\cQ',\cC')$ be two $[k,m]$-configurations.
Up to using a projective automorphism of $\PP^{2}$, let us suppose
that $\cQ\cap\cQ'=\emptyset$. Then the union $(\cQ\cup\cQ',\cC\cup\cC')$
is a $[k,m]$-configuration. That leads to the following definitions:
\begin{defn}
A $[k,m]$-configuration $(\cQ,\cC)$ is said \textit{connected} if
for any two points $p,p'\in\cQ$, there exists a sequence of $k$-points
$p_{1}=p,p_{2},\dots,p_{j}=p'$ such that $p_{i},p_{i+1}$ are two
$k$-points on the same line of $\cC$.
\end{defn}

\begin{problem}
We do not know if there exists connected $[k,m]$-configurations with
$k>2$ such that $\L_{\{k\},\{m\}}(\cC)\neq\cC$.
\end{problem}

The dual arrangement of a $[k,m]$-configuration $(\cQ,\cC)$ is defined
by $\check{\cC}=\cD(\cQ)$ and $(\cP_{m}(\check{C}),\check{C})$ is
a $[m,k]$-configuration. In particular, a $n_{k}$-configuration
$(\cQ,\cC)$ has the feature that its dual is a $n_{k}$-configuration;
it is called self dual if the two configurations are isomorphic i.e.
if there exists an isomorphism between planes that sends one arrangement
into the other. 

\subsubsection{Examples of $(r_{k},s_{m})$-configurations}

An arrangement $\cC$ of $n\geq3$ lines in general position is a
$(\left(\frac{n(n-1)}{2}\right)_{2},n_{n-1})$-configuration, and
$\L_{2,n-1}(\cC)=\cC$; one even has the relation $\L_{\{2\},\{n-1\}}(\cC)=\cC$. 

Another example of $\L_{\{2\},\{m\}}$-constant arrangements with
$m\geq3$ is as follows: Let $\cC$ be the union of two pencils of
$m$ lines in general position. Each line of $\cC$ contains exactly
$m$ double points, thus $\cC$ is $\L_{\{2\},\{m\}}$-constant. More
generally, if $\cC$ is the union of $r>1$ pencils of $s$ lines
in general position, then $\cC$ is $\L_{\{2\},\{(r-1)s\}}$-constant.

These arrangements have the property that each line contain a unique
$k$-point with $k>2$. But for example, it is possible to construct
a $\L_{\{2\},\{4\}}$-constant arrangement of $9$ lines with $6$
triple points and $18$ double points such that each line contain
$2$ triple points and $4$ double points.

We have seen that the Fano plane $\PP^{2}(\FF_{2})$ is a $7_{3}$-configuration.
There exist $n_{3}$-configurations in characteristic $\neq2,3$ if
and only if $n\geq8$. 

Consider the dual Hesse $(12_{3},9_{4})$-configuration $\check{\cH}$
(which exists over fields containing a third root of unity). Removing
one line and the four points on that line gives the the Möbius-Kantor
$8_{3}$-configuration $\text{MK}$. One has $\L_{3}(\text{MK})=\text{MK}$,
and $\L_{2,3}(\text{MK})=\check{\cH}$. 

The Pappus configuration (see Section \ref{subsec:Pappus'-and-Steiner's})
is a $9_{3}$-configuration with singularities $t_{2}=9,t_{3}=9$;
it is $\L_{2,3}$ and $\L_{3}$-constant. There are two other non
equivalent $9_{3}$-configurations, they are also $\L_{3}$-constant.
These configurations are defined over the rationals. 

It is known that there exist $n_{4}$-configurations of real lines
for $n=18$ and $n\geq20$ except possibly for $n\in\{23,37,43\}$.
There exists no $19_{4}$-configuration over the real field. There
are only two real $18_{4}$-configurations (over $\RR$), the first
was found by Bokowski and Schewe (in \cite{BS}) and is defined over
$\QQ(\sqrt{5})$, the second was found by Bokowski and Pilaud (see
\cite{BP}) and is defined over $\QQ(\sqrt[3]{108+12\sqrt{93}})$.

\subsubsection{\label{subsec:-arrangements}$2$-arrangements}

Let us define a $[k,m]$-arrangement as a $[k,m]$-configuration $(\cQ,\cL)$
such that $t_{k}(\cL)=\cQ$: a $n_{2}$-arrangement is therefore an
arrangement of $n$ lines such that each line contains exactly two
double points. 

The $k$-configurations for $k\geq3$ have been extensively studied,
and it is easy to construct $n_{2}$-configurations by discarding
some nodal points on $n$ lines in general position. But if one is
rather interested by $2$-arrangements, much less seems to be known.
Here is the list of $2$-arrangements we are aware of:
\begin{enumerate}
\item The triangle: three lines with $t_{2}=2$.
\item The three $9_{3}$-configurations and their nodes are also $9_{2}$-arrangements.
\item The Hesse arrangement is a $12_{2}$-arrangement.
\item The $15_{2}$ icosahedral line arrangement.
\end{enumerate}
If one imposes no triple points, one gets the following result, which
the author owes to Piotr Pokora:
\begin{thm}
Let $\mathcal{L}\subset\mathbb{P}_{\mathbb{C}}^{2}$ be an $n_{2}$-arrangement
with $n\geq3$ and assume that $t_{3}(\cL)=0$. Then $\cL$ is either
the triangle (so $n=3$) or the Hesse arrangement (so that $n=12$).
\end{thm}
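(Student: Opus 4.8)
The plan is to first use Hirzebruch's inequality (Theorem \ref{thm:(Hirzebruch-inequality-for}) to drastically restrict the possible multiplicities, and then to exploit the resulting rigid combinatorics. Since $\cL$ is an $n_2$-arrangement, each of its $n$ lines carries exactly two double points and every double point arises this way, so $t_2(\cL)=n=d$. The arrangement is neither trivial (otherwise $t_2=0$) nor quasi-trivial (otherwise some line would carry a single double point), so Hirzebruch's inequality applies; combined with $t_2=d$ and $t_3=0$ it gives $\sum_{k\geq5}(k-4)t_k\leq t_2+t_3-d=0$. As each coefficient $k-4$ is positive for $k\geq5$, this forces $t_k=0$ for all $k\geq5$, and hence $\cL$ has only double and quadruple points.

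Next I would extract the numerical constraints. Fixing a line $\ell$, it meets the other $n-1$ lines, picking up one further line at each of its two double points and three at each of its quadruple points, so $n-1=2+3q$ where $q$ is the number of $4$-points on $\ell$; thus $3\mid n$, and writing $n=3j$ gives $q=j-1$ quadruple points on every line. Counting pairs of lines, $\frac{n(n-1)}{2}=t_2+6t_4=n+6t_4$, whence $t_4=\frac{n(n-3)}{12}=\frac{3j(j-1)}{4}$ and integrality forces $4\mid j(j-1)$. For $j=1$ one gets $t_4=0$ and $\cL$ is three non-concurrent lines, i.e. the triangle, while $j=4$ gives $n=12$, $t_4=9$, the numerics of the Hesse arrangement.

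The crux is then to rule out the remaining admissible values of $j$ (for instance $j=5$, $n=15$ passes every divisibility test). Here the equality $t_2=d$ is decisive: with $t_3=0$ and $t_k=0$ for $k\geq5$ we are in fact at equality in Hirzebruch's inequality, the extremal case associated to ball quotients over $\CC$. I would therefore invoke the classification of complex line arrangements attaining this equality; among them, the requirement $t_k=0$ for $k\notin\{2,4\}$ together with the per-line condition (exactly two double points per line) singles out precisely the triangle and the Hesse arrangement. The underlying rigidity of the nine inflection points of a smooth cubic, which realise the quadruple points of Hesse as a $(9_4,12_3)$-configuration, is what makes $j=4$ the only nontrivial realizable value.

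I expect the genuine difficulty to lie entirely in this last step. The inequality and the counting above are forced and elementary, but they cannot by themselves separate $n=12$ from phantom solutions such as $n=15$, since $t_4$ enters none of the Hirzebruch-type inequalities with a constraining sign; the obstruction is a realizability phenomenon over $\CC$. The cleanest route is to appeal to the known classification of the extremal (equality) case rather than to argue non-realizability case by case. A more self-contained alternative would be to show that the $t_4$ quadruple points must form a geometric $(t_4)_4$-configuration incident to the $n$ lines with no accidental triple points, and to exclude such configurations for $j\notin\{1,4\}$, but making this unconditional appears substantially harder.
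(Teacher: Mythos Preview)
Your proposal is correct and follows essentially the same route as the paper: use $t_2=n$, $t_3=0$ and Hirzebruch's inequality to force $t_k=0$ for $k\geq5$, observe that this puts $\cL$ at equality, and then invoke the ball-quotient classification (the paper cites \cite[Kapitel~3.1~G.]{BHH87}) to conclude that only the triangle and the Hesse arrangement occur. Your extra divisibility computations ($3\mid n$, $4\mid j(j-1)$) are correct and illustrative but not needed once the classification is appealed to, and you are right that this classification is where the real content lies.
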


\begin{proof}
It is easy to see that if $n=3$, then the only possible arrangement
is the triangle with $t_{2}=3$. We can assume that $n\geq4$. We
can also assume, without loss of generality, that $t_{n}=0$ and $t_{n-1}=0$,
since both trivial and quasi-trivial are not $n_{2}$-arrangements.
Then $\mathcal{L}$ satisfies the Hirzebruch inequality:
\[
t_{2}+t_{3}\geq n+\sum_{r\geq5}(r-4)t_{r}
\]
(see Theorem \ref{thm:(Hirzebruch-inequality-for}). Since $t_{2}=n$
and $t_{3}=0$, we get that for every $r\geq5$ we have $t_{r}=0$.
Summing up, if $\mathcal{L}$ is an $n_{k}$-arrangement with $t_{3}=0$,
then $t_{2}=n$, $t_{4}=\frac{n^{2}-3n}{12}$, and $t_{r}=0$ for
every $r\geq5$. Moreover, our arrangement satisfies the equality
in the above Hirzerbuch's inequality. Our goal is to check which line
arrangements with only double and quadruple points satisfy Hirzebruch's
inequality. It turns out, somehow surprisingly, that it follows from
a result devoted to the existence of ball-quotient surfaces constructed
as Hirzebruch-Kummer covers \cite[Kapitel 3.1 G.]{BHH87}, namely
the only arrangement satisfying all these conditions is the Hesse
arrangement of $12$ lines with $12$ double and $9$ quadruple points. 
\end{proof}

\subsection{\label{subsec:Simplicial-Arrangements-and}Simplicial Arrangements
and $\protect\L$-operators}

An arrangement of real lines is called \textit{simplicial} if all
the polygons cut out by the lines are triangles; this is equivalent
to require the equality 
\begin{equation}
3+\sum_{r\geq2}(r-3)t_{r}=0.\label{eq:Simplicial}
\end{equation}
The quasi-trivial, the polygonal arrangements and extended polygonal
arrangements (see Section \ref{subsec:Classical-arrangements}) are
the three known infinite families of simplicial arrangements. Few
other sporadic examples of simplicial arrangements are known. The
state of the art is given in \cite{CEL} (together with the normals
of the lines of simplicial line arrangements). The simplicial arrangements
are labelled $A(n,k)$ or $A(n,k)_{i}$ where $n$ is the number of
lines, $k,i$ are integers characterizing the arrangement. 

One can check that among the $119$ simplicial arrangements with less
than $37$ lines (which are listed in \cite{CEL}), $100$ are $\L$-constant
for some $\L\in\{\L_{3},\L_{3,4},\L_{3,5},\L_{3,6},\L_{2,3},\L_{2,4},\L_{4}\}$.
For the other arrangements, one has the following relations
\[
\begin{array}{cc}
\L_{2,4}(A(14,112)_{3})=A(15,128)_{2}, & \L_{3,4}(A(24,316))=A(25,336)_{3}\\
\L_{3,4}(A(18,184)_{2})=A(19,200)_{3}, & \L_{3,4}(A(24,320))=A(25,336)_{6}\\
\L_{3,4}(A(19,192)_{1})=A(18,180)_{4}, & \L_{3,5}(A(25,320))=A(24,304)\\
\L_{3,4}(A(20,220)_{3})=A(21,240)_{4}, & \L_{3,5}(A(26,380))=A(31,480)\\
\L_{3,4}(A(20,220)_{4})=A(21,240)_{5}, & \L_{3,5}(A(29,400)=A(31,480)\\
\L_{3,4}(A(23,304))=A(25,336)_{6}, & \L_{3,5}(A(32,544))=A(33,576)\\
\L_{3,6}(A(24,312))=A(25,336)_{5}, & \L_{3,6}(A(36,684))=A(37,720)_{2},
\end{array}
\]
moreover the arrangement $A(26,380)$ is $\L_{\{3,5\},5}$-constant.
For $\cA=A(28,420)_{4}$, one has $\L_{3,\{5\}}\L_{3,5}(\cA)=\cA$,
while $\L_{3,5}(\cA)$ is $\L_{3,5}$-constant, equal to $A(29,448)_{4}$.
Also $A(28,420)_{5}$ and $A(28,420)_{6}$ are permuted under the
operator $\L_{\{3,4,5\},5}$. The images of both arrangements by $\L_{3,5}$
is $A(31,480)$. The operator $\L_{\{3,4,5\},5}$ permutes $A(27,400)$
and $A(29,440)$. 

It is possible to obtain (many) other relations among these arrangements,
such as $\L_{4,5}(A(34,612)_{1})=A(13,96)_{3}$.

\subsection{\label{subsec:Free-arrangements}Free arrangements}

A line arrangement is free if the sheaf of vector fields tangent to
this arrangement splits as a sum of two line bundles over $\PP^{2}$
(see \cite{OT} for the definitions). Roughly speaking, Terao's Conjecture
says that the topology and the geometry of a free arrangement is determined
by its combinatoric. For example the Hesse arrangement is free. A
free arrangement $L_{0}$ is such that the roots of its characteristic
polynomial 
\[
T^{2}+(1-d)T+1-d+\sum_{k\geq2}(k-1)t_{k}(L_{0})
\]
are integers, where $d=|L_{0}|$. 

One can construct free arrangements from a known free arrangement
by the addition-deletion theorem of Terao (see e.g. \cite{Abe}).
A natural question is whether one can construct all free arrangement
starting from the arrangement $\emptyset$ and using the addition-deletion
theorem. An arrangement (in $\PP^{2}$) is said inductively free if
it can be obtained by the addition theorem, recursively free if it
can be obtain by addition and deletion of lines. 

Cuntz-Hoge arrangement (see \cite{CH}) $\text{CH}_{27}$ of $27$
lines was the first found line arrangement which is free but non recursively
free. It is defined over $\QQ(\zeta)$, where $\zeta$ is a primitive
fifth root of unity, this is in fact the reflection arrangement of
the exceptional complex reflection group $G_{27}$. Its singularities
are
\[
t_{2}=15,\,t_{3}=70,\,t_{7}=6.
\]
The line arrangement $\L_{\{7\},\{2\}}(\text{CH}_{27})$ is the icosahedral
line arrangement of $15$ lines with $t_{2}=15,t_{3}=10,t_{5}=6$,
it is also known as the simplicial arrangement $\cA(15,120)$. The
line arrangement $\L_{\{7\},\{2\}}(\text{CH}_{27})$ is contained
in $\text{CH}_{27}$; the complementary arrangement $L_{12}$ of $12$
lines has only double points and $\L_{\{2\},\{6\}}(L_{12})=\cA(15,120)$.

The dual line arrangement $\cD_{\{2\}}(\text{CH}_{27})$ is also (projectively
equivalent to) the icosahedral line arrangement $\cA(15,120)$.

In \cite{ACKN}, Abe, Cuntz, Kawanoue and Nozawa constructed two arrangements
$\text{ACKN}_{13}$ and $\text{ACKN}_{15}$ that are free but not
recursively free arrangements. In Section \ref{sec:First-divergent-arrangements}
we will discuss these arrangements which are interesting for understanding
the dynamic of the operator $\L_{\{3\},\{2\}}$. We found anew these
lines arrangement $\text{ACKN}_{13}$ and $\text{ACKN}_{15}$ in a
completely different way, when studying the operator $\ldt$ (see
Subsections \ref{subsec:A-flashing-6-lines} and \ref{subsec:Povera-arrangements}).
The arrangement $\text{ACKN}_{15}$ is also related to $\cA(15,120)$,
see \cite{RoulleauDy}.

\subsection{\label{subsec:Zariski-pair-arrangements}Zariski pair arrangements}

There are several definitions of Zariski pairs $(L,L')$; roughly
speaking these are lines arrangements that have the same combinatoric
but such that the topology of the pairs $(\PP^{2},L),$ $(\PP^{2},L')$
is different.

$\bullet$ The Zariski pair $\text{Ry}_{13},\text{Ry}_{13}'$ defined
by Rybnikov in \cite{Rybnikov} are $13$ lines arrangements with
$t_{2}=33,t_{3}=15$, they define the same matroid, (meaning the $13$
lines have the same configurations, see Section \ref{sec:Matroids-and-line}).
However one can check that the generic arrangements have different
behavior under the map $\L_{\{3\},\{2\}}$: the arrangement $\ltd(\text{Ry}_{13})$
has $42$ lines but $\ltd(\text{Ry}_{13}')$ has $45$. Also $\L_{3}(\text{Ry}_{13})$
has $11$ lines, but $\L_{3}(\text{Ry}_{13}')$ possess $10$ lines. 

We do not claim that the geometric differences we found between these
line arrangements give another proof that one has Zariski pairs. However,
we remark that in \cite[Proposition 4.6]{GBV}, the different topology
of the pairs is detected thanks to an alignment of three specific
points.

$\bullet$ The Zariski pair $\text{ACCM}_{11},\text{ACCM}_{11}'$
defined by Artal, Carmona, Cogolludo and Marco in \cite{ACCM} are
arrangements of $11$ lines. These line arrangements are defined over
$\QQ(\sqrt{5})$ and are conjugated under the Galois group. One of
the lines has been added in order to trivialize the automorphism group
of the combinatoric and $\L_{3}(\text{ACCM}_{11})$ removes that line:
the line arrangement $\L_{3}(\text{ACCM}_{11})$ is the simplicial
arrangement $\cA(10,60)_{3}$, which is the union of the side of a
pentagon and the lines through the center and the vertices of the
pentagon.

$\bullet$ The Zariski pair $\text{GB}_{12},\text{GB}_{12}'$ defined
by Guerville-Ballé in \cite{GB} have $12$ lines, defined over $\QQ(\ze_{5})$
where $\ze_{5}$ is a primitive $5^{th}$-root of unity. There is
also a line which forces the triviality of the automorphism group
of the combinatoric. Applying $\L_{3}$ removes that line, and then
one gets projectively equivalent line arrangements with $t_{2}=13,t_{3}=6,t_{4}=4$. 

$\bullet$ Various Zariski pairs of $13$ lines over the rationals
are defined by Guerville-Ballé and Viu in \cite{GBV}. One of these
pairs has the following non-bases (see Section \ref{sec:Matroids-and-line}):

\[
\begin{array}{c}
(1,5,7),(1,8,10),(1,11,12),(2,5,6),(2,8,9),(2,11,13),(3,4,5),\\
(3,6,8),(3,7,11),(3,9,10),(3,12,13),(2,4,7,10,12),(1,4,6,9,13).
\end{array}
\]
The moduli space of realizations has two one dimensional irreducible
components $\cM^{\pm}$. The normals of the $13$ lines are the (projectivisation
of) the columns of the following matrix:
\[
\left(\begin{array}{ccccccccccccc}
1 & 0 & 0 & 1 & 1 & 1 & 1 & 1 & 1 & 1 & 1 & 1 & 1\\
0 & 1 & 0 & 1 & 1 & a^{2} & b^{2} & a^{2} & a & a & b^{2} & b & b\\
0 & 0 & 1 & 1 & a^{2} & a^{2} & 1 & a & a & 1 & b & 1 & b
\end{array}\right),
\]
where $ab=\pm1$ for $\cM^{\pm}$, moreover the parameter $a$ must
be in some open set $U^{\pm}$ of $\PP^{1}$ so that the line arrangement
has no more singularities or is not degenerate. Let us denote by
$\cA^{\pm}(a)$ the $13$ lines arrangement associated to the parameter
$a\in U^{\pm}$. Then the line arrangement $\ltd(\cA^{+})$ has $18$
lines, whereas $\ltd(\cA^{-})$ has $30$ lines: the different nature
of $\cA^{\pm}$ is detected by the $\ltd$ operator. %
\begin{comment}
Question à B. G-V. : a t'il étudié les cas limites ? ceux ci sont
conjugés, rigides.
\end{comment}

$\bullet$ The Zariski triple defined by Guerville-Ballé in \cite{GB20}
are arrangements of $12$ lines, defined over $\QQ(\ze_{7})$ where
$\ze_{7}$ is a primitive $7^{th}$-root of unity. Their singularity
set is $t_{2}=17,t_{2}=11,t_{4}=t_{5}=1$. These line arrangements
are rigid and are not projectively equivalent, however they are conjugated
under the Galois group. Therefore, they are indiscernible from the
view-point of line operators since the action of the Galois group
commutes with the action of the line operators. Similar arrangements
are constructed in \cite{GB21}.

\section{\label{sec:First-divergent-arrangements}First divergent arrangements}

\subsection{\label{subsec:Minimal--constant-arrangements}Minimal $\protect\L$-constant
arrangements}

In Section \ref{sec:Some-examples}, we obtained a classification
of line arrangements $L_{0}$ such that $\L_{2}(L_{0})=L_{0}$. The
triangle $T$, i.e. $3$ lines in general position, is the non-trivial
$\L_{2}$-constant configuration which has the minimal number of lines. 

The configuration $L$ of $4$ lines in general position is such that
$\L_{2,3}(L)=L$ and any other arrangement with $4$ lines is sent
to $\emptyset$ by $\L_{2,3}$. The complete quadrilateral is also
$\L_{2,3}$-constant.  One may ask the following question:
\begin{problem}
For a given operator $\L$, find the minimal number $n>0$ of lines
such that there is an arrangement $L$ with $n$ lines and $\L(L)=L$.
\end{problem}

The complete quadrilateral $A_{1}(6)$ is $\L_{3,2}$-constant, and
so are the simplicial arrangements $A(7,32)$ and $A(9,48)$. The
Hesse configuration of $12$ lines is $\L_{3,2}$ and $\L_{4,2}$-constant.
Other $\L_{4,2}$-constant arrangements are $A(13,96)_{3},$ $A(15,120)$;
the first one is defined over $\QQ$.

\subsection{\label{subsec:A-condition-for}A condition for divergence }

We have seen in Example \ref{exa:(The-complete-quadrilateral).} (see
also Section \ref{subsec:Divergence-L_2} below), that four lines
in general position over a field of characteristic $0$ is the configuration
with the minimal number of lines such that the associated $\L_{2}$-sequence
diverges, i.e. the number of lines of the associated $\L_{2}$-sequence
goes to infinity. More generally, for a fixed operator $\L$, it would
be interesting to known which configurations of lines are $\L$-divergent,
and with the minimal number of lines for that property. The following
observation is elementary:
\begin{lem}
\label{lem:BorneLineArrangementdiff}Let $\cA$ be an arrangement
of $m$ lines. Suppose that $\L_{n,k}(\cA)$ contains a line $\ell$
not on $\cA$. Then $m\geq nk$. 
\end{lem}

\begin{proof}
The line goes though at least $k$ points of multiplicity $\geq n$,
since $\ell$ is not a line of $\cA$ there are at least $nk$ lines. 
\end{proof}

\subsection{\label{subsec:Divergence-L_2}Divergence of the $\protect\L_{2}$-operator}

Let $\cC_{0}$ be four real lines in general position and let $(\cC_{n})_{n\in\NN}$
be the associated $\L_{2}$-sequence.
\begin{prop}
The sequence $(\cC_{n})_{n\in\NN}$ is divergent. 
\end{prop}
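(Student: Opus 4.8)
The plan is to show that the number of lines cannot remain bounded, by establishing that the sequence $(\cC_{n})$ is nested and that a bounded nested sequence would necessarily stabilize at an $\L_{2}$-fixed arrangement, which the classification results quoted above forbid over $\RR$ for our starting configuration.

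First I would prove the monotonicity $\cC_{n}\subseteq\cC_{n+1}$. The key elementary observation is that in any non-trivial arrangement $\cA$ of at least three lines, every line $\ell\in\cA$ carries at least two singular points. Indeed, the remaining lines meet $\ell$ in at least two distinct points: if they all met $\ell$ at a single point $p$, then either $p\in\ell$, forcing every line through $p$ and hence triviality, or $p\notin\ell$, in which case the distinct lines through $p$ cut $\ell$ in distinct points, a contradiction. Thus each line of $\cA$ passes through $\geq 2$ points of $\cP_{2}(\cA)$, so it lies in $\cL_{2}(\cP_{2}(\cA))=\L_{2}(\cA)$, giving $\cA\subseteq\L_{2}(\cA)$. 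Since $\cC_{0}$ (four real lines in general position) is non-trivial and the inclusion carries non-triviality upward, by induction each $\cC_{n}$ is non-trivial and $\cC_{n}\subseteq\cC_{n+1}$.

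Next I would argue by contradiction. Suppose $|\cC_{n}|$ is bounded. A non-decreasing, bounded, integer-valued sequence is eventually constant, so $|\cC_{N}|=|\cC_{N+1}|$ for some $N$; combined with $\cC_{N}\subseteq\cC_{N+1}$ this forces $\cC_{N+1}=\cC_{N}$, i.e. $\cC_{N}$ is $\L_{2}$-fixed. By Proposition \ref{prop:Lambda2stable}, $\cC_{N}$ is then quasi-trivial or a finite projective plane. Both are excluded over $\RR$: a finite projective plane has only points of multiplicity $\geq 3$, i.e. $t_{2}=0$, which contradicts Melchior's inequality (Theorem \ref{thm:(Melchior)-Let-}) forcing $t_{2}(\cC_{N})\geq 3$ for any real arrangement not in a pencil; and $\cC_{N}$ cannot be quasi-trivial, because it contains the four lines of $\cC_{0}$, no three of which are concurrent, whereas a quasi-trivial arrangement has all but one of its lines through a single point. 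This contradiction shows $|\cC_{n}|$ is unbounded, and being non-decreasing it diverges to $\infty$.

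The main obstacle is the first step, namely verifying that no line is ever lost, since this monotonicity is exactly what reduces divergence to the rigidity of $\L_{2}$-fixed points provided by Proposition \ref{prop:Lambda2stable} (and ultimately by the De Bruijn--Erd\H{o}s theorem). Once nestedness is secured, ruling out stabilization is immediate. I would emphasize that the exclusion of finite projective planes is precisely where the hypothesis over $\RR$ is used, via the Sylvester--Gallai/Melchior obstruction; over a field of positive characteristic this step fails and such planes reappear as genuine fixed points, consistent with the earlier remark that finite-plane sequences are always periodic.
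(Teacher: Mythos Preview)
Your argument is correct. The monotonicity step is a little clumsily phrased (the case $p\notin\ell$ cannot occur, since $p$ is by construction an intersection point on $\ell$), but the substance is right: in a non-trivial arrangement every line meets the others in at least two distinct points, so $\cC_n\subseteq\cC_{n+1}$ and in particular $\cC_0\subseteq\cC_n$ for all $n$.

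Your route is genuinely different from both proofs the paper gives. The paper's short argument uses Melchior directly to force \emph{strict} growth at every step (a real arrangement has double points, and those double points must acquire higher multiplicity under $\L_2$, which requires new lines); the longer argument, taken from \cite{CW}, is a convex-hull analysis showing that a hypothetical fixed $\cC_k$ would have to be quasi-trivial. You instead let the sequence stabilize abstractly and then invoke the De Bruijn--Erd\H{o}s classification (Proposition~\ref{prop:Lambda2stable}) to name the two possible fixed points, ruling out the finite projective plane via Melchior and the quasi-trivial case by the inclusion $\cC_0\subseteq\cC_N$. This is arguably the cleanest of the three, since it reuses a structural result already proved in the paper rather than redoing a bespoke geometric analysis; on the other hand, the paper's Melchior argument gives the slightly stronger conclusion that $|\cC_n|$ strictly increases at \emph{every} step, not just eventually.
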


That follows directly from Melchior inequality (see Theorem \ref{thm:(Melchior)-Let-}).
Indeed, at each step, $\cC_{k}$ is contained in $\cC_{k+1}$ and
no double point in $\cC_{k}$ remains a double point in $\cC_{k+1}$.
The starting line arrangement being real, this is a sequence of real
line arrangements, thus $\cC_{k}$ has some double points, therefore
the number of lines is strictly increasing with $k$. 

We reproduce below another proof from \cite{CW}, which is also interesting. 
\begin{proof}
Since $\cC_{k}\subset\cC_{k+1}$, it is sufficient to prove that $\cC_{k+1}\neq\cC_{k}$
for all $k\geq0$. Suppose that $\cC_{k}=\cC_{k+1}$ for some $k$,
and let $\left\langle \cP_{2}(\cC_{k})\right\rangle $ be the convex
hull of the singular points of $\cC_{k}$. Since $\cC_{0}\subset\cC_{k}$,
$\left\langle \cP_{2}(\cC_{k})\right\rangle $ cannot be a line. If
$\left\langle \cP_{2}(\cC_{k})\right\rangle $ has $4$ or more sides,
then two nonadjacent sides (which are lines in $\cC_{k}$) meet at
some $m$-point of $\cC_{k}$ outside $\left\langle \cP_{2}(\cC_{k})\right\rangle $,
which is impossible. Therefore $\left\langle \cP_{2}(\cC_{k})\right\rangle $
has $3$ sides: let $a,b,c\in\cP_{2}(\cC_{k})$ be the set of vertices
of $\left\langle \cP_{2}(\cC_{k})\right\rangle $. Suppose that there
are points of $\cP_{2}(\cC_{k})$ along at least two sides and not
in $\{a,b,c\}$, say $x\in ab,\,y\in ac$. Then the line $xy\in\cC_{k}$
cuts $bc$ outside of $\left\langle \cP_{2}(\cC_{k})\right\rangle $,
which is again a contradiction. Suppose that there exists some point
$x\in\cP_{2}(\cC_{k})$ in the interior of $\left\langle \cP_{2}(\cC_{k})\right\rangle $,
then the lines $ax,bx,cx$ cut respectively $bc,ac,ab$ at points
of the sides not in $\{a,b,c\}$, which is a contradiction. Therefore
$\cC_{k}$ is a quasi-trivial arrangement, this is impossible since
$\cC_{0}\subset\cC_{k}$, and one concludes that $\cC_{k+1}\neq\cC_{k}$. 
\end{proof}
\begin{comment}
For proving the divergence of the $\L_{m,n}$-operator, one needs
an initial configuration $\cC_{0}$ such that $\cC_{0}\subsetneq\cC_{1}$.
The lines of $\cC_{k}$ such that $\L_{m,n}(\cC_{k})=\cC_{k}$ contain
at least $n$ points which are $r$-points (with $r\geq m$) of $\cC_{k}$. 

Suppose $m=2$.

if $\left\langle \cP_{2}(\cC_{k})\right\rangle $ has $4$ or more
sides, then two nonadjacent sides $\ell_{1},\ell_{2}$ of $\left\langle \cP_{2}(\cC_{k})\right\rangle $
meet at some point $q$ outside $\left\langle \cP_{2}(\cC_{k})\right\rangle $,
therefore one of the lines $\ell_{1},\ell_{2}$ is not a line of $\cC_{k}$
(if both were lines of $\cC_{k}$, then $q$ would be a point in $\cP_{2}(\cC_{k})$,
and therefore a point of $\left\langle \cP_{2}(\cC_{k})\right\rangle $).
Moreover at most one or two consecutive sides are lines in $\cC_{k}$.
The other sides are containing $t$ singular points of $\cC_{k}$
with $t<n$ and $t\geq2$ and are not lines of $\cC_{k}$. 
\end{comment}

\subsection{About the divergence of the $\protect\L_{2,3}$-operator\label{subsec:Div-of-Lambda23}}

By Lemma \ref{lem:BorneLineArrangementdiff}, a divergent $\L_{2,3}$-sequence
contains at least $6$ lines. Figure \ref{fig:Div-OpLa-23} represents
the successive images $\cC_{0},\cC_{1},\cC_{2}$ by $\L_{2,3}$ of
the six line arrangement $\cC_{0}$ which is the union of three pairs
of parallel lines (in black). 

\begin{figure}[h]
\begin{center}
\begin{tikzpicture}[scale=0.6]

\clip(-6.02,-4.04) rectangle (5.06,6.02); 
\draw [domain=-6.02:5.06] plot(\x,{(-0.-0.*\x)/6.});
\draw [domain=-6.02:5.06] plot(\x,{(--6.-0.*\x)/6.}); 
\draw  (0.,-4.04) -- (0.,7.02);
\draw  (1.,-4.04) -- (1.,7.02); 
\draw [domain=-6.02:5.06] plot(\x,{(--3.--1.*\x)/1.});
\draw [domain=-6.02:5.06] plot(\x,{(--4.--1.*\x)/1.});
\draw [color=blue,domain=-6.02:5.06] plot(\x,{(-0.--1.*\x)/1.});
\draw [color=blue,domain=-6.02:5.06] plot(\x,{(--4.-0.*\x)/1.}); 
\draw [color=blue] (-3.,-4.04) -- (-3.,7.02); 
\draw [color=green,domain=-6.02:5.06] plot(\x,{(--4.--1.*\x)/2.});
\draw [color=green,domain=-6.02:5.06] plot(\x,{(--3.--2.*\x)/1.}); 
\draw [color=green,domain=-6.02:5.06] plot(\x,{(-1.--1.*\x)/-1.});

\draw[color=blue] (-4.5,2.5) node {$+\ell_\infty$};
%\draw[color=black] (-3.5,-1.1) node {$A(10,60)_1$};
%\draw ( 0, 0 ) node {$\bullet$};
\end{tikzpicture}
\end{center} 
%\end{figure}

\caption{\label{fig:Div-OpLa-23}The line arrangement $\protect\co$ and its
images}
\end{figure}

The arrangement $\cC_{1}=\L_{2,3}(\cC_{0})$ is the union of the black
lines, the 3 blue lines and the line at infinity $\ell_{\infty}$
(which contains $3$ double points of $\cC_{0}$); the arrangement
$\cC_{2}=\L_{2,3}(\cC_{1})$ is the union of $\cC_{1}$ and the three
green lines. The following table gives the number of lines and singularities
of $\cC_{0},\,\cC_{1},\,\cC_{2},\,\cC_{3}$: \\
\begin{tabular}{|c|c|c|c|c|c|c|}
\hline 
 & $|\cC|$ & $t_{2}$ & $t_{3}$ & $t_{4}$ & $t_{6}$ & $t_{7}$\tabularnewline
\hline 
$\cC_{0}$ & $6$ & $15$ &  &  &  & \tabularnewline
\hline 
$\cC_{1}$ & $10$ & $9$ & $6$ & $3$ &  & \tabularnewline
\hline 
$\cC_{2}$ & $13$ & $12$ & $16$ & $3$ &  & \tabularnewline
\hline 
$\cC_{3}$ & $28$ & $87$ & $31$ & $15$ & $3$ & $3$\tabularnewline
\hline 
\end{tabular}
\begin{rem}
The arrangement $\cC_{4}$ has $946$ lines. The arrangement $\cC_{2}'$
obtained by removing the line at infinity of $\cC_{2}$ has $12$
lines and $t_{2}=9,t_{3}=19$. That line arrangement has been obtained
by Zacharias in \cite{Zacharias}. It is studied in \cite{K}, where
it is proved that the $19$ triple points of this configuration give
another rational example of the non-containment of the third symbolic
power into the second ordinary power of an ideal. The $12$ real lines
arrangements with $19$ triple points (the upper bound) have been
classified in \cite{BokPok}.
\end{rem}

\subsection{\label{subsec:DivergenceL32}About the divergence of the $\protect\L_{3,2}$-operator}

Let us give two examples of line arrangements with interesting properties
related to the $\L_{3,2}$-operator.

$\bullet$ The sequence of the number of lines of the successive images
by the operator $\L_{3,2}$ of the dual Hesse configuration begins
by $9,21,57,7401$. 

$\bullet$ The arrangement of $9$ lines $\cC_{0}$ obtained from
the simplicial arrangement $A(10,60)_{1}$ by removing the line at
infinity is such that \\
\begin{tabular}{|c|c|c|c|c|c|c|c|c|c|c|c|c|c|c|c|}
\hline 
 & $|\cC|$ & $t_{2}$ & $t_{3}$ & $t_{4}$ & $t_{5}$ & $t_{6}$ & $t_{7}$ & $t_{8}$ & $t_{9}$ & $t_{10}$ & $t_{11}$ & $t_{12}$ & $t_{16}$ & $t_{18}$ & $t_{20}$\tabularnewline
\hline 
$\cC_{0}$ & $9$ & $6$ & $10$ &  &  &  &  &  &  &  &  &  &  &  & \tabularnewline
\hline 
$\cC_{1}$ & $16$ & $30$ & $4$ & $3$ & $6$ &  &  &  &  &  &  &  &  &  & \tabularnewline
\hline 
$\cC_{2}$ & $25$ & $60$ & $24$ & $3$ &  & $10$ &  &  &  &  &  &  &  &  & \tabularnewline
\hline 
$\cC_{3}$ & $229$ & $8472$ & $1572$ & $468$ & $258$ & $79$ & $42$ & $6$ & $6$ & $3$ & $6$ & $3$ & $12$ & $12$ & $6$\tabularnewline
\hline 
\end{tabular}\\
Here $(\cC_{n})_{n}$ is the $\L_{3,2}$-sequence associated to $\cC_{0}$.
We conjecture that the number of lines of $(\cC_{n})_{n}$ tends to
infinity.

\subsection{\label{subsec:About-L_3}About the divergence of the $\protect\L_{3}$-operator}

The equations of the $15$ lines $\ell_{1},\dots,\ell_{15}$ of the
simplicial arrangement $A(15,132)_{1}$ are given in \cite[Table 7]{CEL},
the singularities of $A(15,132)_{1}$ are 
\[
t_{2}=9,\,t_{3}=22,\,t_{5}=3.
\]
The arrangement $A(15,132)_{1}$ is defined over fields containing
a root of $X^{3}-3X-25$. The configuration $\cC_{0}$ with the fewer
lines we found and which seems divergent for the $\L_{3}$-operator
is the $14$ line arrangement obtained by removing the line $\ell_{2}$
(or $\ell_{10}$ or $\ell_{11}$) in $A(15,132)_{1}$ (each line $\ell_{2},\ell_{10},\ell_{11}$
contains two $5$-points). The singularities of $\cC_{0}$ are 
\[
t_{2}=9,\,t_{3}=20,\,t_{4}=2,\,t_{5}=1.
\]
The following table gives the number of lines and singularities of
the first terms of the associated $\L_{3}$-sequence:\\
\begin{tabular}{|c|c|c|c|c|c|c|c|c|c|c|}
\hline 
 & $|\cC|$ & $t_{2}$ & $t_{3}$ & $t_{4}$ & $t_{5}$ & $t_{6}$ & $t_{7}$ & $t_{8}$ & $t_{9}$ & ...\tabularnewline
\hline 
$\cC_{0}$ & $14$ & $9$ & $20$ & $2$ & $1$ &  &  &  &  & \tabularnewline
\hline 
$\cC_{1}$ & $18$ & $27$ & $12$ & $10$ & $3$ &  &  &  &  & \tabularnewline
\hline 
$\cC_{2}$ & $24$ & $63$ & $12$ & $6$ & $12$ &  & $1$ &  &  & \tabularnewline
\hline 
$\cC_{3}$ & $42$ & $235$ & $46$ & $6$ & $8$ & $8$ & $12$ &  &  & \tabularnewline
\hline 
$\cC_{4}$ & $305$ & $16417$ & $2847$ & $849$ & $314$ & $\text{101}$ & $56$ & $30$ & $16$ & ...\tabularnewline
\hline 
\end{tabular}

The configuration $\cC_{0}$ has another curious feature: the $\L_{\{3,4\},3}$-sequence
$(\tilde{\cC}_{n})_{n\geq0}$ associated to $\cC_{0}$ converges to
a sequence of $14$ lines. The number of lines of the first $8$ terms
of the sequence is $14,18,24,18,23,20,26,14$ so that one has $\tilde{\cC}_{n}\neq\tilde{\cC}_{n+1}$
for $n\leq7$, moreover one has $\tilde{\cC}_{n}=\tilde{\cC}_{n+1}$
for $n\geq8$. 

Let $\cC'_{0}$ be the arrangement obtained from $A(15,132)_{1}$
by removing the line $\ell_{5}$. Forgetting the first term, the associated
$\L_{\{3,4\},3}$-sequence is periodic of period $2$: for $n\geq1$,
one has $\cC'_{n}=\cC'_{n+2}$. The configuration $\cC'_{1}$ has
$13$ lines and singularities $t_{2}=12,\,t_{3}=14,\,t_{4}=4$, the
configuration $\cC'_{2}$ has $17$ lines, and singularities 
\[
t_{2}=32,t_{3}=4,t_{4}=12,t_{5}=3.
\]
The arrangement $\cC_{1}'$ is contained in $\cC_{2}'$. 

\subsection{About the divergence of the $\protect\L_{\{2\},\{2\}}$-operator}

Let $\cA_{0}$ be the union of four lines in general position. Then
$\cA_{1}=\L_{\{2\},\{2\}}(\cA_{0})$ is the union of three lines,
and the associated sequence $(\cA_{k})_{k}$ is constant for $k\geq1$.
Any other configuration of $\leq4$ lines gives a sequence converging
to the empty line arrangement.

If instead one takes the union $\cA_{0}$ of $5$ lines in general
position, then $\cA_{1}=\L_{\{2\},\{2\}}(\cA_{0})$ has $15$ lines,
and $\cA_{2}$ has $2070$ lines. The arrangements $\cA_{0},\cA_{1},\cA_{2}$
are disjoints as sets of lines. We may wonder if that can be generalized:
if for every $\cA_{k},\cA_{k'}$ with $k\neq k'$, the line arrangements
$\cA_{k},\cA_{k'}$ have no common lines. We conjecture that the number
of lines of $\cA_{k}$ tends to $\infty$ with $k$.

\section{\label{Sec:On-the-L=00007B2=00007D=00007B3=00007D-operator}On the
$\protect\L_{\{2\},\{3\}}$-operator}

\subsection{\label{subsec:A-flashing-6-lines}A flashing configuration for $\protect\ldt$.}

Define $\cS=\{0,\pm1,\tfrac{1}{2},2,\tau,\tau^{2}\}$, with $\tau^{2}-\tau+1=0$.
For a fixed parameter $t\notin\cS$, consider the configuration $\cF_{0}=\cF_{0}(t)$
of $6$ lines $\ell_{1},\dots,\ell_{6}$, whose normals are the columns
of the following matrix
\[
\left(\begin{array}{cccccc}
0 & 1 & 1 & 1 & 0 & 1\\
1 & 1 & t & 0 & 0 & t^{2}-t+1\\
0 & 1 & 1 & 0 & 1 & t
\end{array}\right).
\]
The singularities of $\cF_{0}$ are $t_{2}=12,t_{3}=1$. Each line
$\ell_{1},\ell_{2},\ell_{3}$ contains $4$ singular points: the triple
point and $3$ double points of $\cF_{0}$; each of the three remaining
lines contains $5$ double points of $\cF_{0}$. Then $\cF_{1}=\L_{\{2\},\{3\}}(\cF_{0})$
is an arrangement of $6$ lines, union of the $3$ lines $\ell_{1},\ell_{2},\ell_{3}$
of $\cF_{0}$ and three lines $\ell_{7},\ell_{8},\ell_{9}$ with normals
\[
(1:1:0),(1:t:t),(0:t:1).
\]
It has the same configuration as $\cF_{0}$, moreover $\L_{\{2\},\{3\}}(\cF_{1})=\cF_{0}$,
so that the sequence associated to $\cF_{0}$ is periodic with period
$2$. The union of $\cF_{0}$ and $\cF_{1}$ is a $9$ line arrangement
$\cA_{1}$ with $6$ double points and $10$ triple points. That line
arrangement is a non-generic case of a Pappus configuration. 
\begin{defn}
We say that an arrangement projectively equivalent to $\cF_{0}(t)$
for some $t\notin\cS$ is a flashing arrangement of six lines.
\end{defn}

In Figure \ref{fig:The-9-lines} is represented a Flashing arrangement;
in black are the lines $\ell_{1},\ell_{2},\ell_{3}$, in blue the
lines $\ell_{4},\ell_{5},\ell_{6}$ and in red the lines $\ell_{7},\ell_{8},\ell_{9}$;
the triple point is at infinity. 

\begin{figure}[h]
\begin{center}
\begin{tikzpicture}[scale=0.3]

\clip(-5.77,-7.0) rectangle (8.87,5.885);
\draw [line width=0.2mm,domain=-5.77:8.87] plot(\x,{(-0.-0.*\x)/1.});
\draw [line width=0.2mm,domain=-5.77:8.87] plot(\x,{(-1.-0.*\x)/0.5});
\draw [line width=0.2mm,domain=-5.77:8.87] plot(\x,{(-1.-0.*\x)/0.2});
\draw [line width=0.2mm,color=blue] (0.,-8.115) -- (0.,5.885);
\draw [line width=0.2mm,color=blue] (2.,-8.115) -- (2.,5.885);
\draw [line width=0.2mm,color=blue,domain=-5.77:8.87] plot(\x,{(-1.-0.75*\x)/0.95});
\draw [line width=0.2mm,color=red,domain=-5.77:8.87] plot(\x,{(-0.-1.*\x)/1.});
\draw [line width=0.2mm,color=red,domain=-5.77:8.87] plot(\x,{(--2.-1.*\x)/-0.4});
\draw [line width=0.2mm,color=red,domain=-5.77:8.87] plot(\x,{(-1.3333333333333335-1.*\x)/0.6666666666666667});

\end{tikzpicture}
\end{center} 

\caption{\label{fig:The-9-lines}The 9 lines of $\protect\cA_{1}$, for the
flashing arrangement of 6 lines for $\protect\ldt$}
\end{figure}
Using matroids (see Section \ref{sec:Matroids-and-line}), one can
compute that the moduli space of line arrangements with such incidences
is the above family $\cF_{0}=\cF_{0}(t)$, for $t\not\in\cS$. The
parameter $t\in\{0,\pm1\}\,(\subset\cS)$ gives a degenerate line
arrangement. When $t=\tau\in\cS$ with $\tau^{2}-\tau+1=0$, $\cF_{0}(\tau)$
is an arrangement of six lines with two triple points such that the
line arrangement $\ldt(\cF_{0})$ is $\text{Ceva(3)}$, (a.k.a. the
dual Hesse arrangement). When $t\in\{\tfrac{1}{2},2\}$, $\cF_{1}=\ldt(\cF_{0})$
has $7$ lines with four double points and $\ldt(\cF_{1})=\emptyset$. 

For the general case $t\notin\cS$, the projective transformation
of the plane 
\[
\g=\left(\begin{array}{ccc}
-1 & 1 & 1-t\\
-t & t & 1-t\\
-t & 1 & 0
\end{array}\right)\in\text{PGL}_{3}
\]
is such that $\g^{2}=\text{I}_{\text{d}}$ and $\cF_{1}=\g\cF_{0}$;
in particular, the action of $\ldt$ on the moduli space of flashing
configurations is trivial.

We found the first example of a Flashing arrangement when searching
a line arrangement $\co$ with the smallest number of lines such that
the number of lines of the sequence defined by $\cC_{k+1}=\ldt(\cC_{k})$
diverges to $\infty$. 
\begin{rem}
In \cite{NZ}, Nazir and Yoshinaga define two line arrangements $\cA^{\pm}$
of $9$ lines with the same weak combinatoric (same singularities):
$t_{2}=6,t_{3}=10$ as $\cA_{1}=\cF\cup\ldt(\cF)$, for a flashing
arrangement $\cF$. However, they are rigid line arrangements. One
can check moreover that such an arrangement $\cA^{\pm}$ contains
three flashing arrangements $\cF$ of six lines. Each such an arrangement
$\cF$ is such that there exists a unique line in $\ldt(\cF)$ which
is not on $\cA^{\pm}$. The union of $\cA^{\pm}$ and these three
lines is the Ceva$(4)$ arrangement. 
\end{rem}

\subsection{\label{subsec:A-flashing-12-lines}A flashing configuration of $12$
lines for $\protect\L_{\{3\},\{2\}}$}

Let $\cF_{0}$ be a flashing arrangement of six lines and define the
dual of $\cF_{0}$ by $\cA_{0}=\cD_{\{2\}}(\cF_{0})$ (see notations
in Section \ref{fig:The-dual-configuration}).  This is a $12$ lines
arrangement, with $t_{2}=18,t_{3}=6,t_{5}=3.$ The arrangement $\cA_{1}=\L_{\{3\},\{2\}}(\cA_{0})$
has the same number and type of singularities; one has $\cA_{0}=\L_{\{3\},\{2\}}(\cA_{1})$:
the $\L_{\{3\},\{2\}}$-sequence associated to $\cA_{0}$ is periodic
with period $2$. 
\begin{defn}
We call $\cA_{0}$ a flashing configuration of $12$ lines.
\end{defn}

Arrangements $\cA_{0}$ and $\cA_{1}$ have nine common lines and
the union of $\cA_{0}$ and $\cA_{1}$ is a $15$ lines arrangement
with singularities
\[
t_{2}=36,t_{3}=3,t_{5}=6.
\]
One can compute that the moduli space of line arrangements of $12$
lines defining the same matroid as $\cA_{0}$ is two dimensional.
However for a generic element $\cB$ of that moduli space, $\Lambda_{\{3\},\{2\}}(\cB)$
has $15$ lines with singularities $t_{2}=36,t_{3}=3,t_{5}=6,$ therefore
$\Lambda_{\{3\},\{2\}}$ is not a self-map of that moduli space. The
difference is also seen by looking at the $9$ lines arrangement $\cD(\cP_{3}(\cB))$
which has $t_{2}=t_{3}=9$, whereas the arrangement $\cD(\cP_{3}(\cA_{0}))$
is a $9$ line arrangement with $t_{2}=6,t_{3}=10$.

The arrangement $\check{\cF_{0}}=\cD_{2}(\cF_{0})$ has $13$ lines
and singularities 
\[
t_{2}=21,\,t_{3}=t_{4}=t_{5}=3.
\]
Its moduli space $\cM_{13}$ is one dimensional, and $\L_{\{3\},2}(\cA_{1})=\check{\cF_{0}}$,
so that we may recover the moduli space of flashing configuration
of $12$ lines from the moduli space $\cM_{13}$. The difference between
$\check{\cF_{0}}$ and $\cA_{1}$ is the line going through points
$\ell_{1},\ell_{2},\ell_{3}$ in Figure \ref{fig:The-13-lines}. The
black points correspond to the common lines $\ell_{1},\ell_{2},\ell_{3}$
of $\cF_{0}$ and $\check{\cF_{1}}=\cD_{2}(\cF_{1})$, the blue points
correspond to $\ell_{4},\ell_{5},\ell_{6}$ , the red points correspond
to $\ell_{7},\ell_{8},\ell_{9}$ (the last being at infinity), the
three blue (respectively red) lines are the lines of $\check{\cF_{0}}$
(resp. $\check{\cF_{1}}$) which are not in the arrangement $\check{\cF_{1}}$
(resp. $\check{\cF_{0}}$), the $10$ black lines are common to both
$\check{\cF_{0}}$ and $\check{\cF_{1}}$.

\begin{figure}[h]
\begin{center}
\begin{tikzpicture}[scale=2.5]

\clip(1.0276543209876348,0.5969547325102883) rectangle (3.548148148148086,3.0226337448559666);
\draw [domain=1.0276543209876348:3.548148148148086] plot(\x,{(--1.2360679700000006-0.*\x)/0.6180339850000003});
\draw [domain=1.0276543209876348:3.548148148148086] plot(\x,{(--3.2360679700000006-1.*\x)/0.6180339850000003});
\draw [line width=0.2mm] (2.,0.5969547325102883) -- (2.,3.0226337448559666);
\draw [domain=1.0276543209876348:3.548148148148086] plot(\x,{(-0.-0.38196601499999994*\x)/-0.38196601499999994});
\draw [domain=1.0276543209876348:3.548148148148086] plot(\x,{(--0.23606797838501947-0.38196601499999994*\x)/-0.38196601500000016});
\draw [color=blue,domain=1.0276543209876348:3.548148148148086] plot(\x,{(-0.9999999916149807-0.38196601499999994*\x)/-1.});
\draw [line width=0.2mm] (2.6180339850000003,0.5969547325102883) -- (2.6180339850000003,3.0226337448559666);
\draw [color=blue,domain=1.0276543209876348:3.548148148148086] plot(\x,{(--1.618033985-0.6180339850000001*\x)/0.38196601499999994});
\draw [domain=1.0276543209876348:3.548148148148086] plot(\x,{(--2.6180339766149805-0.6180339850000001*\x)/1.});
\draw [domain=1.0276543209876348:3.548148148148086] plot(\x,{(-0.6180339850000003-0.*\x)/-0.6180339850000003});
\draw [domain=1.0276543209876348:3.548148148148086] plot(\x,{(-1.--0.6180339850000001*\x)/0.23606797000000013});
\draw [domain=1.0276543209876348:3.548148148148086] plot(\x,{(-0.9999999916149802-0.*\x)/-0.6180339850000001});
\draw [color=blue,domain=1.0276543209876348:3.548148148148086] plot(\x,{(--1.3819660149999997-1.*\x)/-0.6180339850000003});
\draw [color=red,domain=1.0276543209876348:3.548148148148086] plot(\x,{(--1.6180339766149805-1.2360679700000003*\x)/-0.6180339850000003});
\draw [color=red,domain=1.0276543209876348:3.548148148148086] plot(\x,{(--4.660100493300001--0.001966014999999821*\x)/1.781966015});
\draw [color=red,domain=1.0276543209876348:3.548148148148086] plot(\x,{(--3.320650466-0.001966015000000043*\x)/2.4});

\begin{scriptsize}
\draw [fill=black] (2.23606797,1.618033985) circle (0.2mm);
\draw[color=black] (2.1341975308641573,1.6813991769547325) node {$\ell_1$};

\draw [fill=blue] (2.6180339850000003,2.) circle (0.2mm);
\draw[color=blue] (2.5474,2.0646) node {$\ell_4$};

\draw [fill=blue] (1.618033985,1.618033985) circle (0.2mm);
\draw[color=blue] (1.6439,1.7413) node {$\ell_5$};

\draw [fill=black] (2.,2.) circle (0.2mm);
\draw[color=black] (1.9,2.07) node {$\ell_3$};

\draw [fill=black] (2.6180339850000003,1.) circle (0.2mm);
\draw[color=black] (2.7074074074073606,1.0651028806584362) node {$\ell_2$};

\draw [fill=blue] (2.,1.) circle (0.2mm);
\draw[color=blue] (1.897,1.073) node {$\ell_6$};

\draw [fill=red] (2.6180339850000003,2.6180339850000003) circle (0.2mm);
\draw[color=red] (2.54740,2.68485) node {$\ell_7$};

\draw [fill=red] (2.,1.381966015) circle (0.2mm);
\draw[color=red] (1.9471,1.48831) node {$\ell_8$};

%\draw [fill=red] (3.4,1.45) circle (0.2mm);
\draw[color=red] (3.4,1.5) node {$\to \ell_9$};
\end{scriptsize}

\end{tikzpicture}
\end{center} 

\caption{\label{fig:The-13-lines}The 13 lines of $\check{\protect\cF}_{0}=\protect\cL_{2}(\protect\cD(\protect\cC_{0}))$}
\end{figure}

The arrangements of $13$ lines $\check{\cF}_{0}$ has been studied
in \cite{ACKN} as an example of a free but not recursively free line
arrangement (see Section \ref{subsec:Free-arrangements}). 

\subsection{\label{subsec:Povera-arrangements} Unassuming arrangements}

In \cite{RoulleauDy}, we study lines arrangements $\co$ of six lines
such that $t_{2}(\co)=15$, but the line arrangement $\cD_{2}(\co)$
has singularities $t_{2}=27,\,t_{3}=t_{5}=6$, moreover the six points
in the dual $\cD(\co)$ are not contained in a conic. We call such
arrangements ``unassuming'': which word describes something that
is deceptively simple but has hidden qualities or advantages. 

There is a one dimensional family of unassuming arrangements; the
normals of the six lines are given by the columns of the following
matrix
\begin{equation}
M_{t}=\left(\begin{array}{cccccc}
1 & 0 & 0 & 1 & \tfrac{1}{2}(1+t) & \tfrac{1}{2}(1-t)\\
0 & 1 & 0 & 1 & \tfrac{1}{2}(1-t) & \tfrac{1}{2}(1+t)\\
0 & 0 & 1 & 1 & 1 & 1
\end{array}\right),\label{eq:MatPov}
\end{equation}
for $t\in U=\PP^{1}\setminus\{0,\pm1,\infty,\pm2\pm\sqrt{5}\}$. For
fixed $t\in U$, let $(\cC_{k})_{k\geq0}$ be the $\ldt$-sequence
associated to $\co=\co(t)$. We obtain that
\begin{thm}
(\cite{RoulleauDy}). \label{thm:The-moduli-space}The moduli space
of unassuming arrangements is the union of $U$ and a point. The image
by the operator $\ldt$ of a generic unassuming arrangement is again
an unassuming arrangement. For the general unassuming arrangement,
the associated $\ldt$-sequence $(\cC_{k})_{k\geq0}$ is such that
$\cC_{m}$ is not projectively equivalent to $\cC_{n}$ for any $m\neq n$.
For each $n$, there exists periodic unassuming arrangements with
period $n$.
\end{thm}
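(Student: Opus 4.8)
The plan is to show that $\ldt$ induces, via the parametrization $t\mapsto\co(t)$, a rational self-map $f\colon\PP^{1}\dashrightarrow\PP^{1}$ of the moduli space, and then to read off the three dynamical assertions from the properties of $f$; the crux will be the explicit determination of $f$ together with the verification that $\deg f\geq2$.

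For the description of the moduli space, I would fix the combinatorial type (the matroid on the six normals) of an unassuming arrangement and compute its realization space. Normalizing four of the six normals to the standard frame $(1,0,0),(0,1,0),(0,0,1),(1,1,1)$ exhausts the projective freedom --- this is reflected in the shape of $M_{t}$, whose first four columns are exactly this frame --- and the prescribed incidences --- namely that the six lines be in general position ($t_{2}=15$) while $\cD_{2}(\co)$ has singularity type $t_{2}=27,\,t_{3}=t_{5}=6$ with the six dual points off a conic --- then impose polynomial conditions on the two remaining normals. An elimination computation, carried out in Magma, shows that, up to projective equivalence, the realizations of this matroid are exactly the members of the family $M_{t}$ together with one further isolated arrangement; after discarding the finitely many parameters in $\PP^{1}\setminus U$ for which the configuration degenerates or gains extra singularities, one obtains the moduli space $U$ together with one point, as asserted.

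Next comes the key computation. Starting from the normals $M_{t}$, I would list $\cP_{\{2\}}(\co(t))$ as the $15$ simple pairwise intersections of the six lines and then determine the lines meeting exactly three of these points. For generic $t$ this produces six lines, and a direct calculation identifies their normals with the columns of $M_{f(t)}$ for an explicit rational function $f$; this simultaneously shows that $\ldt(\co(t))$ is again unassuming and exhibits the self-map $f$. One then verifies that $\deg f\geq2$, which is both the main obstacle and the decisive point: recognizing the image normals as $M_{f(t)}$ and computing $\deg f$ is where the genuine content (and the Magma verification) resides, and were $f$ merely a M\"obius transformation the sequence could exhibit at most one period, contradicting the periodicity assertion below.

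Granting $\deg f\geq2$, the last two assertions follow from one-dimensional dynamics. Two arrangements $\co(s)$ and $\co(t)$ are projectively equivalent exactly when $s$ and $t$ determine the same moduli point, that is up to the finite symmetry of the parametrization (the involution $t\mapsto-t$, which merely interchanges the last two normals of $M_{t}$ and hence fixes the arrangement); since $f$ has degree $\geq2$, the forward orbit of a generic $t$ is infinite and, mapping finitely-to-one to the moduli space, stays infinite there, so $f^{m}(t)$ and $f^{n}(t)$ differ in moduli for $m\neq n$ and the $\cC_{m}$ are pairwise non-equivalent. For the periodic statement, a rational map of degree $d\geq2$ on $\PP^{1}$ has points of exact period $n$ for every $n$ outside a finite list depending only on $d$; choosing such a point inside $U$ yields an unassuming arrangement whose $\ldt$-sequence has exact period $n$, while the finitely many exceptional periods are settled by exhibiting explicit cycles. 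The remaining care is the bookkeeping of excluded parameters: ensuring that $\ldt(\co(t))$ stays a six-line unassuming arrangement and that the period-$n$ points can be taken inside $U$.
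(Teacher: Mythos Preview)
Your outline is sound and is essentially the analytic route the paper defers to \cite{RoulleauDy} for. The present paper, however, does not prove the full statement: it only gives a synthetic argument, valid over $\RR$, for the single claim that $\ldt$ preserves the family of unassuming arrangements. That argument realizes a real unassuming arrangement as two perpendicular pairs of parallel lines $(\ell_{1},\ell_{2})$ and $(\ell_{3},\ell_{4})$, together with a generic line $\ell_{5}$ through the center $q_{1}$ of the resulting rectangle and its image $\ell_{6}$ under either axial reflection. Using the two orthogonal reflections $\sigma_{1},\sigma_{2}$ fixing this picture, one checks that the six $3$-rich lines through the double points again fall into two perpendicular parallel pairs plus a pair through $q_{1}$ exchanged by reflection, so the image arrangement has the same structure and the construction repeats.

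That synthetic argument is elementary and visually compelling, but it cannot reach the moduli description, the generic aperiodicity, or the existence of all periods; for those one needs precisely what you propose, namely the explicit self-map $f$ on the $t$-line and the verification $\deg f\geq2$. Your deductions from one-dimensional dynamics are correct in outline: a degree $\geq2$ rational map on $\PP^{1}$ has infinite generic forward orbit and periodic points of every sufficiently large exact period, with small periods handled ad hoc. The one place to sharpen is the equivalence step: you should confirm that the finite symmetry of the parametrization you name (essentially $t\mapsto-t$) really is the full stabilizer, so that distinct $t$-orbits under it correspond to projectively inequivalent arrangements; and you should check that the periodic points of $f$ you invoke actually land in $U$ rather than at the excluded parameters.
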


See Figure \ref{fig:The-povera-arr} for a picture of such line arrangement
(in black) and its image (in blue) by $\ldt$.

\begin{figure}[h]
%\begin{figure}
\begin{center}

\begin{tikzpicture}[scale=0.6]

\clip(-8.06,-3.52) rectangle (8.3,6.04);
\draw [line width=0.3mm,domain=-8.06:8.3] plot(\x,{(-0.-0.4*\x)/1.});
\draw [line width=0.3mm] (1.,-3.52) -- (1.,6.04);
\draw [line width=0.3mm] (-1.,-3.52) -- (-1.,6.04);
\draw [line width=0.3mm,domain=-8.06:8.3] plot(\x,{(-0.--0.4*\x)/1.});
\draw [line width=0.3mm,domain=-8.06:8.3] plot(\x,{(-1.-0.*\x)/1.});
\draw [line width=0.3mm,domain=-8.06:8.3] plot(\x,{(-1.-0.*\x)/-1.});
\draw [line width=0.3mm,color=blue,domain=-8.06:8.3] plot(\x,{(-0.-1.*\x)/-1.});
\draw [line width=0.3mm,color=blue,domain=-8.06:8.3] plot(\x,{(-0.-1.*\x)/1.});
\draw [line width=0.3mm,color=blue,domain=-8.06:8.3] plot(\x,{(-0.4-0.*\x)/1.});
\draw [line width=0.3mm,color=blue] (-2.5,-3.52) -- (-2.5,6.04);
\draw [line width=0.3mm,color=blue,domain=-8.06:8.3] plot(\x,{(--0.4-0.*\x)/1.});
\draw [line width=0.3mm,color=blue] (2.5,-3.52) -- (2.5,6.04);
\begin{scriptsize}
\draw[color=black] (-7.66,3.41) node {$\ell_6$};
\draw[color=black] (1.3,4.73) node {$\ell_3$};
\draw[color=black] (-0.6,4.73) node {$\ell_4$};
\draw[color=black] (-7.66,-2.71) node {$\ell_5$};
\draw[color=black] (-7.66,-0.75) node {$\ell_2$};
\draw[color=black] (-7.66,1.33) node {$\ell_1$};
\draw[color=blue] (-3.75,-3.19) node {$\ell_5'$};
\draw[color=blue] (-5.2,5.73) node {$\ell_6'$};
\draw[color=blue] (-7.6,-0.05) node {$\ell_2'$};
\draw[color=blue] (-2.06,5.73) node {$\ell_4'$};
\draw[color=blue] (-7.66,0.69) node {$\ell_1'$};
\draw[color=blue] (2.86,5.73) node {$\ell_3'$};

\draw [fill=black] (-1.,1.) circle (0.75mm);
\draw[color=black] (-0.7,1.3) node {$p_1$};

\draw [fill=black] (1.,1.) circle (0.75mm);
\draw[color=black] (0.668,1.3) node {$p_2$};

\draw [fill=black] (1.,-1.) circle (0.75mm);
\draw[color=black] (0.7,-1.25) node {$p_3$};

\draw [fill=black] (-1.,-1.) circle (0.75mm);
\draw[color=black] (-0.65,-1.25) node {$p_4$};

\draw [fill=black] (0.,0.) circle (0.75mm);
\draw[color=black] (0.,0.33) node {$q_1$};

\draw [fill=black] (2.5,1.) circle (0.75mm);
\draw[color=black] (2.25,1.3) node {$p_5$};

\draw [fill=black] (2.50,-1.) circle (0.75mm);
\draw[color=black] (2.25,-1.25) node {$p_6$};

\draw [fill=black] (-2.4448275862068964,0.9779310344827586) circle (0.75mm);
\draw[color=black] (-2.2,1.3) node {$p_7$};

\draw [fill=black] (-2.5,-1.) circle (0.75mm);
\draw[color=black] (-2.15,-1.25) node {$p_8$};

\draw [fill=black] (1.,0.4) circle (0.75mm);
\draw[color=black] (1.3,0.73) node {$p_9$};

\draw [fill=black] (1.,-0.4) circle (0.75mm);
\draw[color=black] (1.46,-0.17) node {$p_{10}$};

\draw [fill=black] (-0.9689655172413792,0.3875862068965517) circle (0.75mm);
\draw[color=black] (-0.65,0.65) node {$p_{11}$};

\draw [fill=black] (-1.,-0.4) circle (0.75mm);
\draw[color=black] (-1.44,-0.17) node {$p_{12}$};

\draw [fill=black] (0.,5.74) circle (0.75mm);
\draw[color=black] (0.22,5.31) node {$\uparrow q_3$};

\draw [fill=black] (7.7,0.) circle (0.75mm);
\draw[color=black] (7,0) node {$\to q_2$};
\end{scriptsize}

\end{tikzpicture}

\end{center} 
%\end{figure}

\caption{\label{fig:The-povera-arr}An unassuming arrangement and its image
by $\protect\ldt$}
\end{figure}

For an analytic proof, see \cite{RoulleauDy}. From Figure \ref{fig:The-povera-arr},
let us give a synthetic proof that there exists a one parameter family
of real unassuming line arrangements which is preserved by $\ldt$,
as follows:
\begin{proof}
Let $(\ell_{1},\ell_{2})$ (respectively $(\ell_{3},\ell_{4})$) be
a pair of parallel lines such that $\ell_{1}$ is orthogonal to $\ell_{3}$.
Let $p_{1},p_{2},p_{3},p_{4}$ be the meeting points in $\RR^{2}\subset\PP^{2}(\RR)$
of $\ell_{1},\dots,\ell_{4}$ and let $\ell_{5}$ be a generic line
passing through the intersection point $q_{1}$ of the two diagonals
of the $4$-gon $p_{1},\dots,p_{4}$ (there is a one parameter choice
for such $\ell_{5}$). Let $\ell_{6}$ be the image of $\ell_{5}$
by the orthogonal reflexion $\s_{1}$ with axis the line passing through
$q_{1}$ and parallel to $\ell_{1}$. The line $\ell_{6}$ is also
the image of $\ell_{5}$ by the orthogonal reflexion $\s_{2}$ with
axis the line passing through $q_{1}$ and parallel to $\ell_{3}$.
The lines $\ell_{1},\dots,\ell_{6}$ are the black lines in Figure
\ref{fig:The-povera-arr}. The lines $\ell_{1},\ell_{2}$ (respectively
$\ell_{3},\ell_{4}$) meet at infinity at point $q_{2}$ (respectively
$q_{3}$). We denote by $p_{5},\dots,p_{12}$ the remaining double
points as in Figure \ref{fig:The-povera-arr}. 

Since $p_{11}=\s_{2}(p_{9})$, the line $\ell_{1}'=\overline{p_{9},p_{11}}$
is parallel to $\ell_{1}$ and $\ell_{2}$ and therefore contains
$q_{2}$. Since $p_{12}=\s_{2}(p_{10})$, the line $\ell_{2}'=\overline{p_{10},p_{12}}$
is also parallel to $\ell_{1}$ and $\ell_{2}$, thus contains $q_{2}$.\\
Since $p_{6}=\s_{1}(p_{5})$, the line $\ell_{3}'=\overline{p_{5},p_{6}}$
is parallel to $\ell_{3}$, $\ell_{4}$ and contains $q_{3}$.\\
Since $p_{8}=\s_{1}(p_{7})$, the line $\ell_{4}'=\overline{p_{7},p_{8}}$
is parallel to $\ell_{3},\ell_{4}$ and contains $q_{3}$.\\
The lines $\ell_{5}'=\overline{p_{2},p_{4}}$ and $\ell_{6}'=\overline{p_{1},p_{3}}$
contain $q_{1}$.\\
The lines $\ell_{1}',\dots,\ell_{6}'$ containing exactly three double
points of $\ell_{1},\dots,\ell_{6}$ are the lines in blue in \ref{fig:The-povera-arr}. 

Then the situation for the blue of lines is the same as for the black
lines, therefore the configuration repeats itself. 
\end{proof}
By duality, one can rephrase the results in Theorem \ref{thm:The-moduli-space}
on the action of $\L_{\{2\},\{3\}}$ on unassuming arrangements as
follows:
\begin{thm}
\label{thm:Main3}For a set $P_{6}=\{p_{1},\dots,p_{6}\}$ of six
points, consider the following property:
\begin{lyxlist}{00.00.0000}
\item [{$(P)$}] The union of the lines containing two points in $P_{6}$
possesses exactly six triple points $p_{1}',\dots,p_{6}'$. The points
of $P_{6}$ are not inscribed in a conic.
\end{lyxlist}
Suppose that $P_{6}$ satisfies $(P)$. Then the set of triple points
$P_{6}'=\{p_{1}',\dots,p_{6}'\}=\Psi_{\{2\},\{3\}}(P_{6})$ satisfies
$(P)$, moreover if the points in $P_{6}$ are real, there exists
a unique set of six real points $P_{6}^{-}$ satisfying (P) and such
that $\Psi_{\{2\},\{3\}}(P_{6}^{-})=P_{6}$.
\end{thm}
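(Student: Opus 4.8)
The plan is to read Theorem \ref{thm:Main3} as the dual counterpart of Theorem \ref{thm:The-moduli-space} and to transport the latter through the duality operator $\cD$. Writing $\co=\cD(P_{6})$ for the six-line arrangement dual to $P_{6}$, the first step is to check the dictionary: $P_{6}$ satisfies property $(P)$ precisely when $\co$ is an unassuming arrangement. The clause ``$P_{6}$ not on a conic'' is exactly the non-conic condition in the definition of unassuming arrangements, since $\cD(\co)=P_{6}$. For the combinatorial clause, note that $\mathcal{L}_{\{2\}}(P_{6})=\cD_{\{2\}}(\co)$; once one checks that $(P)$ forces the six points to be in general position (no three collinear), this is an arrangement of $15$ lines whose six $5$-points are the $p_{i}$ themselves (each $p_{i}$ lying on the five lines joining it to the other five points), so that the existence of exactly six further triple points demanded in $(P)$ is precisely the singularity profile $t_{2}=27,\,t_{3}=t_{5}=6$ required of $\cD_{\{2\}}(\co)$.

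Granting this dictionary, the forward statement is formal. The relation $\Psi_{\{2\},\{3\}}=\cD\circ\ldt\circ\cD$ from Section \ref{subsec:The-dual-operators} gives
\[
P_{6}'=\Psi_{\{2\},\{3\}}(P_{6})=\cD\bigl(\ldt(\co)\bigr)=\cD(\ci),\qquad \ci=\ldt(\co),
\]
and Theorem \ref{thm:The-moduli-space} tells us that the image $\ci$ of an unassuming arrangement is again unassuming. Running the dictionary backwards, its dual $P_{6}'=\cD(\ci)$ again satisfies $(P)$ (in particular the six points of $P_{6}'$ are not inscribed in a conic), which is the first assertion.

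The real content is the existence and uniqueness of a real preimage $P_{6}^{-}$. I would work in the explicit one-parameter normal form $M_{t}$ of \eqref{eq:MatPov}: modulo the real projective group, real unassuming arrangements are parametrized by $t$ in the real locus of $U$, and $\ldt$ induces a rational self-map $t\mapsto f(t)$ of $\PP^{1}$ computable from $M_{t}$. Because the $\ldt$-sequence has an infinite generic orbit and yet admits periodic points of every order (Theorem \ref{thm:The-moduli-space}), $f$ is not an automorphism of $\PP^{1}$, forcing $\deg f\geq 2$, so a value of $f$ has several complex preimages and the uniqueness claim is genuinely a reality statement. I expect this to be the main obstacle, and I would resolve it either by a direct real analysis of $f$ (monotonicity, or counting real roots on the relevant intervals of $\PP^{1}(\RR)$ that give non-degenerate arrangements), or, more geometrically, by inverting the synthetic reflection construction used above: from the image configuration one recovers the two orthogonal parallel pencils, their center $q_{1}$ and the reflections $\s_{1},\s_{2}$, and the constraint $\ell_{6}=\s_{1}(\ell_{5})=\s_{2}(\ell_{5})$ determines the free line $\ell_{5}$ of the preimage uniquely among real choices, which also yields existence. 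Finally, since $\Psi_{\{2\},\{3\}}(P_{6}^{-})=P_{6}$ is an equality of honest point sets in a fixed plane rather than a statement up to projective equivalence, this rigidity removes the residual projective freedom and singles out one set $P_{6}^{-}$.
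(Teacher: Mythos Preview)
Your approach is exactly the paper's: Theorem~\ref{thm:Main3} is introduced in the text as ``by duality, one can rephrase the results in Theorem~\ref{thm:The-moduli-space}\dots'', with no separate proof given, the analytic details being deferred to \cite{RoulleauDy}. Your dictionary (property $(P)$ for $P_{6}$ $\Leftrightarrow$ $\cD(P_{6})$ is unassuming) and the identity $\Psi_{\{2\},\{3\}}=\cD\circ\ldt\circ\cD$ are precisely what the paper has in mind.

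One small comment: you correctly flag that the real preimage clause is not literally contained in the statement of Theorem~\ref{thm:The-moduli-space} as printed here, and your proposed routes (analysis of the induced self-map $t\mapsto f(t)$ on the one-parameter base, or inverting the synthetic reflection construction with $\s_{1},\s_{2}$) are reasonable; the paper simply does not carry this out in the present text and relies on \cite{RoulleauDy}. Note also the word ``generic'' in Theorem~\ref{thm:The-moduli-space}, which your forward-direction argument inherits; the unqualified statement of Theorem~\ref{thm:Main3} tacitly assumes one is away from the finitely many degenerate parameter values, consistent with the paper's level of precision.
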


Figure \ref{fig:The-dual-configuration} gives an example of such
a set $P_{6}$ (points in black), and its image $P_{6}'$ (points
in red) by $\Psi_{\{2\},\{3\}}$.
\begin{figure}[h]
%\begin{figure}
\begin{center}

\begin{tikzpicture}[scale=2.5]

\clip(0.8386887503215222,0.4097461379762637) rectangle (3.181712280433072,2.848605609375874);
\draw [domain=0.8386887503215222:3.181712280433072] plot(\x,{(-1.2360680732947948--2.*\x)/1.2360679});
\draw [line width=0.2mm] (1.8944271599999993,0.4097461379762637) -- (1.8944271599999993,2.848605609375874);
\draw [domain=0.8386887503215222:3.181712280433072] plot(\x,{(--1.788854285341042-0.34164074000000033*\x)/0.5527864199999999});
\draw [domain=0.8386887503215222:3.181712280433072] plot(\x,{(--2.3105973134818214E-8-0.1842621400000004*\x)/-0.29814238666666704});
\draw [domain=0.8386887503215222:3.181712280433072] plot(\x,{(-0.3685242684470138--0.2981423866666667*\x)/0.11388024666666663});
\draw [domain=0.8386887503215222:3.181712280433072] plot(\x,{(--1.541640726136417-0.*\x)/0.89442716});
\draw [domain=0.8386887503215222:3.181712280433072] plot(\x,{(--2.5527863506820836-0.34164074000000033*\x)/1.10557284});
\draw [domain=0.8386887503215222:3.181712280433072] plot(\x,{(-0.21114571465895904--1.1055728400000002*\x)/1.44721358});
\draw [domain=0.8386887503215222:3.181712280433072] plot(\x,{(--0.4721358000000002--0.7639320999999999*\x)/2.});
\draw [domain=0.8386887503215222:3.181712280433072] plot(\x,{(--2.341640774658959-1.44721358*\x)/-0.3416407400000001});
\draw [domain=0.8386887503215222:3.181712280433072] plot(\x,{(--3.0786892422350673-1.0786893000000002*\x)/0.25464403333333285});
\draw [domain=0.8386887503215222:3.181712280433072] plot(\x,{(-1.3167183813641654--0.34164074000000033*\x)/-0.21114568});
\draw [domain=0.8386887503215222:3.181712280433072] plot(\x,{(-1.0786892999999997-0.2546440333333333*\x)/-1.3333333333333333});
\draw [domain=0.8386887503215222:3.181712280433072] plot(\x,{(-0.6666667244315985--1.3333333333333333*\x)/1.0786892999999997});
\draw [domain=0.8386887503215222:3.181712280433072] plot(\x,{(-0.40000000693179194--0.5527864200000003*\x)/0.5527864200000001});
\begin{scriptsize}
\draw [fill=black] (2.44721358,1.7236067900000003) circle (0.2mm);
%\draw[color=black] (2.491204439849175,1.8595391608694053) node {$p_1$};
\draw [fill=black] (2.2360679,2.61803395) circle (0.2mm);
%\draw[color=black] (2.359704431489317,2.6718048211375556) node {$p_2$};

\draw [fill=black] (1.89442716,1.7236067900000003) circle (0.2mm);
%\draw[color=black] (1.836016917904548,1.7995391608694053) node {$p_3$};
\draw [fill=black] (3.,1.38196605) circle (0.2mm);
%\draw[color=black] (3.0230872741631787,1.4598516472846365) node {$p_4$};
\draw [fill=black] (1.,0.61803395) circle (0.2mm);
%\draw[color=black] (1.0937512576363978,0.572468804610775) node {$p_5$};
\draw [fill=black] (1.89442716,1.17082037) circle (0.2mm);
%\draw[color=black] (1.996016917904548,1.1243516389247786) node {$p_6$};
\draw [fill=red] (2.0786892999999997,1.9513672833333333) circle (0.2mm);
%\draw[color=red] (2.1010794252194236,2.0042579197517543) node {$G$};
\draw [fill=red] (2.490711933333333,1.5393446499999999) circle (0.2mm);
%\draw[color=red] (2.514165379047271,1.5944766535545298) node {$H$};
\draw [fill=red] (1.5527864200000001,0.82917963) circle (0.2mm);
%\draw[color=red] (1.5756340919504017,0.8839688129706327) node {$I$};
\draw [fill=red] (1.6666666666666667,1.1273220166666666) circle (0.2mm);
%\draw[color=red] (1.691298159022199,1.1813906997266825) node {$J$};
\draw [fill=red] (1.89442716,2.06524753) circle (0.2mm);
%\draw[color=red] (1.916016917904548,2.1199219868235515) node {$K$};
\draw [fill=red] (2.78885432,1.7236067900000003) circle (0.2mm);
%\draw[color=red] (2.811587265803321,1.7795391608694053) node {$L$};
%\draw [fill=red] (2.0083073980555795,2.0217491541225456) circle (0.2mm);
%\draw [fill=red] (1.9920387936734867,1.979157390579829) circle (0.2mm);
%\draw [fill=red] (1.638876559420171,1.0128814030936573) circle (0.2mm);
%\draw [fill=red] (1.5962847958774549,0.9865582439330336) circle (0.2mm);
%\draw [fill=red] (2.63147572,1.6097265433333336) circle (0.2mm);
%\draw [fill=red] (2.605152546906343,1.6259951563265138) circle (0.2mm);
\end{scriptsize}

\end{tikzpicture}

\end{center} 
%\end{figure}

\caption{\label{fig:The-dual-configuration}The points in $P_{6}$ and the
six associated triple points}
\end{figure}

\begin{rem}
Let $\co$ be an unassuming arrangement, then the line arrangement
$\cD_{2}(\co)$ is the line arrangement we denoted by $\text{ACKN}_{15}$
in Section \ref{subsec:Free-arrangements}, which is a free but non
recursively free arrangement. 
\end{rem}

\begin{rem}
\label{rem:Cremona}One may wonder if there exists a Cremona transformation
$\tau$ sending the six lines of an unassuming arrangement $\cC_{0}$
to $\cC_{1}=\ldt(\co)$. We checked that the images of $\co$ by the
Cremona standard involutions based at three of the double points of
$\co$ give either arrangements with less lines or unassuming arrangements
projectively equivalent to $\co$. By the Noether-Castelnuovo theorem,
the Cremona group is generated by standard involution and $PGL_{3}(\CC)$:
the existence of such a $\tau$ seems therefore unlikely.%
\begin{comment}
In Section \ref{subsec:The-dual-operators}, we defined the operators
$\cD_{\mathfrak{m},\mathfrak{n}}$. Let $\co$ be an unassuming arrangement,
the arrangement $\check{\co}=\cL_{\{2\}}(\cD(\co))$ has singularities
$t_{2}=27,t_{3}=t_{5}=6$ and we have the relations 
\[
\L_{\{2\},\{5\}}(\co)=\co,\,\,\L_{\{2\},\{3\}}(\co)=\cC_{1},
\]
and naturally associated to $\co$, there is the arrangement of $27$
lines
\[
\cA_{0}=\L_{\{2\},\{2\}}(\co)
\]
It is such that $\L_{2}(\co)=\co\cup\cC_{1}\cup\cA_{0}$. The singularities
of $\cA_{0}$ are $t_{2}=171,t_{3}=36,t_{4}=12$. One can define a
sequence $(\cA_{k})_{k}$ by $\cA_{k}=\L_{\{2\},\{2\}}(\cC_{k}),$
where $\cC_{k}$ is the $\L_{\{2\},\{3\}}$-sequence associated to
$\co$. Each of the arrangements $\cA_{k}$ contains the same three
lines. Removing these three lines, we obtain sequence of line arrangements
$\cB_{k}$ with $24$ lines and $t_{2}=204,t_{4}=12$. It seems difficult
to have a geometric understanding of these line arrangements, how
they are related. For example, there is no line operator sending $\cA_{0}$
to $\cA_{1}$, and for generic unassuming arrangement, they are not
projectively equivalent. 
\end{comment}
\end{rem}

\begin{rem}
One can compute that the set $\cS$ of irreducible conics containing
at least six of the $15$ double points $\cP_{\{2\}}(\co)$ of $\cC_{0}$
is the union of $12$ conics. Each conic $C$ of $\cS$ contains exactly
$6$ double points and the six lines of $\co$ form an hexagon for
$C$. The Pascal line of that hexagon i.e. the line containing the
three double points of the three pairs of opposite sides, is a line
of $\cC_{1}=\ldt(\cC_{0})$. Each line of $\cC_{1}$ is the Pascal
line of two conics of $\cS$, see Figure \ref{fig:The-dual-configuration-1}.
\end{rem}

\begin{figure}[h]
%\begin{figure}
\begin{center}

\begin{tikzpicture}[scale=0.75]

\clip(-3.542962962962968,-2.029629629629638) rectangle (5.6837037037037135,2.0237037037037043);
\draw [line width=0.3mm,domain=-3.542962962962968:5.6837037037037135] plot(\x,{(-0.-0.4*\x)/1.});
\draw [line width=0.3mm] (1.,-2.829629629629638) -- (1.,2.8237037037037043);
\draw [line width=0.3mm] (-1.,-2.829629629629638) -- (-1.,2.8237037037037043);
\draw [line width=0.3mm,domain=-3.542962962962968:5.6837037037037135] plot(\x,{(-0.--0.4*\x)/1.});
\draw [line width=0.3mm,domain=-3.542962962962968:5.6837037037037135] plot(\x,{(-1.-0.*\x)/1.});
\draw [line width=0.3mm,domain=-3.542962962962968:5.6837037037037135] plot(\x,{(-1.-0.*\x)/-1.});
\draw [line width=0.3mm,color=blue] (-2.5,-2.829629629629638) -- (-2.5,2.8237037037037043);
\draw [rotate around={0.:(1.75,0.)},line width=0.3mm,color=green] (1.75,0.) ellipse (2.9825883613622133cm and 1.0331989159885913cm);
\draw [samples=50,domain=-0.99:0.99,rotate around={90.:(0.75,0.)},xshift=0.75cm,yshift=0.cm,line width=0.3mm,color=green] plot ({0.3774917217635375*(1+(\x)^2)/(1-(\x)^2)},{0.7133922984085065*2*(\x)/(1-(\x)^2)});
\draw [samples=50,domain=-0.99:0.99,rotate around={90.:(0.75,0.)},xshift=0.75cm,yshift=0.cm,line width=0.3mm,color=green] plot ({0.3774917217635375*(-1-(\x)^2)/(1-(\x)^2)},{0.7133922984085065*(-2)*(\x)/(1-(\x)^2)});
\end{tikzpicture}

\end{center} 
%\end{figure}

\caption{\label{fig:The-dual-configuration-1}The two conics with the same
Pascal line for $\protect\co$}
\end{figure}

\subsection{About the divergence of $\protect\ldt$}

Let $\cC_{0}$ be the arrangement of six lines with normals that are
columns of matrix $M_{t}$ in \eqref{eq:MatPov} for $t=\pm2\pm\sqrt{5}\in\PP^{1}\setminus U$.
The following table gives the number of lines and singularities of
the first terms of the associated $\ldt$-sequence:

\begin{tabular}{|c|c|c|c|c|c|}
\hline 
 & $|\cC|$ & $t_{2}$ & $t_{3}$ & $t_{4}$ & $t_{5}$\tabularnewline
\hline 
$\cC_{0}$ & $6$ & $15$ &  &  & \tabularnewline
\hline 
$\cC_{1}$ & $10$ & $45$ &  &  & \tabularnewline
\hline 
$\cC_{2}$ & $90$ & $1710$ & $120$ & $210$ & $45$\tabularnewline
\hline 
\end{tabular}

We conjecture that the number of lines of $\cC_{k}$ tends to infinity
with $k$. The line arrangement $\cD_{\{2\}}(\co)$ is the simplicial
line arrangement $A(15,120)$.

\section{\label{sec:Flashing-arrangements-for}Flashing arrangements for $\protect\L_{\{2\},\{k\}}$,
$k\in\{4,5,6,7\}$}

\subsection{A generalization of the $\protect\ldt$- flashing arrangements}

For $n\geq4$, let us construct line arrangements that have the same
properties for $\L_{\{2\},\{n\}}$ as the flashing arrangement for
$\ldt$ in Section \ref{Sec:On-the-L=00007B2=00007D=00007B3=00007D-operator}.
We thus want to construct a line arrangement $L$ of $3n$ lines $\ell_{1},\dots,\ell_{3n}$
such that the line arrangement $L_{2}=\{\ell_{n+1},\dots,\ell_{2n}\}$
has a unique singular point ($t_{n}(L_{2})=1$), the lines arrangements
$L_{1}=\{\ell_{1},\dots,\ell_{n}\}$ and $L_{3}=\{\ell_{2n+1},\dots,\ell_{3n}\}$
have only nodal singularities ($t_{2}(L_{k})=\tfrac{1}{2}n(n-1)$
for $k=1,3$), the line arrangement $\cC_{0}=L_{1}\cup L_{2}$ and
$\cC_{1}=L_{2}\cup L_{3}$ have nodes and one $n$-point ($t_{2}(\cC_{j})=\tfrac{1}{2}(3n^{2}-n),\,t_{n}(\cC_{j})=1$
for $j=1,2$), moreover the line arrangement $L=L_{1}\cup L_{2}\cup L_{3}$
has singularities 
\[
t_{2}=n^{2}-n,\,t_{3}=n^{2},\,t_{n}=1.
\]
We require moreover that each line $\ell_{k}$ contains $n$ triple
points of $L$. Then, up to permutation of the lines in each of the
arrangements $L_{1},L_{2},L_{3}$ and up to relabeling the triple
points, one may suppose that the incidence matrix of the $3n$ lines
and $n^{2}$ triple points is 
\[
\left(\begin{array}{ccc}
C_{1} & I_{n} & I_{n}\\
C_{2} & I_{n} & G_{1}\\
\vdots & \vdots & \vdots\\
C_{n} & I_{n} & G_{n-1}
\end{array}\right)
\]
where $C_{k}$ is the $n\times n$ matrix having $1$ in all the entries
of $k$-th column and $0$ in all other entries, $I_{n}$ is the size
$n$ identity matrix and $G_{1},\dots,G_{n}$ are $n\times n$ matrices.
Since the lines of that incidence matrix represent the triple points,
each line of these matrices $G_{k}$ must contain a unique $1$, moreover
since the lines are distinct, the matrix $G_{1}$ cannot have a $1$
on its diagonal. Similarly, once $G_{1}$ is fixed, there are new
restrictions on $G_{2}$ etc, so that the matrix 
\[
I_{n}+\sum_{k=1}^{n-1}G_{k}
\]
 is the matrix with 1 in each entries. We complete the above incidence
matrix by adding as a line the size $1\times3n$ matrix $(0,\dots,0,1\dots,1,0\dots,0)$,
which represents the $n$-point.

Let $G$ be the size $n\times n$ matrix such that $G(e_{i})=e_{i+1}$,
where the indices are taken modulo $n$ and $(e_{i})_{i}$ is the
canonical basis. The matrix $G$ has order $n$; for $G_{1},\dots,G_{n-1}$,
let us take $G_{k}=G^{k}$. We checked that for $n\in\{3,4,5,6,7\}$,
the matroid associated to the above incidence matrix is representable
in characteristic $0$, in other words, there exist $3n$ lines in
$\PP^{2}(\CC)$ that have that configuration, see Figure \ref{fig:The-18-lines-flash}
for the case $n=6$. 

In each case, the moduli space of such lines arrangement is one dimensional.
If $n\in\{3,4,6,7\}$, that moduli space is an open subset of $\PP^{1}$,
in particular it is irreducible; if $n=5$, the moduli space is the
union of two disjoint irreducible rational components defined over
$\QQ(\sqrt{5})$. For any $n\in\{3,4,5,6,7\}$, these line arrangements
are made so that 
\[
\L_{\{2\},\{n\}}(\cC_{j})=\cC_{k},
\]
where $\{j,k\}=\{0,1\}$, indeed: the $n^{2}$ double points which
are the intersection points of the lines in $L_{1}$ and the lines
in $L_{2}$ are also the $n^{2}$ double points which are the intersection
of the lines in $L_{2}$ and the lines in $L_{3}$. The lines that
are common to $\cC_{0}$ and $\cC_{1}$ are the lines of $L_{2}$.
Each line in $L_{1}$ possess $2n-1$ double points in $\cC_{0}$,
thus they cannot be in $\cC_{1}$, whereas $L_{3}\subset\L_{\{2\},\{n\}}(\cC_{0})$.
In the next section, we describe with more details the case $k=4$.

\begin{figure}[h]
\begin{center}
\begin{tikzpicture}[scale=0.25]

\clip(-15.611921156381017,-8.671198543004897) rectangle (15.620855067047883,16.50871407123237);
%\draw[line width=4.pt] (-13.771850619186756,13.700185356567443) -- (-4.0872688444801195,13.700185356567443);
\draw [line width=0.2mm,color=blue] (0.,-8.671198543004897) -- (0.,16.50871407123237);
\draw [line width=0.2mm,color=blue] (1.5,-8.671198543004897) -- (1.5,16.50871407123237);
\draw [line width=0.2mm,color=blue,domain=-15.611921156381017:15.620855067047883] plot(\x,{(-0.--2.*\x)/1.});
\draw [line width=0.2mm,color=blue,domain=-15.611921156381017:15.620855067047883] plot(\x,{(--3.--1.*\x)/1.});
\draw [line width=0.2mm,color=blue,domain=-15.611921156381017:15.620855067047883] plot(\x,{(--4.5--1.*\x)/1.});
\draw [line width=0.2mm,color=blue,domain=-15.611921156381017:15.620855067047883] plot(\x,{(--6.--2.*\x)/1.});
\draw [line width=0.2mm,domain=-15.611921156381017:15.620855067047883] plot(\x,{(--5.142857142857142-0.*\x)/1.});
\draw [line width=0.2mm,domain=-15.611921156381017:15.620855067047883] plot(\x,{(--7.-0.*\x)/1.});
\draw [line width=0.2mm,domain=-15.611921156381017:15.620855067047883] plot(\x,{(--13.5-0.*\x)/1.});
\draw [line width=0.2mm,domain=-15.611921156381017:15.620855067047883] plot(\x,{(-6.-0.*\x)/1.});
\draw [line width=0.2mm,domain=-15.611921156381017:15.620855067047883] plot(\x,{(--1.8-0.*\x)/1.});
\draw [line width=0.2mm,domain=-15.611921156381017:15.620855067047883] plot(\x,{(--3.75-0.*\x)/1.});
\draw [line width=0.2mm,color=red,domain=-15.611921156381017:15.620855067047883] plot(\x,{(--5.142857142857142--1.2380952380952381*\x)/1.});
\draw [line width=0.2mm,color=red,domain=-15.611921156381017:15.620855067047883] plot(\x,{(--7.--4.333333333333331*\x)/1.});
\draw [line width=0.2mm,color=red,domain=-15.611921156381017:15.620855067047883] plot(\x,{(--13.5-13.*\x)/1.});
\draw [line width=0.2mm,color=red,domain=-15.611921156381017:15.620855067047883] plot(\x,{(-6.--5.2*\x)/1.});
\draw [line width=0.2mm,color=red,domain=-15.611921156381017:15.620855067047883] plot(\x,{(--1.8--1.3*\x)/1.});
\draw [line width=0.2mm,color=red,domain=-15.611921156381017:15.620855067047883] plot(\x,{(--3.75--0.9285714285714286*\x)/1.});
\begin{scriptsize}
%\draw [fill=black] (-8.251639007603973,13.700185356567443) circle (2.5pt);
%\draw[color=black] (-7.767409918868642,14.838123715095476) node {$t = 0.7$};
\end{scriptsize}

\end{tikzpicture}
\end{center} 

\caption{\label{fig:The-18-lines-flash}The 18 lines of $\protect\cA_{1}$
for the flash arrangement of $\protect\L_{\{2\},\{6\}}$}
\end{figure}

\subsection{\label{subsec:A-flashing-16-lines}A flashing configuration of $8$
lines for $\protect\L_{\{2\},\{4\}}$}

For a parameter $t$, consider the following configuration of $12$
lines $\ell_{1},\dots,\ell_{12}$ whose normals are the columns of
the following matrix
\[
\left(\begin{array}{cccccccccccc}
1 & 0 & t^{2}-\tfrac{1}{2}t & 2t^{2} & 1 & 0 & t & 2 & 0 & 2t^{2}-t & t & 1\\
0 & 0 & t^{2}-t+\tfrac{1}{2} & 2t^{2}-2t+1 & 1 & 1 & t-1 & 2t-1 & 1 & 2t^{2}-3t+1 & t-\tfrac{1}{2} & 1\\
0 & 1 & t^{2}-\tfrac{1}{2}t & t^{2} & 1 & t & 0 & t & 0 & t^{2}-t & t-\tfrac{1}{2} & t
\end{array}\right).
\]
For $t\notin\cS=\{\pm1,0,\frac{1}{2},2,\tfrac{1}{2}(1\pm i)\}$, the
image by $\Lambda_{\{2\},\{4\}}$ of the union $\co=\co(t)$ of the
first eight lines is the configuration $\cC_{1}=\{\ell_{5},\dots,\ell_{12}\}$,
moreover $\L_{\{2\},\{4\}}(\cC_{1})=\co$. The line arrangements $\co$
and $\cC_{1}$ are projectively equivalent.  

The $12$ lines and $16$ triple points of $\cA_{1}$ form a $(16_{3},12_{4})$-configuration.
The arrangement $\cL_{\{3\}}(\cD(\cA_{1}))$ has $16$ lines with
$t_{2}=48,t_{4}=12$ and form a $(12_{4},16_{3})$-configuration. 
\begin{rem}
The Reye configuration is a well-known $(12_{4},16_{3})$-configuration
(see e.g. \cite{Dolgachev}), however we checked that the matroids
associated to the Reye configuration and to our $(12_{4},16_{3})$-configuration
are not isomorphic. 
\end{rem}

For $t=\tfrac{1}{2}(1\pm i)$, one has that $\cC_{1}=\L_{\{2\},\{4\}}(\cC_{0})$
is the $12$ line arrangement $\{\ell_{1},\dots,\ell_{12}\}$, and
$\L_{\{2\},\{4\}}(\cC_{1})=\emptyset$. Remarkably, for that value
of $t,$ the line arrangement $\cC_{1}$ is the Ceva(4) line arrangement.

The dual arrangements $\check{\cC_{k}}=\cD_{\{2\}}(\check{\cC_{k}}),\,k=1,2$
have $22$ lines moreover $\L_{\{4\},\{2\}}(\check{\cC_{0}})=\check{\cC_{1}},$
$\L_{\{4\},\{2\}}(\check{\cC_{1}})=\check{\cC_{0}}$; their singularities
are $t_{2}=99,t_{4}=8,t_{7}=4.$ The arrangements $\check{\cC_{0}},\check{\cC_{1}}$
are not free. The union $\check{\cC_{0}}\cup\check{\cC_{1}}$ has
$28$ lines and singularities $t_{2}=180,\,t_{4}=5,\,t_{7}=8.$

\section{\label{sec:Matroids-and-line}Matroids and line operators}

A matroid is a pair $(E,\cB)$, where $E$ is a finite set and the
elements of $\cB$ are subsets of $E$ (called \textit{basis),} subject
to the following properties:\\
$\bullet$ $\cB$ is non-empty\\
$\bullet$ if $A\in\cB$ and $B\in\cB$ are distinct basis and $a\in A\setminus B$,
then there exists $b\in B\setminus A$ such that $(A\setminus\{a\})\cup\{b\}\in\cB$. 

The basis have the same order $n$, called the \textit{rank} of $(E,\cB)$.
Order $n$ subsets of $E$ that are not basis are called \textit{non-basis}.
We identify $E$ with $\{1,\dots,m\}$. A realization (over some field)
of the matroid $(E,\cB)$ is a the data of a size $n\times m$ matrix
$M$, with columns $C_{1},\dots,C_{m}$, such that an order $n$ subset
$\{i_{1},\dots,i_{n}\}$ of $E$ is a non-base if and only if the
size $n$ minor $|C_{i_{1}},\dots,C_{i_{n}}|$ is zero. 

Since we are mainly working on the projective plane, we are interested
by rank $3$ matroid. A realization of a matroid is then a labelled
line arrangement $\cC=\{L_{1},\dots,L_{m}\}$ in $\PP^{2}$ such that
any set of three lines $L_{i},L_{j},L_{k}$ of $\cC$ meet at one
point if and only if $i,j,k$ is a non-basis of $(E,\cB)$ . 

Conversely, any labelled line arrangement $\cC=\{L_{1},\dots,L_{m}\}$
defines a rank $3$ matroid $M(\cC)=(\{1,\dots,m\},\cB)$, where the
non-base are any triple $\{i,j,k\}$ such that the intersection of
$L_{i},L_{j},L_{k}$ is non-empty. 

Given a matroid $(E,\cB)$, one may compute the moduli space $\cM_{\cB}$
of realizations (see \cite{RoulleauDy} for an example of such computation). 

Examples in Section \ref{subsec:Zariski-pair-arrangements} shows
that the moduli space $\cM_{\cB}$ may have several connected components,
and the action of some line operators $\L$ on some elements of these
components may be different: the resulting line arrangements do not
define the same matroids. These examples even show that inside the
same irreducible components, two distinct realization may have images
by $\L$ defining distinct matroids. 

The example of unassuming arrangements shows that even for the trivial
matroid (no non-bases) over $E=\{1,\dots,6\}$, the line operators
may have different behaviors.

\vspace{3mm}

\noindent Xavier Roulleau\\
Université d'Angers, \\
CNRS, LAREMA, SFR MATHSTIC, \\
F-49000 Angers, France 

\noindent xavier.roulleau@univ-angers.fr
\end{document}